\documentclass{article}
\usepackage{graphicx,amsmath, amsfonts, amssymb,enumitem,amsthm} 
\usepackage[colorlinks,citecolor=blue,urlcolor=blue]{hyperref}
\usepackage{color}

\newcommand\Ex{{\mathbb E}}

\newcommand\Prob{{\mathbb P}}

\newcommand\Normal{{\mathcal N}}

\newcommand\Q{{\mathcal Q}}

\newcommand\R{{\mathbb R}}

\newcommand\dto{\overset{d}{\to }}
\newcommand\wto{\overset{w}{\to }}

\newcommand\tran{\top}

\newcommand\Pto{\overset{\Prob}{\to}}
\newcommand\PTto{\overset{\Prob_T}{\to}}

\newcommand\bra[1]{\langle #1 \rangle}

\DeclareMathOperator{\diag}{diag}

\newtheorem{theorem}{Theorem}[section]
\newtheorem{corollary}[theorem]{Corollary}
\newtheorem{lemma}[theorem]{Lemma}

\theoremstyle{definition}
\newtheorem{definition}[theorem]{Definition}

\theoremstyle{remark}
\newtheorem{remark}[theorem]{Remark}
\newtheorem{example}[theorem]{Example}

\newsavebox{\toy}
\savebox{\toy}{\framebox[0.65em]{\rule{0cm}{1ex}}}

\title{Beta Ensembles in the Freezing Regime and Finite Free Convolutions}

\author{
Fumihiko Nakano\footnote{Mathematical Institute, Tohoku University, Sendai,  Japan.
\newline Email: fumihiko.nakano.e4@tohoku.ac.jp}
\and
Khanh Duy Trinh\footnote{Global Center for Science and Engineering, Waseda University, Japan.
\newline
Email: trinh@aoni.waseda.jp }
\and 
Ziteng Wang \footnote{Graduate School of Fundamental Science and Engineering, Waseda University, Japan.
\newline
Email: ziteng@fuji.waseda.jp }
}

\begin{document}

\maketitle

\begin{abstract}
In the freezing regime where the system size $N$ is fixed and the inverse temperature $\beta$ tends to infinity, the eigenvalues of Gaussian beta ensembles converge to zeros of the $N$th Hermite polynomial. That law of large numbers has been proved by analyzing the joint density or reading off the random matrix model. This paper studies its dynamical version. We show that in the freezing regime, the eigenvalue processes called beta Dyson's Brownian motions converge to deterministic limiting processes which can be written as the finite free convolution of the initial data and zeros of Hermite polynomials. This result is a counterpart of those in the random matrix regime (when $N$ tends to infinity and the parameter $\beta$ is fixed) and the high temperature regime (when $N$ tends to infinity and $\beta N $ stays bounded). We also establish Gaussian fluctuations around the limit and deal with the Laguerre case.

\medskip
\noindent{\bf Keywords:} Gaussian beta ensembles ; beta Dyson's Brownian motions ; Hermite polynomials ; finite free convolution ; beta Laguerre ensembles ; beta Laguerre processes ; freezing regime
		
\medskip
	
\noindent{\bf AMS Subject Classification: } Primary 60B20 ; Secondary 60H05
%
%
%
%
\end{abstract}

\section{Introduction}

Gaussian beta ensembles are a family of joint densities of the form 
\begin{equation}\label{GbE}
\frac{1}{Z_{N,\beta}}
    \prod_{1 \le i < j \le N} |\lambda_j - \lambda_i|^{\beta}
    \prod_{l=1}^N e^{-\beta\lambda_l^2/4},
    \qquad (\lambda_1 \le \cdots \le \lambda_N),
\end{equation}
where $N$ is the system size, $\beta > 0$ is the inverse temperature parameter and $Z_{N,\beta}$ is the normalizing constant. These densities generalize the joint density of eigenvalues of Gaussian Orthogonal/Unitary/Symplectic Ensembles (GOE, GUE and GSE). Based on the idea of tridiagonalizing Gaussian matrices of GOE or GUE, a tridiagonal random matrix model was  introduced in \cite{DE02}. Denote by 
\begin{align*}
	H_{N, \beta} 
	\sim
	\frac{1}{\sqrt{\beta}}\begin{pmatrix}
		\Normal(0,2)	&\chi_{(N-1)\beta}	\\
		\chi_{(N-1)\beta}	&\Normal(0,2)		&\chi_{(N-2)\beta}\\
		&	\ddots	&\ddots	&\ddots\\
			&&\chi_{\beta}	&\Normal(0,2)
	\end{pmatrix}
\end{align*}
the symmetric tridiagonal matrix consisting of independent entries, where the diagonal is an i.i.d. (independent identically distributed) sequence of random variables of the Gaussian distribution $\Normal(0,2)$, and the off diagonal $H_{N, \beta}(i, i+1)$ follows the chi distribution $\chi_{(N-i)\beta}$ with $(N-i)\beta$ degrees of freedom. Then the eigenvalues $\lambda_1 < \dots < \lambda_N$ of $H_{N, \beta}$ are distributed according to the Gaussian beta ensemble~\eqref{GbE}. Spectral properties of Gaussian beta ensembles in general, and  Wigner's semi-circle law and Gaussian fluctuations around the limit in particular, have been studied by analyzing the joint density or reading off the random matrix model \cite{DE06, Johansson98}.

Stochastic analysis has also been used to study Gaussian beta ensembles \cite[\S 4.3]{Anderson-book}. The objects are beta Dyson's Brownian motions, the strong solution of the following system of stochastic differential equations (SDEs)
\begin{equation}\label{eq:DBM}
d\lambda_i(t) = \sqrt{\frac{2}{\beta}}\, db_i(t)
+ \sum_{j:j \ne i} \frac{1}{\lambda_i(t) - \lambda_j(t)}\, dt, \quad \lambda_i(0) = a_i,
\quad (i = 1, \ldots, N),
\end{equation}
together with the constraint that $\lambda_1(t) \le \lambda_2(t) \le \cdots \le \lambda_N(t)$, almost surely. 
Here $\{ b_i(t) \}_{i=1}^N$ are independent standard Brownian motions.
The SDEs~\eqref{eq:DBM} have a unique strong solution \cite{Cepa-Lepingle-1997}. For $\beta \ge 1$, with probability one, the eigenvalue processes are non-colliding at any time $t > 0$ \cite{Graczyk-Malecki-2014}. Under the zero initial condition, that is, $\lambda_i(0) = 0, i = 1, \dots, N$, the joint distribution of $\{\lambda_i(t)/\sqrt t \}_{i=1}^N$ coincides with the Gaussian beta ensemble~\eqref{GbE}. A dynamical version of Wigner's semi-circle law and Gaussian fluctuations at the process level have been studied.

Now let us get into detail by introducing the limiting behavior of the empirical distributions. With two parameters, the system size $N$ and the inverse temperature $\beta$, three different regimes have been considered. For fixed $\beta > 0$, the empirical distribution 
\[
	L_{N, \beta} = \frac1N \sum_{i=1}^N \delta_{\lambda_i/\sqrt N}
\]
converges weakly to the standard semi-circle distribution, almost surely. Here $\delta_\lambda$ denotes the Dirac measure. This is Wigner's semi-circle law, which holds as long as $N \to \infty$ with $\beta N \to \infty$. We call it the random matrix regime. When $N \to \infty$ but $\beta N$ stays bounded, we are in a high temperature regime where the empirical distributions converge to a Gaussian-like distribution. The third regime, called the freezing regime, is when $N$ is fixed and $\beta$ tends to infinity. In this case, the eigenvalues converge to zeros of Hermite polynomials. The freezing regime is an intermediate step to investigate the random matrix regime \cite{DE06} (by letting $\beta \to \infty$ first, then letting $N \to \infty$). It was also used to identify the limiting measure in a high temperature regime by duality \cite{DS15}.

At the process level, the empirical measure process
\[
	\frac 1N \sum_{i = 1}^N \delta_{\lambda_i(t)/\sqrt N }
\] 
converges to a deterministic probability measure-valued process. In the random matrix regime, the limiting measure process is expressed as the free convolution of the initial measure and a semi-circle distribution \cite{Voit-Woerner-2022b}. Analogous results hold in the high temperature regime using the $c$-convolution defined in terms of the Markov--Krein transform \cite{NTT-2025}. The main goal of this paper is to express the limiting processes in the freezing regime as the finite free convolution of the initial data and zeros of Hermite polynomials. Besides, we also establish interesting limit theorems on the limiting behavior of the moment processes which are viewed as duality results to those in the high temperature regime.

Now we focus on the freezing regime. Letting $\beta \to \infty$, the random matrix $H_{N, \beta}$ converges in probability to a deterministic matrix 
\begin{equation}\label{Gauss-JN}
	J_{N} 
	=
	\begin{pmatrix}
		0	&\sqrt {N-1}	\\
		\sqrt {N-1}	&0	&\sqrt {N-2}\\
		&	\ddots	&\ddots	&\ddots\\
			&&\sqrt 1	&0
	\end{pmatrix}.
\end{equation}
Since the characteristic polynomial $\det(x - J_N)$ is nothing but the $N$th probabilist's Hermite polynomial $H_N(x)$, the eigenvalues of $J_N$ are the zeros $z_{1, N}^{(H)} < \dots < z_{N, N}^{(H)}$ of $H_N(x)$. Thus, in the freezing regime, by the continuity of roots of polynomials, 
\[
	\lambda_i \Pto z_{i, N}^{(H)}, \quad i = 1, \dots, N.
\]
Here `$\Pto$' denotes the convergence in probability.
Gaussian fluctuations have also been studied \cite{Andraus-HV-2021, Andraus-Voit-2019, DE05, Voit-2019}.

At the process level, formally, when the initial data $a_1 \le \cdots \le a_N$ are fixed, as $\beta \to \infty$, the eigenvalue processes $\lambda_1(t), \dots, \lambda_N(t)$ converge to deterministic processes $y_1(t) \le  \dots \le y_N(t)$ satisfying the following ordinary differential equations (ODEs)
\begin{equation}\label{ODEs-yi}
\left\{
\begin{aligned}
	y_i'(t) &= \sum_{j:j \ne i} \frac{1}{y_i(t) - y_j(t)}, \\
	y_i(0) &= a_i,
\end{aligned}
\right.
\quad i = 1, \dots, N.
\end{equation}
The existence and uniqueness of the solution have been shown in \cite{Voit-Woerner-2022}. Under the zero initial condition, the unique solution is given by
\[
	(y_1(t), \dots, y_N(t)) = \sqrt t (z_{1, N}^{(H)}, \dots, z_{N, N}^{(H)}).
\]
Our main result here is a type of Law of Large Numbers (LLN).

\begin{theorem}\label{thm:main-intro}
Let $\lambda_1^{(\beta)}(t) \le \cdots \le \lambda_N^{(\beta)}(t)$ be the unique strong solution to the system of SDEs~\eqref{eq:DBM}. (Here we have added the superscript ${}^{(\beta)}$ to indicate the dependence on $\beta$.) Then as $\beta \to \infty$, 
\[
	(\lambda_1^{(\beta)}(t), \dots, \lambda_N^{(\beta)}(t)) \Pto (a_1, \dots, a_N) \boxplus_N \sqrt t(z_{1, N}^{(H)}, \dots, z_{N, N}^{(H)}),
\]
uniformly for $t \in [0, T]$, where $\boxplus_N$ denotes the finite free convolution (see Definition~{\rm\ref{defn:FFC}}).
\end{theorem}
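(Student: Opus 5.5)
Our plan is to work with the power sums (moment processes) rather than with the individual eigenvalues, because the interaction terms in \eqref{eq:DBM} cancel nicely at the level of symmetric functions. For $k \ge 1$ set $p_k^{(\beta)}(t) = \sum_{i=1}^N \lambda_i^{(\beta)}(t)^k$. Applying It\^o's formula to \eqref{eq:DBM} and symmetrizing the drift with
\[
\sum_{i \ne j} \frac{\lambda_i^k}{\lambda_i - \lambda_j} = \frac12 \sum_{i\ne j}\frac{\lambda_i^k-\lambda_j^k}{\lambda_i-\lambda_j} = \frac12\sum_{m=0}^{k-1}\big(p_m p_{k-1-m} - p_{k-1}\big),
\]
we obtain
\[
dp_k = k\sqrt{\tfrac{2}{\beta}}\sum_i \lambda_i^{k-1} db_i + \Big[\tfrac{k}{2}\sum_{m=0}^{k-1}\big(p_m p_{k-1-m}-p_{k-1}\big) + \tfrac{k(k-1)}{\beta}p_{k-2}\Big]dt .
\]
This is an identity for $\beta$ large. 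Non-collision for $\beta \ge 1$ makes the singular terms harmless, since after symmetrization everything is polynomial.

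The limiting object is identified first. Let $q_k(t)$ denote the power sums of $(y_1(t),\dots,y_N(t))$. They solve the closed triangular polynomial ODE system obtained by deleting the martingale term and the $1/\beta$ term. Given $q_0=N$ and $q_1,\dots,q_{k-1}$, the equation for $q_k$ is linear, so the system has a unique polynomial-in-$t$ solution for $k=1,\dots,N$. We would then check that the power sums of $(a_i)\boxplus_N \sqrt t (z^{(H)}_{i,N})$ satisfy the same ODE system with the same initial data. The plan for this check is to use Definition~\ref{defn:FFC} in its derivative form: $p_{a\boxplus b}(x) = \frac1{N!}\sum_j \partial^j p_a(x)\,\cdot(\text{coefficient of } \partial^j \text{ in } b)$. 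Convolving with $\sqrt t\cdot$Hermite zeros corresponds to $e^{-\frac t2\partial_x^2}$ applied to $P_a(x)=\prod(x-a_i)$. Hence $P(t,x)=e^{-\frac t2 \partial^2}P_a$ solves the backward heat equation $\partial_t P = -\frac12 \partial_x^2 P$. It is classical, and we would verify it directly, that the roots of a solution of $\partial_tP=-\frac12P''$ satisfy exactly $y_i'=\sum_{j\ne i}1/(y_i-y_j)$, because $P''(y_i)/P'(y_i)=2\sum_{j\ne i}1/(y_i-y_j)$. The roots stay real and simple for $t>0$, since finite free convolution preserves real-rootedness. Equivalently we can compare power sums. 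Either way, the uniqueness result of \cite{Voit-Woerner-2022} identifies $y(t)$ with the finite free convolution.

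For convergence, we prove $\sup_{t\le T}|p_k^{(\beta)}(t)-q_k(t)|\Pto 0$ for $k=1,\dots,N$ by induction on $k$. The drift difference is a polynomial in lower $p_m$ minus the same polynomial in lower $q_m$, plus $O(1/\beta)p_{k-2}$. The martingale has quadratic variation $\frac{2k^2}{\beta}\int_0^t p_{2k-2}\,ds$. Since $2k-2$ may exceed $k$, we need an a priori bound $\Ex\sup_{t\le T} p_{2m}^{(\beta)}(t) \le C$ uniformly in $\beta\ge1$. We get it from the It\^o equation for even $p_{2m}$: its drift is a polynomial with positive structure controlled by $p_{2m}$ itself via H\"older/Gronwall, and Doob's inequality then gives the sup bound. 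Given this bound, the martingale term tends to $0$ in $L^2$ uniformly on $[0,T]$. The induction hypothesis plus a stopping/tightness argument makes the drift difference small, which gives uniform convergence in probability of $p_1,\dots,p_N$.

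Finally, the map from $(p_1,\dots,p_N)$ to the ordered roots (Newton identities, then the coefficients, then the ordered roots) is continuous. It is therefore uniformly continuous on the compact set where the curve $q(t)$ lives, enlarged by a small neighborhood. This transfers the uniform convergence of the power sums to the uniform convergence of the ordered eigenvalues. The main obstacle we expect is the uniform-in-$\beta$ moment bound and the handling of the nonlinear drift in the induction; we would localize with stopping times $\tau_R=\inf\{t: \max_i|\lambda_i|\ge R\}$ to make the polynomials Lipschitz.
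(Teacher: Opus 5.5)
Your outline is sound. It differs from the paper in the choice of symmetric functions, and that choice matters.

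\textbf{What the paper does.} The paper applies It\^o's formula to the elementary symmetric polynomials $e_k^{(\beta)}(t)$ rather than to power sums. Since $\partial_i^2 e_k=0$, the $1/\beta$ It\^o correction vanishes identically. The identity $\sum_i\sum_{j\ne i}\partial_ie_k/(x_i-x_j)=-\frac{(N-k+1)(N-k+2)}{2}e_{k-2}$ then makes the drift \emph{linear}: $de_k^{(\beta)}=dK_k^{(\beta)}-\frac{(N-k+1)(N-k+2)}{2}e_{k-2}^{(\beta)}dt$. So the induction only needs linear combinations and time integrals of processes converging uniformly in probability. The limiting system $g_k'=-\frac{(N-k+1)(N-k+2)}{2}g_{k-2}$ is exactly the backward heat equation $\partial_tP=-\frac12\partial_x^2P$ written on the coefficients of $P(t,x)=\sum_k(-1)^kg_k(t)x^{N-k}$. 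The identification with $e^{-\frac t2\partial_x^2}P_a$, i.e.\ with $P_a\boxplus_N H_{N,\sqrt t}$, is therefore a direct coefficient check (Lemma~\ref{lem:finite-convolution}, or the finite free Fourier transform argument in the appendix). It needs neither simple roots, nor the root ODE \eqref{ODEs-yi}, nor the uniqueness theorem of \cite{Voit-Woerner-2022}.

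\textbf{What your route trades.} Your power sums amount to the moment method of Section~\ref{sect:EMP}, extended to general initial data. You pay with a quadratic drift. This is harmless: by Lemma~\ref{2.2}(ii), products of processes converging uniformly in probability to deterministic continuous limits converge in the same sense, so your stopping times $\tau_R$ are unnecessary. You also pay with a detour through the roots in the identification step. In exchange you get the by-product that the limit solves \eqref{ODEs-yi}. The uniform moment bound for the martingale terms and the final continuity-of-roots step are common to both routes.

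\textbf{Two points to repair.}

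First, your drift is off by one index. Since $\partial_{\lambda_i}p_k=k\lambda_i^{k-1}$, the symmetrization must be applied to $\lambda_i^{k-1}$, which gives $\frac k2\sum_{m=0}^{k-2}(p_mp_{k-2-m}-p_{k-2})$. As written, $p_1$ would acquire drift $\frac{N(N-1)}2$, whereas in fact it has none. The triangular structure survives the correction.

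Second, preservation of real-rootedness under $\boxplus_N$ does not imply that the roots are simple for $t>0$, and your root-ODE argument needs simplicity. The easiest fix avoids roots altogether. Either compare coefficients as the paper does, or use the following. If $\partial_tP=-\frac12\partial_x^2P$, then $G=\partial_xP/P=\sum_{k\ge0}q_kx^{-k-1}$ satisfies, as a formal Laurent series in $x^{-1}$, $\partial_tG=-\frac12\partial_x^2G-G\,\partial_xG$. Comparing coefficients of $x^{-k-1}$ gives exactly the corrected limiting power-sum system, with no simplicity assumption.
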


In the high temperature regime, the limiting behavior of the empirical measure process has been studied by a moment method \cite{NTT-2023, NTT-2025}. Gaussian fluctuations, or Central Limit Theorems (CLTs) involving orthogonal polynomials were established. In this work, we also use that moment method to deduce a new yet equivalent form of CLTs.

The paper is organized as follows. We briefly introduce the finite free convolution in the next section. The proof of Theorem~\ref{thm:main-intro} is then given in Sect.~\ref{sect:LLN}. Section~\ref{sect:EMP} establishes the LLN and the CLT for the empirical measure processes by using a moment method. Next, in Sect.~\ref{sect:Laguerre}, we deal with the Laguerre case (beta Laguerre ensembles and beta Laguerre processes). The paper ends with appendices on dual polynomials and finite free convolutions.

\section{Finite free convolution}
\label{sect:FFC}
Let us begin with defining a convolution of polynomials. For two polynomials of order $N$,
\[
	p(x) = \sum_{i=0}^N (-1)^i \alpha_i x^{N-i}, \quad  	q(x) = \sum_{i=0}^N (-1)^i \beta_i x^{N-i},
\] 
its $N$th symmetric additive convolution is defined to be 
\begin{equation}
	(p \boxplus_N q)(x) := \sum_{k=0}^N (-1)^k x^{N-k} \sum_{i+j=k} \frac{(N-i)!(N-j)!}{N!(N-k)!} \alpha_i \beta_{j}.
\end{equation}
That convolution which was introduced by Szeg\"o and Walsh in the 1920s has been rediscovered to be an expected characteristic polynomial of a sum of random matrices \cite{Marcus-Spielman-Srivastava-2022}.

It is known that if $p(x)$ and $q(x)$ are real rooted polynomials, so is $(p \boxplus_N q)(x)$. Thus, we define the finite convolution of two $N$-tuples of real numbers $(a_1, \dots, a_N)$ and $(b_1, \dots, b_N)$ to be the roots $(c_1, \dots, c_N)$ (in an ascending order) of $(p\boxplus_N q)(x)$, where
\[
	p(x) = \prod_{i=1}^N (x-a_i), \quad q(x) =\prod_{i=1}^N (x-b_i). 
\]
Note that 
\[
	p(x) = \prod_{i=1}^N (x-a_i) = \sum_{k=0}^N (-1)^k e_k(a_1, \dots, a_N) x^{N-k},
\]
where for $k = 1, \dots, N$,
\[
	e_k(x_1, \dots, x_N) = \sum_{1 \le j_1 < j_2 < \dots < j_k \le N} x_{j_1} x_{j_2} \cdots x_{j_k}
\]
are elementary symmetric polynomials in $N$ variables $x_1, \dots, x_N$, and $e_0(x_1, \dots, x_N) = 1$. Equivalently, we can define the finite free convolution in terms of elementary symmetric polynomials.

\begin{definition}\label{defn:FFC}
$(c_1, \dots, c_N)$ (in an ascending order) is said to be the finite free convolution of two $N$-tuples of real numbers $(a_1, \dots, a_N)$ and $(b_1, \dots, b_N)$,  denoted by
\[
	(c_1, \dots, c_N) = (a_1, \dots, a_N) \boxplus_N (b_1, \dots, b_N),
\]
if 
\begin{equation}\label{ekcab}
	e_k(c_1, \dots, c_N) = \sum_{i+j=k} \frac{(N-i)!(N-j)!}{N!(N-k)!} e_{i}(a_1, \dots, a_N) e_j(b_1, \dots, b_N), \quad k = 1, \dots, N.
\end{equation}
\end{definition}
\begin{remark}
For two discrete probability measures on $\R$ of the forms $\mu_a=\frac1N \sum_{i=1}^N \delta_{a_i}$ and $\mu_b=\frac1N \sum_{i=1}^N \delta_{b_i}$, their finite free convolution is the probability measure $\mu_c=\frac1N \sum_{i=1}^N \delta_{c_i}$, written as 
\[
	\mu_c = \mu_a \boxplus_N \mu_b,
\]
if $(c_1, \dots, c_N) = (a_1, \dots, a_N) \boxplus_N (b_1, \dots, b_N)$.
It was argued in \cite{Mergny-Potters-2022} that when $N \to \infty$, if $\mu_a$ (resp.\ $\mu_b$) converges weakly to $\mu_1$ (resp.\ $\mu_2$), then  $\mu_a \boxplus_N \mu_b$ converges weakly to the free convolution $\mu_1 \boxplus \mu_2$ of $\mu_1$ and $\mu_2$. That explains the terminology `finite free convolution'.
\end{remark}

We now introduce another explanation of the finite free convolution  \cite[\S 3.3]{Mergny-Potters-2022}.
To real numbers $a_1, \dots, a_N$, there are complex numbers $s_1, \dots, s_N$ such that 
\begin{equation}\label{atos}
	 \prod_{i=1}^N (z - a_i) = \frac1N \sum_{i=1}^N (z - s_i)^N.
\end{equation}
In other words, to a measure $\mu_a  = \frac1N \sum_{i=1}^N \delta_{a_i}$,
there is a measure $\nu_a = \frac1N \sum_{i=1}^N \delta_{s_i}$ supported on complex numbers $s_1, \dots, s_N$ such that 
\[
	\int (z - x)^N d\nu_a = \frac1N \sum_{i=1}^N (z - s_i)^N = \prod_{i=1}^N (z - a_i) = \exp\left(N \int \log(z - u) d\mu_a(u)\right).
\]
This relation is called the Markov--Krein relation with negative parameter $c=-N$~\cite{KreinNudelman1977}. Let
 $S_a$ be a complex-valued random variable distributed according to  
the discrete measure $\nu_a$. Since 
\[
	\Ex[S_a^k] = \frac 1N \sum_{i=1}^N s_i^k,
\]
it follows that the relation~\eqref{atos} is equivalent to the condition that 
\begin{equation}
	\binom N k \Ex[S_a^k] = e_k(a_1, \dots, a_N), \quad k = 1, \dots, N.
\end{equation}
Denote by $S_b$ and $S_c$ the corresponding random variables related to the probability measures $\mu_b = \frac1N \sum_{i=1}^N \delta_{b_i}$ and $\mu_c = \frac1N \sum_{i=1}^N \delta_{c_i}$, respectively. Then $\mu_c = \mu_a \boxplus_N \mu_b$, if and only if 
\[
	\Ex[S_c^k] = \sum_{i=0}^k \binom k i \Ex[S_a^i] \Ex[S_b^{k-i}], \quad k = 1, \dots, N.
\]
It tells us that the first $N$ moments of $S_c$ coincides with the corresponding moments of the sum of independent copies of $S_a$ and $S_b$. This explanation relates the finite free convolution with the concept of $c$-convolution, for $c > 0$, in the high temperature regime.  

We conclude this subsection with the following result on the convergence of elementary symmetric polynomials.

\begin{lemma}\label{lem:fromektox}
For each $\beta>0$, let $x_1^{(\beta)}\le \cdots \le x_N^{(\beta)}$ be real numbers. Assume that for every $k=1,\dots,N$,
\[
e_k\bigl(x_1^{(\beta)},\dots,x_N^{(\beta)}\bigr)
   \to c_k
   \quad\text{as} \quad \beta\to\infty.
\]
Then as $\beta \to \infty$ 
\[
	x_i^{(\beta)} \to x_i, \quad i = 1, \dots, N,
\]
where $x_1\le \cdots \le x_N$ are the zeros of the polynomial 
\[
	p(x) =  \sum_{k=0}^N (-1)^k c_k x^{N-k}.
\]
\end{lemma}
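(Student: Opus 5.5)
Write $p_\beta(x)=\prod_{i=1}^N (x-x_i^{(\beta)})=\sum_{k=0}^N(-1)^k e_k(x_1^{(\beta)},\dots,x_N^{(\beta)})x^{N-k}$. The hypothesis says the coefficients of $p_\beta$ converge to those of $p$, which is monic of degree $N$. The plan has three steps: show the roots of $p_\beta$ stay bounded, show that $p$ is real rooted, and then use continuity of roots with the ascending ordering.

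\emph{Step 1 (boundedness).} The roots stay in a compact set. Since the $e_k(x^{(\beta)})$ converge, they are bounded for large $\beta$, say $|e_k|\le M$. The standard Cauchy bound then gives $|x_i^{(\beta)}|\le 1+M$ for all $i$. One could also argue directly: $\sum_i (x_i^{(\beta)})^2 = e_1^2-2e_2$ is bounded.

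\emph{Step 2 (real-rootedness of $p$).} Take any sequence $\beta_n\to\infty$. By compactness, a subsequence exists along which $(x_1^{(\beta_n)},\dots,x_N^{(\beta_n)})\to(y_1,\dots,y_N)\in\R^N$. The limit satisfies $y_1\le\cdots\le y_N$, since weak inequalities survive limits. The $e_k$ are polynomials, hence continuous, so $e_k(y_1,\dots,y_N)=c_k$ for every $k$. Therefore $p(x)=\prod_i(x-y_i)$. In particular $p$ has only real zeros, and their ascending ordering $x_1\le\cdots\le x_N$ (with multiplicity) must equal $(y_1,\dots,y_N)$. Unique factorization of polynomials makes this ordered multiset unique.

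\emph{Step 3 (conclusion).} Every sequence $\beta_n\to\infty$ has a subsequence along which the ordered vector converges to the same limit $(x_1,\dots,x_N)$. Hence the whole family converges: $x_i^{(\beta)}\to x_i$ for each $i$.

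The only delicate point is matching the ordering and multiplicities of the limit with the zeros of $p$. Working with the sorted vector handles this, since equality of the multisets of roots then forces equality of the ordered tuples. The boundedness in Step 1 is the key ingredient that lets us extract convergent subsequences at all.
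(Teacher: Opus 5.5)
Your proof is correct, but it is more self-contained than the paper's. The paper's proof is a one-liner: the coefficients of $p^{(\beta)}$ converge to those of $p$, and the conclusion is read off from the general theorem on continuity of the roots of a polynomial (cited from Cucker--Gonz\'alez Corbal\'an). That theorem concerns complex roots, which converge only up to a relabeling.

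You instead prove exactly the case needed, using:
\begin{itemize}
\item boundedness of the roots, via the Cauchy bound or more simply $\sum_i (x_i^{(\beta)})^2 = e_1^2 - 2e_2$;
\item Bolzano--Weierstrass and continuity of the $e_k$;
\item uniqueness of the root multiset of a monic polynomial;
\item the subsequence principle.
\end{itemize}
Because all the $x_i^{(\beta)}$ are real, the ascending ordering is a canonical labeling that is preserved under limits. So you never need the matching-up-to-permutation part of the general theorem; your argument is essentially the real, sorted specialization of it.

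The paper's route buys brevity and generality: it would apply equally to approximating polynomials that are not real-rooted. Your route is elementary. It also proves, in Step~2, that $p$ is real-rooted, which the statement tacitly assumes when it lists the zeros of $p$ as $x_1 \le \cdots \le x_N$.
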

\begin{proof}
For each $\beta$, define the polynomial $p^{(\beta)}$ by 
\[
	p^{(\beta)}(x) = \prod_{i=1}^N (x - x_i^{(\beta)}) =  \sum_{k=0}^N (-1)^k e_k(x_1^{(\beta)},\dots,x_N^{(\beta)})x^{N-k}.
\]
By assumption, the coefficients of $p^{(\beta)}$ converge to the corresponding coefficients of $p$, which implies the convergence of $\{x_i^{(\beta)}\}$ by the continuity of the roots of polynomials \cite{Cucker-GonzalezCorbalan-1989}. The proof is complete.
\end{proof}

\section{Beta Dyson's Brownian motions in the freezing regime}
\label{sect:LLN}
Recall from the introduction that, formally, when $N $ is fixed and the initial data $a_1 \le a_2 \le \cdots \le a_N$ are fixed, as $\beta \to \infty$, beta Dyson's Brownian motions $\lambda_1^{(\beta)}(t), \dots, \lambda_N^{(\beta)}(t)$ converge to deterministic limits $y_1(t) \le \dots \le y_N(t)$ satisfying the ODEs~\eqref{ODEs-yi}.
Fix $N \ge 2$ from now on.
We are in a position to show that in the freezing regime, the eigenvalue processes converge to the finite free convolution of the initial data $(a_1, \dots, a_N)$ and $\sqrt t(z_{1, N}^{(H)}, \dots, z_{N, N}^{(H)})$. Here recall also that $z_{1, N}^{(H)}, \dots, z_{N, N}^{(H)}$ are zeros of the $N$th Hermite polynomial $H_N$, or the eigenvalues of the tridiagonal matrix $J_N$~\eqref{Gauss-JN}.

For $T > 0$, let $C([0, T])$ be the space of real continuous functions on $[0, T]$ endowed with the uniform norm 
\[
	\left\|f\right\|_\infty =  \sup_{0 \le t \le T} |f(t)|.
\]
We say that a sequence of $C([0, T])$-valued random elements $X^{(\beta)}(t)$ converges in probability to a deterministic limit $x(t) \in C([0, T])$, denoted by $X^{(\beta)}(t) \PTto x(t)$, if for any $\varepsilon > 0$,
\[
	\lim_{\beta \to \infty}\Prob(\| X^{(\beta) } - x\|_\infty \ge \varepsilon)  = 0.
\] 
For convenience, we restate Theorem~\ref{thm:main-intro} here.
\begin{theorem}
\label{thm:restate}
As $\beta \to \infty$, 
\[
	(\lambda_1^{(\beta)}(t), \dots, \lambda_N^{(\beta)}(t)) \Pto (a_1, \dots, a_N) \boxplus_N \sqrt t(z_{1, N}^{(H)}, \dots, z_{N, N}^{(H)}),
\]
uniformly for $t \in [0, T]$. More precisely, let 
\[
	(y_1(t), \dots, y_N(t)) =  (a_1, \dots, a_N) \boxplus_N \sqrt t(z_{1, N}^{(H)}, \dots, z_{N, N}^{(H)}).
\]
Then as $\beta \to \infty$, 
\[
\lambda_i^{(\beta)}(t) \PTto y_i(t), \quad i = 1, \dots, N.
\]
\end{theorem}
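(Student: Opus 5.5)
Our approach is to work with elementary symmetric polynomials of the eigenvalue processes instead of the eigenvalues themselves. This avoids the singular drift $1/(\lambda_i-\lambda_j)$. Writing $e_k(t)=e_k(\lambda_1^{(\beta)}(t),\dots,\lambda_N^{(\beta)}(t))$, we apply Itô's formula to $e_k$, using \eqref{eq:DBM}.

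\textbf{Step 1: the Itô computation.} We use three facts:
\[
\partial_i e_k = e_{k-1}(\hat\lambda_i), \qquad \partial_i^2 e_k = 0,
\]
where $\hat\lambda_i$ denotes the variables with $\lambda_i$ omitted. Moreover, the quadratic covariations $d\langle\lambda_i,\lambda_j\rangle=\frac{2}{\beta}\delta_{ij}\,dt$ vanish off the diagonal. Hence there is no second-order term. Symmetrizing the drift over pairs gives
\[
\sum_{i\ne j}\frac{e_{k-1}(\hat\lambda_i)-e_{k-1}(\hat\lambda_j)}{2(\lambda_i-\lambda_j)}
=-\sum_{i<j} e_{k-2}(\hat\lambda_{i,j})
=-\binom{N-k+2}{2}e_{k-2},
\]
because $e_{k-1}(\hat\lambda_i)-e_{k-1}(\hat\lambda_j)=(\lambda_j-\lambda_i)e_{k-2}(\hat\lambda_{i,j})$, and each $(k-2)$-subset is counted $\binom{N-k+2}{2}$ times. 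So we obtain
\[
de_k(t) = -\binom{N-k+2}{2} e_{k-2}(t)\,dt + \sqrt{\tfrac{2}{\beta}}\sum_{i=1}^N e_{k-1}(\hat\lambda_i(t))\,db_i(t),
\]
with $e_{-1}=0$ and $e_0=1$. The drift is exactly linear and triangular. This is legitimate because $e_k$ is a polynomial, hence smooth, and the strong solution is a semimartingale \cite{Cepa-Lepingle-1997}. Collisions at isolated times, which can occur for $\beta<1$, are irrelevant since we take $\beta\to\infty$. In any case the symmetrized drift identity holds wherever the drift in \eqref{eq:DBM} is defined.

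\textbf{Step 2: the limiting system.} Let $\bar e_k(t)$ solve the linear ODE system
\[
\bar e_k'=-\binom{N-k+2}{2}\bar e_{k-2}, \qquad \bar e_k(0)=e_k(a).
\]
We then check that $\bar e_k(t)$ equals the right-hand side of \eqref{ekcab} with $b=\sqrt t\, z^{(H)}$. For $a=0$ the solution is $\bar e_k(t)=e_k(\sqrt t z^{(H)})=t^{k/2}e_k(z^{(H)})$, vanishing for odd $k$. This is consistent with the known zero-initial-condition solution $\sqrt t z^{(H)}$. Alternatively, it is checked directly from the Hermite coefficients $e_{2m}(z^{(H)})=(-1)^m\frac{N!}{2^m m!(N-2m)!}$, since these satisfy the recursion
\[
\tfrac{d}{dt}\bigl(t^m c_{2m}\bigr)=-\binom{N-2m+2}{2}t^{m-1}c_{2m-2}.
\]
For general $a$, differentiate the candidate
\[
\sum_{i+j=k}\frac{(N-i)!(N-j)!}{N!(N-k)!}e_i(a)\,t^{j/2}e_j(z^{(H)}),
\]
and use the same recursion on the $j$-index. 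One verifies that the factorial weights convert $\binom{N-j+2}{2}$ into $\binom{N-k+2}{2}$, so the candidate satisfies the ODE. Uniqueness of solutions of this linear system then identifies $\bar e_k$.

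\textbf{Step 3: convergence and conclusion.} Set $\Delta_k=e_k-\bar e_k$. Then
\[
\Delta_k(t)=-\binom{N-k+2}{2}\int_0^t\Delta_{k-2}\,ds+M_k(t),
\]
where $M_k$ is the martingale term. We bound $M_k$ by Doob's inequality:
\[
\Ex\sup_{t\le T}|M_k|^2\le \frac{8}{\beta}\sum_i\Ex\int_0^T e_{k-1}(\hat\lambda_i)^2\,ds.
\]
This requires moment bounds uniform in $\beta\ge1$ (say) on $\sup_{t\le T}|\lambda_i^{(\beta)}(t)|$. The natural route is to induct on $k$ using the same Itô formula. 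Alternatively, use $p_2=\sum\lambda_i^2=e_1^2-2e_2$, which satisfies
\[
dp_2=\bigl(N(N-1)+\tfrac{2N}{\beta}\bigr)dt+\text{martingale},
\]
giving $\Ex\sup p_2^{m}\le C$ uniformly, and bounding all $e_j$ polynomially by $p_2$. This is the step I expect to be the main technical obstacle, though it looks routine. Given these bounds, $\sup_t|M_k|\to0$ in $L^2$. Induction on $k$ (the drift only involves $\Delta_{k-2}$) then yields $\sup_{t\le T}|\Delta_k(t)|\Pto0$ for all $k$.

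Finally, we pass from $e_k$ to the roots. Since the roots of a monic polynomial depend continuously on its coefficients, uniformly on compact coefficient sets, Lemma~\ref{lem:fromektox} upgrades to a uniform-in-$t$ statement. The coefficient paths $\bar e_k(t)$ are continuous on the compact interval $[0,T]$, so their range is compact. Hence for every $\varepsilon>0$ there is $\delta>0$ with
\[
\max_k\sup_t|\Delta_k(t)|<\delta \;\Longrightarrow\; \max_i\sup_t|\lambda_i^{(\beta)}(t)-y_i(t)|<\varepsilon.
\]
This gives $\lambda_i^{(\beta)}\PTto y_i$.
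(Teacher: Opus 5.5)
Your proposal is correct and follows essentially the same route as the paper: Itô's formula for the elementary symmetric polynomials, with the symmetrization identity giving the triangular linear drift $-\binom{N-k+2}{2}e_{k-2}$; Doob's inequality plus uniform moment bounds to kill the martingale parts; induction on $k$; identification of the limiting ODE solution with the finite free convolution by differentiating \eqref{ekcab}; and continuity of roots to pass back to the eigenvalues. Two of your steps are slightly more self-contained than the paper's: the closed SDE for $p_2=\sum_i\lambda_i^2$ gives the moment bounds for arbitrary initial data (the paper defers these to Section~\ref{sect:EMP}, where they are stated only for zero initial condition and not proved), and the uniform-continuity argument on a compact coefficient set yields convergence in probability directly, rather than via the deterministic Lemma~\ref{lem:fromektox-process}.
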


\begin{proof}
We divide the proof into several lemmata. The idea here is to investigate the limiting behavior of the elementary symmetric polynomials 
\[
e_k^{(\beta)}(t) := e_k(\lambda_1^{(\beta)}(t), \dots, \lambda_N^{(\beta)}(t)), \quad k = 0, \dots, N.
\] 
In Lemma~\ref{lem:ektogk}, we show that for each $k$, $e_k^{(\beta)}(t)$ converges uniformly for $t \in [0, T]$ to a deterministic process $g_k(t)$ defined recursively. Next, those uniform convergences of elementary symmetric polynomials imply that each $\lambda_i^{(\beta)}(t)$ converges uniformly to a limit $y_i(t)$  (Lemma~\ref{lem:ektolambdak}), where 
\[
	e_k(y_1(t), \dots, y_N(t)) = g_k(t), \quad k = 1, \dots, N.
\]
Finally, Lemma~\ref{lem:finite-convolution} states that the limits $(y_1(t), \dots, y_N(t))$ are the finite free convolution of the initial data $(a_1, \dots, a_N)$ and $\sqrt t(z_{1, N}^{(H)}, \dots, z_{N, N}^{(H)})$.
\end{proof}

Let us get into detailed arguments.
We begin with an application of It\^o's formula. Since $\frac{\partial^2 e_k}{\partial x_i^2} = 0$, it follows that for $k \ge 2$,
\begin{align}\label{dek}
    de_k^{(\beta)}(t)  &= de_k(\lambda_1^{(\beta)}(t), \dots, \lambda_N^{(\beta)}(t))\notag\\
    &= \sum_{i=1}^N \frac{\partial e_k}{\partial x_i}(\lambda_1^{(\beta)}(t), \dots, \lambda_N^{(\beta)}(t))\, d\lambda_i^{(\beta)}(t) \notag\\
    &= \sum_{i=1}^N \frac{\partial e_k}{\partial x_i}(\lambda_1^{(\beta)}(t), \dots, \lambda_N^{(\beta)}(t))\left( \sqrt{\frac{2}{\beta}}\, db_i(t)+ \sum_{\substack{j\neq i}}
            \frac{1}{\lambda_i^{(\beta)}(t)-\lambda_j^{(\beta)}(t)}\, dt \right) \notag\\
    &= \sqrt{\frac{2}{\beta}} \sum_{i=1}^N \partial_i e_k(\lambda_1^{(\beta)}(t)), \dots, \lambda_N^{(\beta)}(t))db_i(t) - \frac{(N-k+1)(N-k+2)}{2} e_{k-2}^{(\beta)}(t)dt.
\end{align}
Here for simplicity, we have used the notation $\partial_i := 
\frac {\partial }{\partial x_i}$ and the following identity.
\begin{lemma}
For $k \ge 2$ and for distinct $x_1, \dots, x_N$,
\[
	 \sum_{{j\neq i}}
\frac{\partial_i e_k(x_1, \dots, x_N)}{x_i - x_j} = - \frac{(N-k+1)(N-k+2)}{2} e_{k-2}(x_1, \dots, x_N).
\]
\end{lemma}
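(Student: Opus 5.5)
Write $\partial_i e_k(x) = e_{k-1}(x^{(i)})$, where $x^{(i)}$ denotes the $(N-1)$-tuple obtained by deleting $x_i$. The plan is to symmetrize the sum over ordered pairs $i\ne j$:
\[
\sum_{i}\sum_{j\ne i}\frac{\partial_i e_k}{x_i-x_j}=\frac12\sum_{i\ne j}\frac{e_{k-1}(x^{(i)})-e_{k-1}(x^{(j)})}{x_i-x_j}.
\]
The left-hand side is the quantity appearing in \eqref{dek}, which is also summed over $i$. For a fixed pair $i\ne j$, let $x^{(ij)}$ denote the tuple with both $x_i$ and $x_j$ removed. Expanding each term according to whether $x_j$ (respectively $x_i$) is present gives
\[
e_{k-1}(x^{(i)})=e_{k-1}(x^{(ij)})+x_j\,e_{k-2}(x^{(ij)}),\qquad e_{k-1}(x^{(j)})=e_{k-1}(x^{(ij)})+x_i\,e_{k-2}(x^{(ij)}).
\]
Subtracting, the difference is $(x_j-x_i)e_{k-2}(x^{(ij)})$. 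The singular denominators therefore cancel, and each pair contributes exactly $-e_{k-2}(x^{(ij)})$.

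This reduces everything to a counting identity:
\[
\sum_{i\ne j}e_{k-2}(x^{(ij)})=2\sum_{i<j}e_{k-2}(x^{(ij)}).
\]
Every monomial of $e_{k-2}(x)$ consists of $k-2$ distinct indices. It appears in $e_{k-2}(x^{(ij)})$ exactly when $\{i,j\}$ avoids those $k-2$ indices, and there are $\binom{N-k+2}{2}$ such unordered pairs. Hence
\[
\sum_{i<j}e_{k-2}(x^{(ij)})=\binom{N-k+2}{2}e_{k-2}(x).
\]
Combining the factor $\tfrac12$, the factor $2$ from passing to unordered pairs, and the minus sign, the total is
\[
-\binom{N-k+2}{2}e_{k-2}=-\frac{(N-k+1)(N-k+2)}{2}\,e_{k-2},
\]
which is the stated identity.

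The only real subtlety is the interpretation of the statement. As literally written it is unsummed in $i$, but the claimed identity holds only after summing over $i$, which is exactly how it enters \eqref{dek}. So I would state and prove it with $\sum_i$ included. The distinctness hypothesis ensures that every term of the symmetrized sum is well defined. The cases $k=2$ (where $e_0=1$ and the count is just $\binom N2$) and $k=N$ fit the same formula.
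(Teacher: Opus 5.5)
Your proof is correct and is essentially the paper's argument. You symmetrize over ordered pairs $i\ne j$, show that $(\partial_i e_k-\partial_j e_k)/(x_i-x_j)=-e_{k-2}(x^{(ij)})$, and count that each monomial of $e_{k-2}$ survives for $(N-k+1)(N-k+2)$ ordered pairs. The only difference is cosmetic: you get the difference quotient by expanding $e_{k-1}(x^{(i)})$ in $x_j$ directly, whereas the paper reads it off from the coefficients of $(\partial_i p-\partial_j p)/(x_i-x_j)=-\prod_{l\ne i,j}(x-x_l)$. Your remark that the identity holds only after summing over $i$ is also right, and that implicit double sum is exactly what the paper's symmetrization step uses.
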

\begin{proof}
Regard the polynomial 
\[
	p = \prod_{l = 1}^N (x - x_l) = \sum_{k=0}^N (-1)^k e_k(x_1, \dots, x_N) x^{N-k}
\]
as a function of $x$ and $x_1, \dots, x_N$, we calculate its partial derivative
\[
	\partial_i p = -\prod_{l \neq i} (x - x_l) = \sum_{k=1}^N (-1)^k \partial_i e_k(x_1, \dots, x_N) x^{N-k}.
\]
It follows that for $i \neq j$,
\[
	\frac{\partial_i p - \partial_j p}{x_i - x_j} = - \prod_{l \neq i, j} (x - x_l) = \sum_{k=1}^N (-1)^k \frac{\partial_i e_k(x_1, \dots, x_N) - \partial_j e_k(x_1, \dots, x_N)}{x_i - x_j} x^{N-k}.
\]
Consequently by identifying the coefficient of $x^{N-k}$, we get that 
\[
	\frac{\partial_i e_k(x_1, \dots, x_N) - \partial_j e_k(x_1, \dots, x_N)}{x_i - x_j} = - e_{k-2}(\{x_1, \dots, x_N\} \setminus\{x_i, x_j\}).
\]
Finally, we use the usual `trick'
\begin{align*}
 \sum_{{j\neq i}} 
\frac{\partial_i e_k(x_1, \dots, x_N)}{x_i - x_j}  &= \frac12  \sum_{{j\neq i}}\frac{\partial_i e_k(x_1, \dots, x_N) - \partial_j e_k(x_1, \dots, x_N)}{x_i - x_j}\\
&=- \frac12  \sum_{{j\neq i}}^N e_{k-2}(\{x_1, \dots, x_N\} \setminus\{x_i, x_j\})\\
&= - \frac{(N-k+1)(N-k+2)}{2} e_{k-2}(x_1, \dots, x_N).
\end{align*}
Here the last line holds because for each $i_1 < \cdots < i_{k-2}$, the monomial $x_{i_1} \cdots x_{i_{k-2}}$ appears in $e_{k-2}(\{x_1, \dots, x_N\} \setminus\{x_i, x_j\})$ for $(N-k+1)(N-k+2)$ pairs of $(i, j)$. The proof is complete.
\end{proof}

We collect properties of the uniform convergence in probability in the following lemma whose proof is omitted.
\begin{lemma}
\label{2.2}
\quad
Let $\{X^{(\beta)}(t)\}_\beta$ and $\{Y^{(\beta)}(t)\}_\beta$ be two sequences of $C([0,T])$-valued random elements. 
Assume that there exist deterministic limit functions $x(t), y(t) \in C([0,T])$ such that as $\beta \to \infty$,
\[
X^{(\beta)}(t) \PTto x(t), 
\qquad 
Y^{(\beta)}(t) \PTto y(t).
\]
Then, for any $a(t), b(t) \in C([0,T])$, the following convergences hold.
\begin{enumerate}[label=\rm(\roman*)]
    \item 
    As $\beta \to \infty$,
    \[
    a(t) X^{(\beta)}(t) + b(t) Y^{(\beta)}(t)
    \PTto    a(t)x(t) + b(t)y(t).
    \]

    \item    
    As $\beta \to \infty$,
    \[
    X^{(\beta)}(t)\,Y^{(\beta)}(t)
    \PTto
    x(t)y(t).
    \]
    
    \item 
    As $\beta \to \infty$, 
\[
	\int_0^t X^{(\beta)}(s) ds	\PTto	\int_0^t x(s)ds.
\]
\end{enumerate}
\end{lemma}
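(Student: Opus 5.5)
The plan is to prove the three items in order, with (i) and (ii) following directly from the definition of $\PTto$ and (iii) from a one-line bound on the integral.

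\textbf{Item (i).} For $\varepsilon>0$, put $A=\|a\|_\infty$ and $B=\|b\|_\infty$; both are finite because $a,b\in C([0,T])$. The triangle inequality gives
\[
\|aX^{(\beta)}+bY^{(\beta)}-(ax+by)\|_\infty \le A\|X^{(\beta)}-x\|_\infty + B\|Y^{(\beta)}-y\|_\infty .
\]
The event that the left-hand side is at least $\varepsilon$ is contained in the union of $\{\|X^{(\beta)}-x\|_\infty\ge \varepsilon/(2A+1)\}$ and $\{\|Y^{(\beta)}-y\|_\infty\ge \varepsilon/(2B+1)\}$. Both probabilities tend to $0$ by assumption. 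The $+1$ in the denominators handles the degenerate case $A=0$ or $B=0$.

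\textbf{Item (ii).} Write
\[
X^{(\beta)}Y^{(\beta)}-xy=(X^{(\beta)}-x)(Y^{(\beta)}-y)+x(Y^{(\beta)}-y)+y(X^{(\beta)}-x),
\]
so that
\[
\|X^{(\beta)}Y^{(\beta)}-xy\|_\infty\le \|X^{(\beta)}-x\|_\infty\|Y^{(\beta)}-y\|_\infty+\|x\|_\infty\|Y^{(\beta)}-y\|_\infty+\|y\|_\infty\|X^{(\beta)}-x\|_\infty .
\]
On the event $\{\|X^{(\beta)}-x\|_\infty<\delta,\ \|Y^{(\beta)}-y\|_\infty<\delta\}$ the right-hand side is at most $\delta^2+(\|x\|_\infty+\|y\|_\infty)\delta$. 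Choose $\delta\le 1$ so that this quantity is below $\varepsilon$. The complement of that event has probability tending to $0$, which gives the claim. The only point needing care is that the limits $x,y$ are deterministic and bounded; this lets the cross term be controlled without any tightness argument.

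\textbf{Item (iii).} For every $t\in[0,T]$,
\[
\Bigl|\int_0^t (X^{(\beta)}(s)-x(s))\,ds\Bigr|\le T\|X^{(\beta)}-x\|_\infty .
\]
Taking the supremum over $t$, we get $\Prob\bigl(\sup_t|\cdot|\ge\varepsilon\bigr)\le \Prob\bigl(\|X^{(\beta)}-x\|_\infty\ge \varepsilon/T\bigr)\to 0$. The processes have continuous paths, so the integrals are well defined and again lie in $C([0,T])$.

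None of these steps presents a real obstacle; (ii) is the only one requiring an $\varepsilon$–$\delta$ choice. I would also note that (i)–(iii) extend by induction to finite sums, products, and polynomial expressions, which is how they will be used on the recursion for $e_k^{(\beta)}(t)$ coming from \eqref{dek}.
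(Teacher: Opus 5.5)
Your proof is correct and complete. The paper omits the proof of this lemma as routine, and your argument is exactly the standard verification that fills that gap: the triangle-inequality bound for (i), the three-term decomposition on the event $\{\|X^{(\beta)}-x\|_\infty<\delta,\ \|Y^{(\beta)}-y\|_\infty<\delta\}$ for (ii), and the estimate $\sup_t|\int_0^t(X^{(\beta)}-x)\,ds|\le T\|X^{(\beta)}-x\|_\infty$ for (iii). One small note on usage: the recursion for $e_k^{(\beta)}$ needs only (i) and (iii), whereas (ii) is what handles the products $S_j^{(\beta)}S_{n-2-j}^{(\beta)}$ in Section~\ref{sect:EMP}.
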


For $k \ge 2$,
we write~\eqref{dek} in an integral form 
\begin{align*}
	e_k^{(\beta)}(t)  &= e_k(a_1, \dots, a_N) + \sqrt{\frac{2}{\beta}} \sum_{i=1}^N \int_0^t \partial_i e_k(\lambda_1^{(\beta)}(s)), \dots, \lambda_N^{(\beta)}(s))db_i(s) \\
	&\quad - \frac{(N-k+1)(N-k+2)}{2} \int_0^t e_{k-2}^{(\beta)}(s)ds.
\end{align*}
While when $k = 1$, 
\[
	e_1^{(\beta)}(t) = \sum_{i=1}^N  \lambda_i^{(\beta)}(t) = e_1(a_1, \dots, a_N) + \sqrt{\frac{2}{\beta}} \sum_{i = 1}^N b_i(t).
\]
Now for any $k \ge 1$, denote the martingale part by 
\[
K_k^{(\beta)}(t)
=\sqrt{\frac{2}{\beta}}
\sum_{i=1}^N \int_0^t \partial_i e_k(\lambda_1^{(\beta)}(s), \dots, \lambda_N^{(\beta)}(s))\, db_i(s).
\]
We show that the martingale part vanishes as $\beta \to \infty$.

\begin{lemma}
As $\beta \to \infty$, 
\[
	K_k^{(\beta)}(t) \PTto 0.
\]
\end{lemma}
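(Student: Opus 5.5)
We write $K_k^{(\beta)}(t)=\sqrt{2/\beta}\,M_k^{(\beta)}(t)$ with
\[
M_k^{(\beta)}(t)=\sum_{i=1}^N\int_0^t \partial_i e_k(\lambda_1^{(\beta)}(s),\dots,\lambda_N^{(\beta)}(s))\,db_i(s),
\]
and the plan is to show that $\sup_{t\le T}|M_k^{(\beta)}(t)|$ is tight in $\beta$ (bounded in probability), uniformly for large $\beta$. The prefactor $\sqrt{2/\beta}\to0$ then gives $K_k^{(\beta)}\PTto0$. For $k=1$ this is immediate, since $M_1^{(\beta)}=\sum_i b_i$ does not depend on $\beta$ and $\sup_{t\le T}|\sum_i b_i(t)|$ is a.s.\ finite. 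For $k\ge2$ we use Doob's (or the BDG) inequality together with a localization, because we have no a priori moment bounds yet.

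The key step is an a priori bound on the particles, uniform in $\beta\ge \beta_0$. The natural quantity is $p_2^{(\beta)}(t)=\sum_i(\lambda_i^{(\beta)}(t))^2=(e_1^{(\beta)})^2-2e_2^{(\beta)}$. By It\^o's formula and the identity $\sum_{i\ne j}\lambda_i/(\lambda_i-\lambda_j)=N(N-1)/2$,
\[
p_2^{(\beta)}(t)=\sum_i a_i^2+\Bigl(\tfrac{2N}{\beta}+N(N-1)\Bigr)t+2\sqrt{\tfrac{2}{\beta}}\sum_i\int_0^t\lambda_i^{(\beta)}(s)\,db_i(s).
\]
Taking expectations with a localization and Fatou's lemma gives $\Ex[p_2^{(\beta)}(t)]\le C_T$ uniformly in $\beta\ge1$ and $t\le T$. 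Doob's $L^2$ inequality applied to the martingale, whose quadratic variation is $(8/\beta)\int_0^t p_2^{(\beta)}\,ds$, then yields $\Ex\sup_{t\le T}p_2^{(\beta)}(t)\le C_T'$. Consequently $R^{(\beta)}:=\sup_{t\le T}\max_i|\lambda_i^{(\beta)}(t)|$ is bounded in probability uniformly in $\beta\ge1$.

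To finish, fix $\varepsilon,\eta>0$ and choose $R$ with $\Prob(R^{(\beta)}>R)<\eta$ for all $\beta\ge1$. Let $\tau_R=\inf\{t:\max_i|\lambda_i^{(\beta)}(t)|\ge R\}\wedge T$. Since $\partial_i e_k=e_{k-1}$ of the other $N-1$ variables, it is a polynomial in the $\lambda_j$ and is bounded by a constant $C_{k,N}R^{k-1}$ on $\{\max|\lambda_j|\le R\}$. The BDG/Doob inequality gives
\[
\Ex\sup_{t\le T}|K_k^{(\beta)}(t\wedge\tau_R)|^2\le \frac{8}{\beta}\,N\,T\,C_{k,N}^2R^{2k-2}\to0 .
\]
Hence
\[
\Prob\Bigl(\sup_{t\le T}|K_k^{(\beta)}(t)|\ge\varepsilon\Bigr)\le \eta+\varepsilon^{-2}\cdot O(1/\beta),
\]
and $\limsup_{\beta\to\infty}$ of the left-hand side is at most $\eta$. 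Since $\eta>0$ is arbitrary, the limit is $0$.

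The main obstacle is the uniform-in-$\beta$ control of $\sup_t\max_i|\lambda_i^{(\beta)}|$. The computation above handles it, but one must make sure that the localization is legitimate, since the drift is singular only at collisions and that singularity does not enter $p_2$. If the Doob step for $p_2$ is awkward, an alternative is to localize directly with $\tau_R$ in the $p_2$ equation and use Markov's inequality on $\Ex[p_2^{(\beta)}(\tau_R)]\le C_T$ to bound $\Prob(\tau_R<T)\le C_T/R^2$.
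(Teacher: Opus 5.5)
Your proof is correct, but it takes a different route from the paper's.

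The paper applies Doob's inequality to $K_k^{(\beta)}$ with no localization. It bounds $\Ex[\bra{K_k^{(\beta)}}_T]$ by $\frac{2}{\beta}D_k\int_0^T\Ex\bigl[\sum_i|\lambda_i^{(\beta)}(s)|^{2k-2}\bigr]ds$. It then invokes the uniform-in-$\beta$ moment bound $\Ex[S_{2k-2}^{(\beta)}(t)]\le C_{k-1,T}$ from Section~\ref{sect:EMP}. That bound is not proved there; it is only referred to \cite{NTT-2025}, and it is stated for the zero initial condition, whereas the lemma concerns general initial data.

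You need only the second moment. You get it explicitly from the exact equation for $p_2^{(\beta)}$, where the singular drift collapses to the constant $N(N-1)$. You then replace higher moments with the stopping time $\tau_R$ and tightness of $\sup_{t\le T}\max_i|\lambda_i^{(\beta)}(t)|$.

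Once its moment bounds are granted, the paper's route gives the quantitative estimate $\Ex\bigl[\sup_{t\le T}|K_k^{(\beta)}(t)|^2\bigr]=O(1/\beta)$. Yours gives only convergence in probability, but that is all Lemma~\ref{lem:ektogk} uses. In exchange, your argument is self-contained, handles arbitrary $a_1,\dots,a_N$ directly, and avoids the inductive moment estimates entirely.

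Two cosmetic remarks. First, choose $R$ so that $\Prob(R^{(\beta)}\ge R)<\eta$, since what you use is $\{\tau_R<T\}\subset\{R^{(\beta)}\ge R\}$. Second, your fallback is actually the shortest version. Applying Markov's inequality to $\Ex[p_2^{(\beta)}(\tau_R)]\le C_T$, where the stopped integrand is bounded, gives $\Prob(\tau_R<T)\le C_T/R^2$ with no Doob step for $p_2^{(\beta)}$.
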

\begin{proof}
For any given $\varepsilon > 0$, it follows from Doob's martingale inequality that
\[
	\Prob\left(\sup_{0 \le t \le T} |K_k^{(\beta)}(t)| \ge \varepsilon \right) \le \frac{1}{\varepsilon^2} \Ex[\bra{K_k^{(\beta)}}_T],
\]
where the quadratic variation $\bra{K_k^{(\beta)}}_T$ satisfies
\[
\langle K_k^{(\beta)}\rangle_T
=
\frac{2}{\beta} \int_0^T 
\sum_{i=1}^N \bigl|\partial_i e_k(\lambda_1^{(\beta)}(s), \dots, \lambda_N^{(\beta)}(s))\bigr|^2\, ds.
\]
Thus, it suffices to show that $ \Ex[\bra{K_k^{(\beta)}}_T] \to 0$ as $\beta \to \infty$.

Since each $\partial_i e_k(x_1, \dots, x_N)$ is a polynomial of degree $k-1$ in 
$(x_1,\dots,x_N)$, it follows that 
\[
\sum_{i=1}^N |\partial_i e_k(x_1, \dots, x_N)|^2
\le D_k\bigl(|x_1|^{2k-2} + \cdots + |x_N|^{2k-2}\bigr),
\]
holds for some constant $D_k$. In the next section, we show that 
\[
	\Ex \left[\frac1N\sum_{i=1}^N (\lambda_i^{(\beta)}(t) )^{2k-2}\right] \le C_{k-1, T}, \quad t \in [0, T],
\]
for some constant $C_{k-1, T}$ not depending on $\beta$. Therefore, 
\[
	 \Ex[\bra{K_k^{(\beta)}}_T]  \le \frac{const}\beta,
\]
which clearly tends to zero as $\beta \to \infty$. The proof is complete.
\end{proof}

Next, by induction, we deduce the following.
\begin{lemma}\label{lem:ektogk}
	As $\beta \to \infty$, 
\[
	e_k^{(\beta)}(t) \PTto g_k(t),
\]
where $g_k(t)$ is defined recursively by $g_0(t)=1, g_1(t)= e_1(a_1,\ldots,a_N),$
\begin{equation}\label{ODE-gk}
	g_k(t) = e_k(a_1, \dots, a_N) - \frac{(N-k+1)(N-k+2)}{2}  \int_0^t g_{k-2} (s) ds, \quad (k \ge 2).
\end{equation}
\end{lemma}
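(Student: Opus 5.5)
We argue by strong induction on $k$, using the integral representation of $e_k^{(\beta)}(t)$ derived from \eqref{dek} together with the preceding lemma showing that the martingale parts $K_k^{(\beta)}(t)$ vanish uniformly in probability. The base cases are $k=0$ and $k=1$.

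For $k=0$ we have $e_0^{(\beta)}\equiv 1 = g_0$, so there is nothing to prove. For $k=1$ we have
\[
e_1^{(\beta)}(t) = e_1(a_1,\dots,a_N) + K_1^{(\beta)}(t),
\]
where $K_1^{(\beta)}(t)=\sqrt{2/\beta}\sum_i b_i(t)$. By the preceding lemma (or directly from Doob's inequality, since $\Ex[\bra{K_1^{(\beta)}}_T]=2NT/\beta$), $K_1^{(\beta)}\PTto 0$. Hence $e_1^{(\beta)}\PTto g_1$, a constant function.

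Now let $k\ge 2$ and assume $e_{k-2}^{(\beta)}\PTto g_{k-2}$, where $g_{k-2}\in C([0,T])$; continuity holds inductively, since each $g_j$ is a polynomial in $t$. The integral form gives
\[
e_k^{(\beta)}(t) = e_k(a_1,\dots,a_N) + K_k^{(\beta)}(t) - \frac{(N-k+1)(N-k+2)}{2}\int_0^t e_{k-2}^{(\beta)}(s)\,ds .
\]
By Lemma~\ref{2.2}(iii), the integral term converges uniformly in probability to $\int_0^t g_{k-2}(s)\,ds$. The martingale term tends to $0$ by the preceding lemma. Lemma~\ref{2.2}(i) then combines the constant, the martingale part and the integral with constant coefficients, and yields $e_k^{(\beta)}\PTto g_k$ with $g_k$ exactly as in \eqref{ODE-gk}. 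This closes the induction. The two chains, even $k$ from $k=0$ and odd $k$ from $k=1$, never interact, so a step of $2$ in the induction suffices.

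The only nontrivial input is the vanishing of $K_k^{(\beta)}$. That lemma relies on the uniform-in-$\beta$ moment bound $\Ex\bigl[\frac1N\sum_i(\lambda_i^{(\beta)}(t))^{2k-2}\bigr]\le C_{k-1,T}$, which is deferred to the next section. Here I take it as given. If one wanted it self-contained, I would prove it by Itô's formula for power sums $p_m=\sum_i\lambda_i^m$. The drift term $\sum_{i\ne j}\lambda_i^{m-1}/(\lambda_i-\lambda_j)$ symmetrizes into a combination of products $p_ap_b$ with $a+b=m-2$, and the Itô correction $\frac{1}{\beta}m(m-1)p_{m-2}$ is bounded for $\beta\ge1$. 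Taking expectations of even moments, and using Hölder to bound $\Ex[p_ap_b]$ by lower even moments, gives a Gronwall-type recursion in $m$ with constants independent of $\beta\ge 1$. Beyond this, the present lemma's proof is routine bookkeeping.
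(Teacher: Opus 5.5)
Your proof is correct and is essentially the paper's argument: induction in steps of two on the integral form of \eqref{dek}, with the martingale parts $K_k^{(\beta)}$ vanishing uniformly in probability and Lemma~\ref{2.2}(i),(iii) handling the constant and integral terms. You defer the uniform moment bound, and the paper likewise only cites it, so nothing is missing relative to the paper's proof.
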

\begin{proof}
The proof follows immediately by induction since the martingale parts vanish uniformly on $[0,T]$.
\end{proof}
\begin{remark}
	The limit $g_k(t)$ can be characterized by the following ODEs
\begin{equation}\label{ODEsgk}
	g_k'(t) = -\frac{(N-k+1)(N-k+2)}{2}   g_{k-2}(t), \quad g_k(0) = e_k(a_1, \dots, a_N), \quad (k =2,\dots, N). 
\end{equation}
Here $g_0(t)=1$ and $g_1(t)= e_1(a_1,\ldots,a_N)$.
\end{remark}

Similar to Lemma~\ref{lem:fromektox}, the convergence of elementary symmetric polynomials implies the convergence of $\lambda_i^{(\beta)}(t)$ themselves.

\begin{lemma}\label{lem:ektolambdak}
Let $y_1(t) \le y_2(t) \le \cdots \le y_N(t)$ be defined from the relations
\[
	e_k(y_1(t), \dots, y_N(t)) = g_k(t), \quad k = 1, \dots, N.
\]
Then as $\beta \to \infty$, 
\[
	\lambda_i^{(\beta)}(t) \PTto y_i(t), \quad i = 1, \dots, N. 
\]
\end{lemma}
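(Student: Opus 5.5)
We upgrade Lemma~\ref{lem:fromektox} from pointwise convergence of numbers to uniform convergence in probability on $[0,T]$. Two ingredients are needed: the $y_i(t)$ must be well defined and continuous, and the root map must be uniformly continuous on a suitable compact set of coefficient vectors.

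\textbf{Well-definedness of $y_i(t)$.} For each fixed $t$, Lemma~\ref{lem:ektogk} gives $e_k^{(\beta)}(t) \Pto g_k(t)$. Along a subsequence this holds almost surely, so the real vectors $(\lambda_1^{(\beta)}(t), \dots, \lambda_N^{(\beta)}(t))$ have coefficients converging to $(g_1(t),\dots,g_N(t))$. By Lemma~\ref{lem:fromektox}, together with the fact that limits of real-rooted monic polynomials are real-rooted, the polynomial $p_t(x)=\sum_{k}(-1)^k g_k(t)x^{N-k}$ has only real roots. Hence $y_1(t)\le\cdots\le y_N(t)$ are well defined. Each $g_k$ is continuous (indeed a polynomial in $t$) by \eqref{ODE-gk}, so continuity of ordered roots in the coefficients makes each $y_i\in C([0,T])$.

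\textbf{Uniform continuity of the root map.} Let $\Phi:\R^N\supset\mathcal R\to\R^N$ send the coefficient vector $(c_1,\dots,c_N)$ of a real-rooted monic polynomial to its ordered roots. The set $\mathcal R$ is closed, and $\Phi$ is continuous on it \cite{Cucker-GonzalezCorbalan-1989}. Set
\[
K=\{(g_1(t),\dots,g_N(t)): t\in[0,T]\},
\]
which is compact. Take its closed $1$-neighbourhood intersected with $\mathcal R$; this is a compact set $K_1$, and $\Phi$ is uniformly continuous on $K_1$. Given $\varepsilon>0$, choose $\delta\in(0,1)$ such that $u,v\in K_1$ with $|u-v|<\delta$ implies $|\Phi(u)-\Phi(v)|<\varepsilon$.

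\textbf{Conclusion.} Consider the event
\[
A_\beta=\Bigl\{\max_{k}\sup_{t\le T}|e_k^{(\beta)}(t)-g_k(t)|<\delta\Bigr\}.
\]
On $A_\beta$, for every $t$ both $e^{(\beta)}(t)$ and $g(t)$ lie in $K_1$ and are within distance $\delta$ of each other (up to a harmless norm constant). Since $\Phi(e^{(\beta)}(t))=(\lambda_i^{(\beta)}(t))_i$ and $\Phi(g(t))=(y_i(t))_i$, we get $\sup_t|\lambda_i^{(\beta)}(t)-y_i(t)|<\varepsilon$ on $A_\beta$. Lemma~\ref{lem:ektogk} gives $\Prob(A_\beta)\to1$, which completes the argument.

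\textbf{Main obstacle.} The uniform-continuity step is the only delicate point. It relies on continuity of the \emph{ordered} real roots on the closed set of real-rooted coefficient vectors, including at points where roots collide. This continuity follows from the Cucker--Gonz\'alez Corbal\'an result, or directly from Rouch\'e's theorem, because ordering is a continuous symmetric operation on multisets of real numbers.
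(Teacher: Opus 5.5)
Your proof is correct, and it reaches the conclusion by a somewhat different mechanism than the paper.

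The paper first proves a purely deterministic statement, Lemma~\ref{lem:fromektox-process}: uniform convergence of the $e_k$ of continuous functions implies uniform convergence of the ordered roots. It proves this by reducing to the static Lemma~\ref{lem:fromektox} through the sequential criterion ``$t_\beta\to t$ implies $x_i^{(\beta)}(t_\beta)\to x_i(t)$''. It then declares Lemma~\ref{lem:ektolambdak} a direct consequence. The transfer from deterministic uniform convergence to $\PTto$ is left to the reader, e.g.\ by passing to almost surely convergent subsequences in $C([0,T])$.

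You instead get uniformity from the uniform continuity of the ordered-root map $\Phi$ on the compact set $K_1$. This gives a single $\delta$ that works for all $t\in[0,T]$ at once, so you can argue directly on the event $A_\beta$ without extracting subsequences.

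Your write-up also makes explicit two points the paper leaves implicit at this stage. First, that $\sum_k(-1)^k g_k(t)x^{N-k}$ is real-rooted: you obtain this from closedness of the set of real-rooted coefficient vectors, whereas the paper's text supports it only afterwards, through Lemma~\ref{lem:finite-convolution} and the real-rootedness of $\boxplus_N$. Second, that the $y_i$ are continuous.

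The paper's route buys a modular deterministic lemma that needs only pointwise continuity of roots. Yours is self-contained, handles the probabilistic step explicitly, and is quantitative in $\varepsilon$ and $\delta$.
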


This lemma is a direct consequence of the following dynamical version of Lemma~\ref{lem:fromektox}. We omit the proof of Lemma~\ref{lem:ektolambdak}.

\begin{lemma}\label{lem:fromektox-process}
For each $\beta>0$, let $x_1^{(\beta)}(t) \le \cdots \le x_N^{(\beta)}(t)$ be continuous functions on $C([0,T])$.
Assume that for every $k=1,\dots,N$, there is a continuous function $h_k(t)$ such that
\[
\|e_k(x_1^{(\beta)}(t),\dots,x_N^{(\beta)}(t))
   - h_k(t)\|_\infty \to 0
   \quad\text{as} \quad \beta\to\infty.
\]
Then as $\beta \to \infty$, 
\[
	\|x_i^{(\beta)}(t) - x_i(t)\|_\infty \to 0, \quad i = 1, \dots, N,
\]
where $x_1(t)\le \cdots \le x_N(t)$ are the zeros of the polynomial 
\[
	 \sum_{k=0}^N (-1)^k h_k(t) x^{N-k}.
\]
\end{lemma}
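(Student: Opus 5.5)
The plan is to reduce the statement to a uniform version of the continuity of roots, using compactness of $[0,T]$. Set $h^{(\beta)}(t) = (e_1(x^{(\beta)}(t)),\dots,e_N(x^{(\beta)}(t)))$ and $h(t) = (h_1(t),\dots,h_N(t))$. First I will check that the limit polynomial $\sum_k (-1)^k h_k(t) x^{N-k}$ is real rooted for every $t$. Fix $t$ and apply Lemma~\ref{lem:fromektox} along $\beta\to\infty$. Since the coefficients of the real-rooted polynomials $\prod_i(x-x_i^{(\beta)}(t))$ converge to those of the limit polynomial, the limit is a monic polynomial whose roots are limits of real numbers, so all its roots are real. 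Thus $x_1(t)\le\cdots\le x_N(t)$ is well defined, and pointwise convergence $x_i^{(\beta)}(t)\to x_i(t)$ holds for each fixed $t$.

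The key tool will be the map $\Phi:\R^N\supset W\to\R^N$ sending the ordered roots $(u_1\le\cdots\le u_N)$ to the coefficient vector $(e_1(u),\dots,e_N(u))$. I will show that its inverse is continuous on the image $\Phi(W)$, which is the set of coefficient vectors of real-rooted monic polynomials. This inverse continuity is exactly the content of \cite{Cucker-GonzalezCorbalan-1989} as used in Lemma~\ref{lem:fromektox}, namely that ordered roots depend continuously on the coefficients. As a consequence, $x_i(t)=\Phi^{-1}(h(t))_i$ is continuous in $t$, because $h$ is continuous. I will also need uniform continuity of $\Phi^{-1}$ on compact subsets of $\Phi(W)$. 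For this I use that $\Phi(W)$ is closed: roots are bounded by $1+\max_k|c_k|$, and a limit of real-rooted monic polynomials is real rooted. So a bounded closed set $\Phi(W)\cap\{|c|\le R\}$ is compact, and $\Phi^{-1}$ is uniformly continuous there.

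With this in hand, the uniform convergence follows. Let $R = 1+\sup_t|h(t)|$. By the hypothesis, $\sup_t |h^{(\beta)}(t)-h(t)|\to 0$, so for large $\beta$ all the points $h^{(\beta)}(t)$ and $h(t)$, $t\in[0,T]$, lie in the compact set $K=\Phi(W)\cap\{|c|\le R\}$. Given $\varepsilon>0$, uniform continuity of $\Phi^{-1}$ on $K$ gives a $\delta>0$ such that $|c-c'|<\delta$ with $c,c'\in K$ implies $|\Phi^{-1}(c)-\Phi^{-1}(c')|<\varepsilon$. Choosing $\beta$ large so that $\|h^{(\beta)}-h\|_\infty<\delta$ then gives $\|x_i^{(\beta)}-x_i\|_\infty<\varepsilon$ for every $i$.

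The main obstacle is justifying the continuity of the ordered-root map $\Phi^{-1}$ rigorously. If I do not want to cite it directly, I would argue by sequences. Suppose $c^{(n)}\to c$ in $\Phi(W)$. Then the roots $u^{(n)}$ are bounded, so some subsequence converges to a limit $u$. By continuity of $\Phi$, $\Phi(u)=c$, and the limit $u$ is still ordered. Since ordered roots are unique, $u=\Phi^{-1}(c)$, and because every subsequence has this same limit, the whole sequence converges. The same argument also proves closedness of $\Phi(W)$, which gives the compactness used above.
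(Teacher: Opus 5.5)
Your argument is correct, but it is organized differently from the paper's.

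\textbf{The paper's route.} The paper reduces the uniform statement to a continuous-convergence criterion. It shows that for every $t\in[0,T]$ and every $t_\beta\to t$ one has $x_i^{(\beta)}(t_\beta)\to x_i(t)$. This follows at once from Lemma~\ref{lem:fromektox}, because uniform convergence together with continuity of $h_k$ gives $e_k(x_1^{(\beta)}(t_\beta),\dots,x_N^{(\beta)}(t_\beta))\to h_k(t)$. The step from this criterion back to uniform convergence is left implicit. It needs compactness of $[0,T]$, to extract a convergent subsequence of ``bad'' times, and continuity of the limits $x_i(t)$, which itself comes from continuity of roots and of $h$.

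\textbf{Your route.} You put the compactness in coefficient space instead. Closedness of the real-rooted locus $\Phi(W)$ and the Cauchy root bound make $K$ compact. So the ordered-root map $\Phi^{-1}$ has a single modulus of continuity on $K$, which turns $\|h^{(\beta)}-h\|_\infty<\delta$ directly into $\|x_i^{(\beta)}-x_i\|_\infty<\varepsilon$ for all $t$ at once.

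\textbf{What each buys.} The paper's route is shorter and reuses the static lemma verbatim. Yours is more self-contained: it explicitly establishes that the limit polynomial is real rooted for every $t$ and that each $x_i(t)$ is continuous. It is also slightly more general. The uniform estimate only uses $\sup_t|h(t)|<\infty$ and never uses compactness of the time interval, so it works over any index set on which $h$ is bounded.

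One small wording fix in your last paragraph: the conclusion should be that every subsequence has a further subsequence converging to $\Phi^{-1}(c)$, which is what forces convergence of the whole sequence.
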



\begin{proof}
It suffices to show that:
``for any 
$t \in [0,T]$, 
for any sequence 
$ \{t_{\beta}\}$
such that
$t_{\beta} 
\stackrel{\beta \to \infty}{\to} 
t$, 
we have 
$x_i^{(\beta)}(t_{\beta})
\stackrel{\beta \to \infty}{\to}
x_i(t)$". Take and fix such 
$t \in [0, T]$, 
and the sequence
$\{t_{\beta}\}$.
By the uniform convergence assumption, it holds that 
\[
e_k(x_1^{(\beta)}(t_{\beta}), \dots, x_N^{(\beta)}(t_{\beta}))
\to
h_k(t), \quad k = 1, \dots, N.
\]
Then Lemma 2.3 implies that
$x_i^{(\beta)}(t_{\beta})
\stackrel{\beta \to \infty}{\to}
x_i(t)$. The proof is complete.
\end{proof}

\begin{lemma}\label{lem:finite-convolution}
	The limiting processes $y_1(t), \dots, y_N(t)$ in Lemma~{\rm\ref{lem:ektolambdak}} are expressed as
\[
	(y_1(t), \dots, y_N(t)) = (a_1, \dots, a_N) \boxplus_N \sqrt t (z_{1,N}^{(H)}, \dots,  z_{N,N}^{(H)}).
\]
\end{lemma}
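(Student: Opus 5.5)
By Definition~\ref{defn:FFC}, it suffices to check that for every $k=1,\dots,N$,
\[
	g_k(t) = \sum_{i+j=k} \frac{(N-i)!(N-j)!}{N!(N-k)!}\, e_i(a_1,\dots,a_N)\, e_j\bigl(\sqrt t z_{1,N}^{(H)},\dots,\sqrt t z_{N,N}^{(H)}\bigr),
\]
where $g_k$ is defined by \eqref{ODE-gk}. Since $e_j(\sqrt t z^{(H)}) = t^{j/2} e_j(z^{(H)})$, I first compute the elementary symmetric polynomials of the Hermite zeros. The probabilist's Hermite polynomial has the explicit expansion
\[
	H_N(x) = \sum_{m=0}^{\lfloor N/2\rfloor} \frac{(-1)^m N!}{m!\,2^m (N-2m)!} x^{N-2m}.
\]
Comparing with $\sum_j (-1)^j e_j(z^{(H)}) x^{N-j}$ gives $e_j(z^{(H)})=0$ for odd $j$ and
\[
	e_{2m}(z^{(H)}) = (-1)^m \frac{N!}{m!\,2^m (N-2m)!}.
\]
Alternatively, this follows from the zero-initial-condition case of Lemma~\ref{lem:ektogk} itself: there $g_k(t)=t^{k/2}e_k(z^{(H)})$, and the recursion \eqref{ODE-gk} forces these values. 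This is a useful consistency check.

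Substituting into the right-hand side, define
\[
	\tilde g_k(t) := \sum_{m=0}^{\lfloor k/2\rfloor} \frac{(N-k+2m)!\,(N-2m)!}{N!\,(N-k)!}\, e_{k-2m}(a)\, (-1)^m \frac{N!}{m!\,2^m (N-2m)!}\, t^m,
\]
which simplifies to
\[
	\tilde g_k(t) = \sum_{m} \frac{(-1)^m (N-k+2m)!}{m!\,2^m (N-k)!}\, e_{k-2m}(a)\, t^m.
\]
The goal is to show $\tilde g_k = g_k$. Because \eqref{ODE-gk} determines $g_k$ uniquely from $g_{k-2}$ and the initial value, it is enough to check three things: $\tilde g_0=1$, $\tilde g_1=e_1(a)$, $\tilde g_k(0)=e_k(a)$, and $\tilde g_k' = -\frac{(N-k+1)(N-k+2)}{2}\tilde g_{k-2}$. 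The first three are immediate, since only the $m=0$ term survives.

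For the derivative, differentiating termwise gives
\[
	\tilde g_k'(t) = \sum_{m\ge1} \frac{(-1)^m (N-k+2m)!}{(m-1)!\,2^m (N-k)!}\, e_{k-2m}(a)\, t^{m-1}.
\]
Reindex with $m=m'+1$ and pull out the factor $-\frac{(N-k+2)(N-k+1)}{2}$, using $\frac{(N-k+2)!}{(N-k)!}=(N-k+1)(N-k+2)$. What remains is
\[
	\sum_{m'} \frac{(-1)^{m'} (N-(k-2)+2m')!}{m'!\,2^{m'} (N-(k-2))!}\, e_{(k-2)-2m'}(a)\, t^{m'} = \tilde g_{k-2}(t),
\]
which is exactly the required recursion.

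The computation is routine; the only real point of care is the factorial bookkeeping in this reindexing step. Once $g_k=\tilde g_k$ for all $k$, the limits $y_i(t)$ of Lemma~\ref{lem:ektolambdak} are the ordered roots of $\sum_k(-1)^k g_k(t)x^{N-k}$. This polynomial is precisely $p\boxplus_N q_t$ with $q_t(x)=\prod_i(x-\sqrt t z_{i,N}^{(H)})$. Its roots are real, either by the real-rootedness of $\boxplus_N$ or because the $y_i$ arise as limits of real $\lambda_i^{(\beta)}$. Hence $(y_1(t),\dots,y_N(t))$ is the finite free convolution claimed in the statement.
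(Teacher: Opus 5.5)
Your proposal is correct and follows essentially the same route as the paper: you substitute the explicit Hermite coefficients $e_{2m}(z^{(H)})$ into the convolution formula, obtain the same closed form for $e_k$ of the convolution, and check that it satisfies the same ODE~\eqref{ODEsgk} with the same initial values as $g_k$. The paper's separate zero-initial-condition step is just the special case $a=0$ of this computation, and your explicit reindexing fills in the derivative step that the paper states in one line.
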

\begin{proof}
Let us first consider the zero initial condition, that is, $a_i = 0, i = 1, \dots, N$. Then the limits $g_k(t)$ can be explicitly calculated as 
\[
	g_0(t) = 1, \quad g_1(t) = 0,\quad  g_{2m+1}(t) = 0, 
\]
and 
\[
	g_{2m}(t) =  t^m \frac{(-1)^m}{2^m} \frac{N!}{m!(N-2m)!}.
\]
Then by definition, $y_1(t) \le \cdots \le y_N(t)$ are the zeros of the polynomial 
\[
	\sum_{2m \le N}  t^m \frac{(-1)^m}{2^m} \frac{N!}{m!(N-2m)!} x^{N-2m}.
\]
Note that the above sum at $t=1$ is exactly the $N$th probabilist's Hermite polynomial $H_N(x)$ (see Example~\ref{ex:Hermite}).  
We conclude that
\[
	(y_1(t), \dots, y_N(t)) = \sqrt t (z_{1,N}^{(H)}, z_{2,N}^{(H)},  \dots,  z_{N,N}^{(H)}).
\]

For general initial condition,
	let 
\[	
	(c_1(t), \dots, c_N(t)) = (a_1, \dots, a_N) \boxplus_N \sqrt t (z_{1,N}^{(H)}, z_{2,N}^{(H)},  \dots, z_{N,N}^{(H)}).
\]
By definition of the finite free convolution, 
\begin{align*}
	e_k(c_1(t), \dots, c_N(t)) &= \sum_{i=0}^k \frac{(N-i)!(N+i-k)!}{N!(N-k)!} e_{k-i}(a) e_i(z) t^{\frac i2}\\
	&= \sum_{0 \le m \le k/2}\frac{(N-2m)!(N+2m-k)!}{N!(N-k)!} e_{k-2m}(a) \frac{(-1)^m}{2^m m!} \frac{N!}{(N-2m)!}t^m\\
	&=\sum_{0 \le m \le k/2} \frac{(-1)^m}{2^m m!} \frac{(N+2m-k)!}{(N-k)!} e_{k-2m}(a)  t^m.
\end{align*}
Here for simplicity, $e_k(a)$ and $e_k(z)$ stand for $e_k(a_1, \dots, a_N)$ and $e_k(z_{1, N}^{(H)}, \dots, z_{N, N}^{(H)})$, respectively.
Take the derivative of $e_k(c_1(t), \dots, c_N(t))$, we get
\begin{align*}
	\frac{d e_k(c_1(t), \dots, c_N(t))}{dt} &=\sum_{1 \le m \le k/2} \frac{(-1)^m}{2^m m!} \frac{(N+2m-k)!}{(N-k)!} e_{k-2m}(a)  m t^{m-1} \\
	&= -\frac{(N-k+1)(N-k+2)}{2} e_{k-2}(c_1(t), \dots, c_N(t)), \quad k \ge 2.
\end{align*}

Note that $e_0(c_1(t), \dots, c_N(t)) = 1$, and 
$
	e_1(c_1(t), \dots, c_N(t)) = e_1(a).
$
We conclude that $e_k(c_1(t), \dots, c_N(t))$ satisfies the same ODE~\eqref{ODEsgk} as $g_k(t)$. Thus,
\[
	e_k(c_1(t), \dots, c_N(t)) = g_k(t) = e_k(y_1(t), \dots, y_N(t)), \quad k = 1,\dots, N,
\]
implying that 
\[
	y_i(t) = c_i(t), \quad i = 1, \dots, N.
\]
The proof is complete.
\end{proof}

\begin{remark}
It was shown in \cite{Voit-Woerner-2022} that the system of ODEs~\eqref{ODEs-yi} admits a unique solution which is exactly the limiting processes $y_1(t) \le y_2(t) \le \cdots \le y_N(t)$ here. 
Now, for zeros of Hermite polynomials, it is well-known that 
\[	
	\frac{1}{N}\sum_{i=1}^N \delta_{\sqrt t z_{i, N}^{(H)} / \sqrt N} \wto sc(t),
\]
where $sc(t)$ is the semi-circle distribution with variance $t$ whose density is given by  
\[
	\frac{1}{2\pi t}\sqrt{4t - x^2}, \quad |x| \le 2 \sqrt t, 
\]
and $\wto$ denotes the weak convergence of probability measures. Thus, Theorem~1.3 in \cite{fujie2025} implies that 
\[
	\frac{1}{N} \sum_{i=1}^N \delta_{y_i(t)/\sqrt N} \wto \mu_0 \boxplus sc(t),
\]
provided that $\sum_{i=1}^N \delta_{a_i/\sqrt N} \wto \mu_0$. Here `$\boxplus$' stands for the free convolution of probability measures on the real line. This provides another approach to show Theorem 1.1 in \cite{Voit-Woerner-2022b}.
\end{remark}

\section{Limit theorems for the empirical measure processes}
\label{sect:EMP}

In this section, we use a moment method to study the limiting behavior of the empirical measure process 
\[
	\mu_t^{(\beta)} = \frac1N\sum_{i=1}^N \delta_{\lambda_i^{(\beta)}(t)}
\]
of beta Dyson's Brownian motions \eqref{eq:DBM} starting at zero in the freezing regime.
We establish both the convergence to a deterministic limit and Gaussian fluctuations
around that limit. 

For $f = f(t,x) \in C^2((0,\infty) \times \mathbb{R})$, we write 
$\partial_t f = \frac{\partial f}{\partial t}$, 
$f' = \frac{\partial f}{\partial x}$, 
and $f'' = \frac{\partial^2 f}{\partial x^2}$. The starting point of our arguments is the following formula derived by using Itô's formula (cf.\ \cite{Cepa-Lepingle-1997, NTT-2023}),  
\begin{align}\label{dmutbeta}
d\langle \mu_t^{(\beta)}, f \rangle
&= \frac{1}{N} \sum_{i=1}^N d f(t,\lambda_i^{(\beta)}(t)) \notag \\[4pt]
&= \tfrac{\sqrt{2/\beta}}{N}\sum_{i=1}^N f'(t,\lambda_i^{(\beta)}(t))\,db_i(t)
+ \frac{N}{2}\iint \frac{f'(t,x) - f'(t,y)}{x - y}\, d\mu_t^{(\beta)}(x)\, d\mu_t^{(\beta)}(y)\, dt \notag \\[4pt]
&\quad + \left\langle \mu_t^{(\beta)}, \partial_t f + 
\left( \tfrac{1}{\beta} - \tfrac{1}{2} \right) f'' \right\rangle dt.
\end{align}
Here $\bra{\mu, f}$ denotes the integral $\int f(x) d\mu(x)$ of the integrable function $f$ with respect to the measure $\mu$, and for a function $f(t,x)$ of two variables $t$ and $x$, the integral $\langle \mu_t^{(\beta)}, f \rangle$ 
is taken over $x$. 
To be more precise, the above formula holds when $\lambda_1^{(\beta)}(t), \ldots, \lambda_N^{(\beta)}(t)$ are all distinct, which occurs almost surely when $\beta \ge 1$.

\subsection{Law of large numbers}
A moment method has been developed to show the LLN for the empirical measure processes $\mu_t^{(\beta)}$ in a high temperature regime. By using almost the same arguments as used in \cite{NTT-2025}, we can establish the LLN for moment processes of $\mu_t^{(\beta)}$, which implies the LLN for the empirical measure processes themselves. Under the zero initial condition, the main result is stated as follows.

\begin{theorem}
\label{thm:2.3}
The empirical measure process $\mu^{(\beta)}_t$ converges to a deterministic probability measure-valued process $\mu_t$ in probability under the topology
of uniform convergence in $[0, T]$, where
\[
	\mu_t = \frac1N \sum_{i=1}^N \delta_{\sqrt t z_{i, N}^{(H)}}.
\]
Moreover, for any polynomial $f(t,x)$ in $t$ and $x$, as $\beta \to \infty$,
\[
\langle \mu_t^{(\beta)}, f \rangle 
\PTto
\langle \mu_t, f \rangle.
\]
In addition, $\langle \mu_t, f \rangle$ is differentiable (as a function of $t$) 
and the following relation holds:
\begin{equation}\label{dtmutf}
\partial_t \langle \mu_t, f \rangle 
= \frac{N}{2} 
\iint \frac{f'(t,x) - f'(t,y)}{x - y} 
\, d\mu_t(x) d\mu_t(y)
+ \langle \mu_t,-\tfrac{1}{2} f'' + \partial_t f \rangle. 
\end{equation}
Note that we also use the partial derivative notation $\partial_t \langle \mu_t, f \rangle$ 
to denote the derivative with respect to $t$, though the function $\langle \mu_t, f \rangle$ 
depends only on $t$.
\end{theorem}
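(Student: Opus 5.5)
We will prove Theorem~\ref{thm:2.3} by a moment method, working with the monomials $f(x)=x^n$ first and deducing the rest. Since $f(t,x)$ is a polynomial, $\langle\mu_t^{(\beta)},f\rangle$ is a finite combination of $t^m S_n^{(\beta)}(t)$, where $S_n^{(\beta)}(t)=\langle\mu_t^{(\beta)},x^n\rangle$. By Lemma~\ref{2.2}(i)--(ii), it is therefore enough to prove $S_n^{(\beta)}(t)\PTto \langle\mu_t,x^n\rangle$ for every $n\ge 0$.

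\emph{Direct route.} Under the zero initial condition, Theorem~\ref{thm:restate} gives $\lambda_i^{(\beta)}(t)\PTto \sqrt t\,z_{i,N}^{(H)}$ for each $i$, uniformly on $[0,T]$. Lemma~\ref{2.2}(ii) then gives $(\lambda_i^{(\beta)}(t))^n\PTto t^{n/2}(z_{i,N}^{(H)})^n$. Averaging over $i$ with Lemma~\ref{2.2}(i) yields the moment convergence with $\mu_t=\frac1N\sum_i\delta_{\sqrt t z_{i,N}^{(H)}}$. The atoms $\lambda_i^{(\beta)}$ converge uniformly in probability, so $\mu^{(\beta)}_t\to\mu_t$ uniformly in $t$ in any metric for weak convergence, for instance the bounded Lipschitz metric, which is bounded by $\max_i\|\lambda_i^{(\beta)}-\sqrt t z_{i,N}^{(H)}\|_\infty$.

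\emph{Moment method (as in \cite{NTT-2025}).} Alternatively, we apply \eqref{dmutbeta} to $f=x^n$. The drift becomes
\[
\frac N2\sum_{j=0}^{n-2}S_j^{(\beta)}S_{n-2-j}^{(\beta)}+\Bigl(\frac1\beta-\frac12\Bigr)n(n-1)S_{n-2}^{(\beta)}.
\]
The martingale term has quadratic variation $\frac{2}{\beta N^2}\int_0^t\sum_i n^2(\lambda_i^{(\beta)})^{2n-2}ds$. We control its expectation by a bound $\Ex[S_{2n-2}^{(\beta)}(t)]\le C_{n-1,T}$ uniform in $\beta\ge1$; this is also the estimate promised in the previous section. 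To get it, we take expectations in the It\^o formula for the even moments. The drift of $S_{2m}$ involves products $S_jS_{2m-2-j}$, and we bound these by $S_{2m-2}$ times a constant (using $|S_j|\le S_{2m-2}^{j/(2m-2)}$ and the fact that $\mu$ is a probability measure). Induction on $m$ and Gronwall then give uniform bounds. Doob's inequality shows the martingale part tends to $0$ uniformly. An induction on $n$ using Lemma~\ref{2.2}(i)--(iii) then gives $S_n^{(\beta)}\PTto m_n(t)$, where $m_0=1$ and $m_n$ is defined recursively by the integrated limiting drift. Comparing with the direct route identifies $m_n(t)=\langle\mu_t,x^n\rangle$.

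\emph{Relation \eqref{dtmutf}.} Integrating the It\^o formula \eqref{dmutbeta} for general polynomial $f$ gives
\[
\langle\mu^{(\beta)}_t,f\rangle=\langle\mu^{(\beta)}_0,f\rangle+M_t^{(\beta)}+\int_0^t\Phi^{(\beta)}(s)\,ds.
\]
Here $\Phi^{(\beta)}$ is the drift; since $(f'(x)-f'(y))/(x-y)$ is a polynomial in $x,y$, $\Phi^{(\beta)}$ is a polynomial in $s$ and the moments $S_j^{(\beta)}(s)$. Letting $\beta\to\infty$ with Lemma~\ref{2.2} and $M^{(\beta)}\PTto0$ gives the integrated identity
\[
\langle\mu_t,f\rangle=\langle\mu_0,f\rangle+\int_0^t\Phi(s)\,ds.
\]
Here $\Phi$ is the right side of \eqref{dtmutf}, and it is continuous in $s$. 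The fundamental theorem of calculus then gives differentiability and \eqref{dtmutf}.

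\emph{Main obstacle.} The delicate point is the uniform moment bound. One issue is that the product terms grow quadratically. The other is that the It\^o formula requires distinct particles, which holds since $\beta\ge1$ eventually. We would handle both by working with expectations of even moments and closing the recursion via H\"older's inequality.
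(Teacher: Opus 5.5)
Your proof is correct, but its main line is the shortcut the paper mentions and deliberately does not take. You get the law of large numbers from Theorem~\ref{thm:restate} at zero initial data, where $(0,\dots,0)\boxplus_N b=b$. You then pass from atoms to measures via the elementary bound that the bounded Lipschitz distance between $\mu_t^{(\beta)}$ and $\mu_t$ is at most $\max_i\|\lambda_i^{(\beta)}-\sqrt t\, z_{i,N}^{(H)}\|_\infty$. The paper instead runs the moment method to the end. Its Steps 1--3 coincide with your second paragraph. It then identifies the limit by solving \eqref{mnt} as $m_n(t)=u_n t^{n/2}$, recognizing \eqref{un} as a self-convolutive recurrence, and appealing to \cite{Martin-Kearney-2010} to get $u_n=\frac1N\sum_i (z_{i,N}^{(H)})^n$. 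Finally it invokes Theorem~A.1 of \cite{Trinh-Trinh-2021} to upgrade convergence of moment processes to convergence of measure-valued processes.

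\emph{What your route buys.} It is shorter and more elementary: it needs no external identification of the recurrence and no moment-to-measure theorem, and it yields convergence of the individual atoms. Your ``comparison'' step actually \emph{proves} that the moments of Hermite zeros satisfy \eqref{un}, rather than importing that fact. There is no circularity: Theorem~\ref{thm:restate} borrows its moment bound from Section~\ref{sect:EMP}, and you establish that bound independently by your even-moment induction.

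\emph{What the paper's route buys.} It is independent of the elementary-symmetric-polynomial analysis and produces \eqref{un} intrinsically. This is what exhibits the duality with the high temperature regime (replacing $-N$ by $c$), and it sets up the moment machinery reused for Theorem~\ref{theorem2.12}. Your derivation of \eqref{dtmutf} is the paper's Step~5, carried out more carefully (integrated identity, then the fundamental theorem of calculus).

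\emph{Two small slips.} First, for $f=x^n$ the convolution part of the drift is $\frac{nN}{2}\sum_{j=0}^{n-2}S_j^{(\beta)}S_{n-2-j}^{(\beta)}$; you dropped the factor $n$. This is harmless, since you never use the constant. Second, Gronwall is unnecessary in the moment bound, because the drift of $S_{2m}^{(\beta)}$ is controlled by $S_{2m-2}^{(\beta)}$ alone, so a direct induction suffices. You should, however, localize before taking expectations of the local martingale: stop at $\tau_R=\inf\{t:\max_i|\lambda_i^{(\beta)}(t)|\ge R\}$ and then apply Fatou's lemma.
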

The LLN for empirical measure processes is a direct consequence of Theorem~\ref{thm:restate}. However, we are going to show it by a different approach, a moment method. The sketched proof is as follows.

\underline{Step 1} (Recursive relation for moment processes).
Denote by 
\[
	S_n^{(\beta)}(t) = \langle \mu_t^{(\beta)}, x^n \rangle 
= \frac{1}{N} \sum_{i=1}^N (\lambda_i^{(\beta)}(t))^n
\] 
the $n$th moment process of $\mu_t^{(\beta)}$. Formula~\eqref{dmutbeta} for $f(x) = x^n$ reads
\begin{align*}
dS_n^{(\beta)}(t)
&= \frac{n\sqrt{2/\beta}}{N} 
   \sum_{i=1}^N \lambda_i^{(\beta)}(t)^{n-1} db_i(t)
   + \frac{nN}{2} \sum_{j=0}^{n-2} 
     S_j^{(\beta)}(t) S_{n-2-j}^{(\beta)}(t)\, dt \notag \\[4pt]
&\quad + \left( \tfrac{1}{\beta} - \tfrac{1}{2} \right)
   n(n-1)\, S_{n-2}^{(\beta)}(t)\, dt, 
   \qquad (n \ge 2)
\end{align*}
or in the integral form
\begin{align}\label{Snbeta}
S_n^{(\beta)}(t) 
&=  \frac{n\sqrt{2/\beta}}{N} 
  \sum_{i=1}^N \int_0^t \lambda_i^{(\beta)}(u)^{n-1}\, db_i(u)+\frac{nN}{2} \int_0^t \sum_{j=0}^{n-2} 
   S_j^{(\beta)}(u)\, S_{n-2-j}^{(\beta)}(u)\, du \notag \\[6pt]
&\quad + \left( \frac{1}{\beta} - \frac{1}{2} \right)
   n(n-1) \int_0^t S_{n-2}^{(\beta)}(u)\, du,
   \qquad (n \ge 2).
\end{align}
Here the zero initial condition has been used.
For $n = 0,1$, it is clear that
\[
S_0^{(\beta)}(t) \equiv 1, \qquad 
S_1^{(\beta)}(t)
=\frac{\sqrt{2/\beta}}{N} 
  \sum_{i=1}^N b_i(t).
\]

\underline{Step 2} (Vanishing of the martingale parts). We also deal with the convergence of $C([0, T])$-valued random elements.
It is clear that $S_0^{(\beta)}(t) \equiv 1 \PTto 1$, and 
\[
	S_1^{(\beta)}(t) \PTto 0 \quad \text{as} \quad \beta \to \infty.
\]
The martingale parts
\[
M_n^{(\beta)}(t)
= \frac{n\sqrt{2/\beta}}{N} \sum_{i=1}^{N} 
  \int_{0}^{t} \lambda_i^{(\beta)}(u)^{n-1}\, db_i(u)
\]
are also $C([0, T])$-valued random elements. Their quadratic variations are given by 
\[
	\bra{M_n^{(\beta)}}_t = \frac{2n^2}{\beta N} \int_{0}^t \frac1N\sum_{i=1}^N \lambda_i^{(\beta)}(u)^{2(n-1)} du = \frac{2n^2}{\beta N} \int_{0}^t S_{2(n-1)}^{(\beta)}(u) du.
\]
We claim that there are constants $C_{n, T}$ depending on $n$ and $T$ such that for $t \in [0, T]$ and $\beta \ge 1$
\begin{equation}\label{S2nt}
	\Ex[S_{2n}^{(\beta)}(t)] \le C_{n, T}.
\end{equation}
We skip the proof of this claim because it is similar to the proof of equation~(25) in \cite{NTT-2025}. Then it follows from Doob's martingale inequality that 
\[
	\Prob\left(\sup_{0 \le t \le T} |M_n^{(\beta)}(t)| \ge \varepsilon\right) \le \frac{1}{\varepsilon^2} \Ex[\bra{M_n^{(\beta)}}_T] = \frac1{\beta} \frac{2n^2}{\varepsilon^2 N} \int_{0}^T \Ex[S_{2(n-1)}^{(\beta)}(u)] du \le   \frac1{\beta} \frac{2n^2}{\varepsilon^2 N} T C_{n-1, T}, 
\]
which clearly vanishes as $\beta \to \infty$. In conclusion, we have shown that 
\[
	M_n^{(\beta)}(t) \PTto 0\quad \text{as} \quad \beta \to \infty.
\]

\underline{Step 3} (The law of large numbers for moment processes). By induction, we arrive at the following LLN.
Define the functions $m_n(t)$ recursively by $m_0(t) \equiv 1, m_1(t) \equiv 0$, and for $n \ge 2$,
\begin{equation}\label{mnt}
m_n(t) = \int_0^t \left( 
\frac{nN}{2} \sum_{j=0}^{n-2} m_j(u) m_{n-2-j}(u)
- \frac{1}{2}n(n-1) m_{n-2}(u)
\right) du.
\end{equation}
Then 
\[
	S_n^{(\beta)}(t) \PTto m_n(t) \quad \text{as} \quad \beta \to \infty.
\]

\underline{Step 4} (Identifying the limiting measure processes).
By direct calculation, the limiting measure processes $m_n(t)$ have the following expression 
\[
	m_n(t) = u_n t^{n/2},
\]
where  $\{u_n\}$ satisfy
\begin{equation}\label{un}
\begin{cases}
u_{2n}
 &= -(2n-1)u_{\,2n-2} + N\displaystyle\sum_{j \le (n-1)} u_{2j} u_{\,2n-2-2j},\\
u_{2n+1}&=0.
\end{cases}
\end{equation}
The expression here is similar to that in the high temperature regime (Eq.\ (10) in \cite{NTT-2023}), which is viewed as a duality between two regimes. Let $v_n = u_{2n}, n \ge 0$. Then $\{v_n\}$ satisfies a self-convolutive recurrence as in \cite{Martin-Kearney-2010}. We conclude that
\[
	u_n = \frac1N \sum_{i=1}^N (z_{i, N}^{(H)})^n,
\]
and that $\{m_n(t)\}_{n \ge 0}$ are the moment processes of the probability measure-valued process
\[
	\mu_t = \frac{1}{N} \sum_{i=1}^N \delta_{\sqrt t z_{i, N}^{(H)}}.
\]

\underline{Step 5.} The convergence of moment processes implies the convergence of the empirical measure processes (see Theorem~A.1 in \cite{Trinh-Trinh-2021}). Finally, letting $\beta \to \infty$ in  equation~\eqref{dmutbeta}, we arrive at equation~\eqref{dtmutf}.

\begin{remark}
[Remark on duality and universality] (1) \emph{Duality.}
In the high temperature regime where $N \to \infty $ and $\beta = \frac{2c}{N}$,  the empirical measure $L_N = N^{-1} \sum_{i = 1}^N \delta_{\lambda_i}$ of the following Gaussian beta ensembles 
\begin{equation*}
const \times
    \prod_{1 \le i < j \le N} |\lambda_j - \lambda_i|^{\beta}
    \prod_{l=1}^N e^{-\lambda_l^2/2},
    \qquad (\lambda_1 \le \cdots \le \lambda_N),
\end{equation*}
converges weakly to a limiting measure $\rho_c$, almost surely \cite{Allez12, Trinh-2017}. Here $c > 0$ is given. The sequence $v_n$ of moments of $\rho_c$ satisfies a self-convolutive recurrence
\begin{eqnarray*}
v_{2n}
&=&
(2n-1) v_{2n-2}
+
c
\sum_{j \le n-1}
v_{2j}
\cdot
v_{2n-2 - 2j},\quad(v_{2n+1} = 0).
\end{eqnarray*}
Formally, results in the high temperature regime are obtained from those in the freezing regime by replacing $-N$ by $c$, plus some changes of signs. The reason for such duality is argued in \cite{DS15} as follows. Let 
\[
	m_n(N, \kappa) = \Ex[\bra{L_N, x^n}], \quad N = 1, 2, \dots; \kappa = \frac\beta2.
\]
Then $m_n(N, \kappa)$ is a polynomial in $N$ so that it can be defined for any $N \in \R$, and we have a duality relation that
\[
	m_n(N, \kappa) = (-1)^n \kappa^n m_n(-\kappa N, \kappa^{-1}).
\]
Therefore,, 
\[
	\lim_{N \to \infty, \kappa = c/N} m_n(N, \kappa) = \lim_{\kappa \to 0} (-1)^n \kappa^n m_n(-c, \kappa^{-1}).
\]
The left hand side is the limit in the high temperature regime while the right hand side is the limit in the freezing regime (with the system size $N = -c$).
This provides duality results between the two regimes.

Similarly, $\rho_c$ is the spectral measure of the following infinite Jacobi matrix 
\[
	A_{c} 
	=
	\begin{pmatrix}
		0	&\sqrt {c+1}	\\
		\sqrt {c+1}	&0	&\sqrt {c+2}\\
		&	\ddots	&\ddots	&\ddots\\
	\end{pmatrix}.
\]
That matrix is obtained from $J_N$ in \eqref{Gauss-JN} by replacing $-N$ with $c$ and changing the signs \cite{DS15}.

\medskip
(2) \emph{Universality.} We have explained in Section~\ref{sect:FFC} a relation between the finite free convolution and the $c$-convolution, for $c > 0$. Moreover, both the convolutions converge to the free convolution (as $N\to \infty$ and $c\to \infty$). The limiting measure process in any regime (random matrix regime, high temperature regime and freezing regime) can be written as the corresponding convolution of the initial measure and the limiting measure process under zero initial condition. We will see more similarity when looking at the formulation of CLTs in the next section.

\end{remark}

\subsection{Central limit theorems}
By arguments similar to those used in \cite{NTT-2023}, we can establish Gaussian fluctuations for the empirical measure processes. Let us highlight important points. 
Recall that the limiting measure $\mu_1$ ($\mu_t$ at $t = 1$) is expressed as 
\[
	\mu_1 = \mu^* =  \frac1N \sum_{i=1}^N \delta_{z_{i, N}^{(H)}}.
\]
Let $\{q_i\}_{i=0}^{N-1}$ be orthogonal polynomials with respect to $\mu^*$, the duals of the first $N$ Hermite polynomials, defined by the following three-term recurrence relation 
\begin{align*}
	&q_0 = 1, q_1 = x,\\
	&q_{n+1} = x q_n - (N-n) q_{n-1}, \quad n =1, \dots, N - 2.
\end{align*}
(See Example~\ref{ex:Hermite}.)
The orthogonal relation is expressed as
\begin{align*}
\bra{q_m, q_n}_{\mu^*} &:=
\int q_m(x)q_n(x)\,d\mu^*(x) \\
&= \frac1N \sum_{i=1}^N q_m(z_{i, N}^{(H)})q_n(z_{i, N}^{(H)})
  = \delta_{mn} \prod_{i=1}^{n} (N-i).
\end{align*}
By definition, $q_n$ is an odd polynomial (resp.\ even polynomial), if $n$ is odd (resp.\ even).
Let $Q_n$ be the primitive of $q_n$ with zero constant term. Define
\[
\widetilde{Q}_n(t,x) = t^{(n+1)/2} Q_n\!\left(\frac{x}{\sqrt{t}}\right).
\]
Then $\widetilde{Q}_n(t,x)$ is a polynomial in $t$ and $x$. The result on Gaussian fluctuations is stated as follows.

\begin{theorem}\label{theorem2.12}
As $\beta \to \infty$, the random processes
\[
\left\{
\sqrt{\beta N /2}\big( \langle \mu_t^{(\beta)}, \widetilde{Q}_n \rangle
- \langle \mu_t, \widetilde{Q}_n \rangle \big)
\right\}_{n=0}^{N-1}
\]
converge jointly in distribution  to independent centered Gaussian processes
$
\{\widetilde{\eta}_n(t)\}_{n=0}^{N-1},
$
with covariance given by

\begin{equation}\label{t25}
\mathbb{E}\!\left[\widetilde{\eta}_m(s)\widetilde{\eta}_n(t)\right]
= \delta_{mn}\,
\frac{\langle q_n, q_n \rangle_{\mu^*}}{n+1}\,
(s \wedge t)^{\,n+1}.
\end{equation}
\end{theorem}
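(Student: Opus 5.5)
The plan is to apply the It\^o formula~\eqref{dmutbeta} to the test functions $f=\widetilde Q_n(t,x)$. The aim is to show that, after centering at $\langle \mu_t,\widetilde Q_n\rangle$ and multiplying by $\sqrt{\beta N/2}$, the drift contributes nothing in the limit. The fluctuation process then reduces, up to terms that are $o(1)$ in probability uniformly on $[0,T]$, to the martingale
\[
\mathcal M_n^{(\beta)}(t)=\frac{1}{\sqrt N}\sum_{i=1}^N\int_0^t \widetilde q_n(u,\lambda_i^{(\beta)}(u))\,db_i(u),\qquad \widetilde q_n(t,x):=\partial_x\widetilde Q_n(t,x)=t^{n/2}q_n(x/\sqrt t).
\]
The prefactor is right because $\sqrt{\beta N/2}\cdot\sqrt{2/\beta}/N=1/\sqrt N$. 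The joint convergence will then follow from the martingale CLT (convergence of quadratic covariations to a deterministic limit implies convergence to a Gaussian process with independent increments).

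\textbf{Covariance.} The covariation is
\[
\langle \mathcal M_m^{(\beta)},\mathcal M_n^{(\beta)}\rangle_t=\int_0^t\langle\mu_u^{(\beta)},\widetilde q_m\widetilde q_n\rangle\,du .
\]
By Theorem~\ref{thm:2.3} applied to the polynomial $\widetilde q_m\widetilde q_n$, this converges uniformly in probability to $\int_0^t\langle\mu_u,\widetilde q_m\widetilde q_n\rangle\,du$. Since $\mu_u$ is the image of $\mu^*$ under $x\mapsto\sqrt u\,x$, we have $\langle\mu_u,\widetilde q_m\widetilde q_n\rangle=u^{(m+n)/2}\langle q_m,q_n\rangle_{\mu^*}$. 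By orthogonality this equals $\delta_{mn}u^n\langle q_n,q_n\rangle_{\mu^*}$, and integrating gives exactly~\eqref{t25}. Independence of the components follows from the covariance vanishing for $m\ne n$.

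\textbf{Drift.} Write $f=\widetilde Q_n$ and $Df(x,y)=(f'(x)-f'(y))/(x-y)$. Subtracting~\eqref{dtmutf} from~\eqref{dmutbeta}, the drift of the centered process splits into three pieces.
\begin{enumerate}[label=(\alph*)]
\item A term $\frac1\beta\langle\mu_t^{(\beta)},f''\rangle$. After multiplying by $\sqrt\beta$ it is $O(\beta^{-1/2})$, using the moment bound~\eqref{S2nt}.
\item A quadratic term $\frac N2\iint Df\,d(\mu_t^{(\beta)}-\mu_t)^{\otimes2}$. Scaled by $\sqrt\beta$, this is a product of two $O_P(\beta^{-1/2})$ moment fluctuations. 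That order is obtained first, crudely, from~\eqref{Snbeta} by induction on the moment degree: martingale parts are $O_P(\beta^{-1/2})$ by Doob's inequality, and Gronwall-type propagation handles the drift. So this term is $o_P(1)$.
\item The linear term $\langle\mu_t^{(\beta)}-\mu_t,\,\mathcal L_tf\rangle$, where
\[
\mathcal L_tf(x)=N\int Df(x,y)\,d\mu_t(y)+\partial_tf(t,x)-\tfrac12f''(t,x).
\]
\end{enumerate}

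\textbf{Main obstacle.} The crux is showing that $\mathcal L_t\widetilde Q_n\equiv0$ as a polynomial in $x$, which is exactly why the $\widetilde Q_n$ are chosen. By the scaling $\widetilde Q_n(t,x)=t^{(n+1)/2}Q_n(x/\sqrt t)$ and $\mu_t=(\sqrt t\,\cdot)_\#\mu^*$, every term of $\mathcal L_t\widetilde Q_n$ is homogeneous of the same degree. The identity therefore reduces to $t=1$:
\[
N\int\frac{q_n(x)-q_n(y)}{x-y}\,d\mu^*(y)+\tfrac{n+1}{2}Q_n(x)-\tfrac12 xq_n(x)-\tfrac12 q_n'(x)=0 .
\]
I would prove this by induction on $n$ using the three-term recurrence $q_{n+1}=xq_n-(N-n)q_{n-1}$. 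The first term is the associated (numerator) polynomial of $\mu^*$, which satisfies the same recurrence with shifted initial data, so the induction should close. The fallback, as in \cite{NTT-2023}, is to compare with the moment recursion~\eqref{un} directly. With the drift reduced to $o_P(1)$ uniformly on $[0,T]$, the theorem follows.
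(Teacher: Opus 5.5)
\textbf{Verdict: the strategy is right, but the identity you call the crux is false as stated; it is repairable.}

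The paper itself gives no argument and only defers to the moment method of \cite{NTT-2023}. Your outline is the natural way to carry that out, and most of it is sound:
\begin{itemize}
\item It\^o's formula applied to $\widetilde Q_n$, with the martingale part carrying the prefactor $1/\sqrt N$.
\item The bracket of that martingale handled by Theorem~\ref{thm:2.3} together with the orthogonality of the $q_n$ under $\mu^*$.
\item The error terms (a) and (b). For (b), the system \eqref{Snbeta} is triangular, so plain induction on the degree gives the uniform $O_P(\beta^{-1/2})$ bounds; no Gronwall argument is needed.
\end{itemize}

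\textbf{The false step.} With the paper's normalization, where $Q_n$ has zero constant term, $\mathcal L_t\widetilde Q_n$ is not identically zero. Take $n=1$, so $q_1=x$ and $Q_1=x^2/2$. Your left-hand side at $t=1$ is
\[
N\int 1\,d\mu^*(y)+\frac{x^2}{2}-\frac{x^2}{2}-\frac12=N-\frac12\neq 0 .
\]
For $n=3$ one gets the nonzero constant $-N^2+3N-\frac32$. An induction aimed at $0$ therefore fails already at its base case.

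What is true, and all you need, is that $\mathcal L_t\widetilde Q_n(x)=c_n\,t^{(n-1)/2}$ does not depend on $x$. By parity, $c_n=0$ for even $n$. Term (c) then still vanishes identically, because $\mu_t^{(\beta)}$ and $\mu_t$ are both probability measures, so their difference annihilates constants.

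\textbf{How to prove the corrected identity.} Your remark that ``the induction should close'' hides the real content: the numerator-polynomial recurrence alone is not enough. Work with the $x$-derivative of your identity. Put
\[
r_n(x)=\int\frac{q_n(x)-q_n(y)}{x-y}\,d\mu^*(y).
\]
Since $\int q_n\,d\mu^*=0$ for $1\le n\le N-1$, these satisfy $r_{n+1}=xr_n-(N-n)r_{n-1}$ with $r_0=0$ and $r_1=1$. Define
\[
A_n:=Nr_n'+\tfrac n2\, q_n-\tfrac12\, xq_n'-\tfrac12\, q_n'',\qquad B_n:=Nr_n-q_n'-(N-n)q_{n-1},
\]
so that $A_n$ is exactly $\frac{d}{dx}\mathcal L_1\widetilde Q_n$.

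The recurrences for $q_n$ and $r_n$ give
\[
A_{n+1}=xA_n-(N-n)A_{n-1}+B_n .
\]
So the induction for $A_n$ does not close by itself. You also need the auxiliary identity $B_n\equiv 0$, that is, $Nr_n=q_n'+(N-n)q_{n-1}$. This follows from the same recurrences, which give
\[
B_{n+1}=xB_n-(N-n)B_{n-1},
\]
together with $B_0=B_1=0$. Combined with $A_0=A_1=0$, this yields $A_n\equiv0$ for $0\le n\le N-1$. Hence $\mathcal L_1\widetilde Q_n$ is constant in $x$, and your homogeneity argument transfers this to every $t$.

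With these two repairs, the rest of your argument goes through: the martingale CLT and the covariance computation \eqref{t25} are correct as written.
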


\begin{corollary}
For Gaussian beta ensembles~\eqref{GbE}, which coincide with the joint distribution of $\{\lambda_i^{(\beta)}(1)\}_{i=1}^N$ under zero initial condition,  
as $\beta \to \infty$, 
\[
\left \{\sqrt{\beta N/2}\Big(\langle L_N, {Q}_n \rangle - \langle \mu^*, {Q}_n \rangle\Big) \right\}_{n=0}^{N-1}
\dto \Normal_N\left(0, \diag\left(\frac{\bra{q_n, q_n}_{\mu^*}}{n+1} \right)_{n=0}^{N-1} \right),
\]
where both $\diag(a_0, \dots, a_{N-1}),$ and $\diag(a_n)_{n=0}^{N-1}$ denotes the diagonal matrix with diagonal entries $a_0, \dots, a_{N-1}$, and $\Normal_N(0, \Sigma)$ denotes the $N$-dimensional Gaussian distribution with mean zero and covariance matrix $\Sigma$.
\end{corollary}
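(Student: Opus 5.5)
The corollary follows from Theorem~\ref{theorem2.12} by specializing to the single time $t=1$. At that time the construction simplifies: $\widetilde Q_n(1,x) = 1^{(n+1)/2} Q_n(x/1) = Q_n(x)$, and $\mu_1=\mu^*$. Under the zero initial condition, the joint law of $\{\lambda_i^{(\beta)}(1)\}_{i=1}^N$ is the Gaussian beta ensemble~\eqref{GbE}, as recalled in the introduction. Hence $\langle \mu_1^{(\beta)}, \widetilde Q_n\rangle$ has the same distribution as $\langle L_N, Q_n\rangle$. This identity in law holds jointly in $n=0,\dots,N-1$, since each quantity is a deterministic function of the same eigenvalue vector.

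Next I would pass from processes to a single time. Convergence in distribution of $C([0,T])^N$-valued random elements (take $T\ge 1$) implies convergence of the finite-dimensional marginals, because evaluation at $t=1$ is continuous in the uniform topology. By the continuous mapping theorem, the vector
\[
\left\{\sqrt{\beta N/2}\bigl(\langle \mu_1^{(\beta)}, \widetilde Q_n\rangle - \langle \mu_1,\widetilde Q_n\rangle\bigr)\right\}_{n=0}^{N-1}
\]
therefore converges in distribution to $(\widetilde\eta_0(1),\dots,\widetilde\eta_{N-1}(1))$. This limit vector is centered and jointly Gaussian. By \eqref{t25} with $s=t=1$, its covariance is $\delta_{mn}\langle q_n,q_n\rangle_{\mu^*}/(n+1)$.

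Since the law is determined by the joint distribution, the statement for $L_N$ follows. The only point needing care is that Theorem~\ref{theorem2.12} is stated for the process on $[0,T]$ with the uniform topology, so evaluation at $t=1$ is legitimate once $T\ge1$. The $n=0$ component degenerates harmlessly: $Q_0(x)=x$, so $\langle L_N, Q_0\rangle$ is the mean eigenvalue, and the formula gives variance $1$. I can check this directly: $\sqrt{\beta N/2}\cdot\frac{\sqrt{2/\beta}}{N}\sum_i b_i(1)$ has variance $1$, which confirms the normalization. I expect no step to be a genuine obstacle; the content lies entirely in Theorem~\ref{theorem2.12}.
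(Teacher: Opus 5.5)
Your proposal is correct and follows the route the paper intends (the corollary is stated there without a separate proof). You evaluate Theorem~\ref{theorem2.12} at $t=1$, where $\widetilde Q_n(1,\cdot)=Q_n$ and $\mu_1=\mu^*$, and use that $\{\lambda_i^{(\beta)}(1)\}_{i=1}^N$ has the Gaussian beta ensemble law under the zero initial condition; your points on the continuity of evaluation at $t=1$ and the $n=0$ variance check via $\frac{1}{\sqrt N}\sum_i b_i(1)$ are also correct.
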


Let 
\[
	\hat q_n = q_n / \sqrt{\bra{q_n, q_n}_{\mu^*}}, \quad (n = 0, \dots, N-1),
\]
be orthonormal polynomials with respect to $\mu^*$.
Let $\hat Q_n$ be a primitive of $\hat q_n$. Then the above CLT can be re-written as 
\[
	\left\{\sqrt{\beta N/2}\Big(\langle L_N, \hat{Q}_n \rangle - \langle \mu^*, \hat{Q}_n \rangle\Big)\right\}_{n=0}^{N-1} \dto \Normal_N\left(0, \diag\left(\frac1{n+1}\right)_{n=0}^{N-1}\right).
\]

\begin{corollary}
\label{coro:G}
It holds that 
\[
	\{\sqrt{\beta /2} (\lambda_i^{(\beta)} - z_{i, N}^{(H)})\}_{i=1}^N \dto \Normal_N(0, \Sigma),
\]
where the limiting variance matrix $\Sigma$ satisfies 
\[
	\Q \Sigma \Q^{\tran} = \diag \left(\frac{1}{n+1} \right)_{n=0}^{N-1},
\]
with the orthogonal matrix 
\[
	\Q := \Big(\frac 1{\sqrt{N}} \hat q_n(z_{i, N}^{(H)}) \Big)_{n=0, \dots, N-1; i=1, \dots, N}.
\]
Consequently, for each $n = 0, \dots, N-1$, the limiting covariance matrix $\Sigma$ has a normalized eigenvector $\frac1{\sqrt N} (\hat q_n(z_{i, N}^{(H)}))_{i=1}^N$ with respect to the eigenvalue $1/(n+1)$.
\end{corollary}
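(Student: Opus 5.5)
The plan is to read Corollary~\ref{coro:G} off the preceding corollary (the rewritten version with the orthonormal primitives $\hat Q_n$) by a delta-method linearization at $t=1$. Write $z_i = z_{i,N}^{(H)}$ and $\lambda_i = \lambda_i^{(\beta)}(1)$. By Theorem~\ref{thm:restate}, $\lambda_i \Pto z_i$. The empirical quantity can be written as
\[
\langle L_N, \hat Q_n\rangle - \langle \mu^*, \hat Q_n\rangle = \frac1N\sum_{i=1}^N \bigl(\hat Q_n(\lambda_i)-\hat Q_n(z_i)\bigr).
\]
Applying Taylor's formula to the polynomial $\hat Q_n$ gives
\[
\hat Q_n(\lambda_i)-\hat Q_n(z_i) = \hat q_n(z_i)(\lambda_i - z_i) + R_{n,i},
\]
with $|R_{n,i}| \le C|\lambda_i-z_i|^2$ on the event that all $\lambda_i$ lie in a fixed compact set. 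That event has probability tending to one.

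Set $X^{(\beta)} = \bigl(\sqrt{\beta/2}(\lambda_i - z_i)\bigr)_{i=1}^N$. The identity above then becomes
\[
\sqrt{\beta N/2}\,\bigl(\langle L_N,\hat Q_n\rangle-\langle\mu^*,\hat Q_n\rangle\bigr)_{n=0}^{N-1} = \Q X^{(\beta)} + \frac{\sqrt N}{N}\sqrt{\beta/2}\,\Bigl(\sum_i R_{n,i}\Bigr)_n ,
\]
where the factor $\frac{\sqrt N}{N}\hat q_n(z_i) = \frac1{\sqrt N}\hat q_n(z_i)$ is exactly the entry $\Q_{ni}$. The matrix $\Q$ is orthogonal: its rows are orthonormal because $\frac1N\sum_i \hat q_m(z_i)\hat q_n(z_i) = \langle \hat q_m,\hat q_n\rangle_{\mu^*}=\delta_{mn}$, and it is square of size $N$. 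Hence $X^{(\beta)} = \Q^{\tran}\bigl(\text{LHS} - \text{remainder}\bigr)$. By the previous corollary the left-hand side converges in distribution to $\Normal_N\bigl(0,\diag(1/(n+1))\bigr)$.

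The main obstacle is showing the remainder vanishes. This requires tightness of $X^{(\beta)}$, i.e.\ that $\lambda_i - z_i = O_P(\beta^{-1/2})$; the remainder is then $\sqrt\beta\,O_P(\beta^{-1}) \to 0$ in probability. I would derive tightness from the same identity and avoid a separate estimate. Since $\Q$ is invertible, $\|X^{(\beta)}\| \le \|\text{LHS}\| + \|\text{rem}\|$. Moreover $\|\text{rem}\| \le C'\beta^{-1/2}\|X^{(\beta)}\|^2$ and $C'\beta^{-1/2}\|X^{(\beta)}\| = C''\max_i|\lambda_i-z_i| \Pto 0$. So with probability tending to one we get $\|\text{rem}\|\le \frac12\|X^{(\beta)}\|$, whence $\|X^{(\beta)}\|\le 2\|\text{LHS}\|$, which is tight. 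The remainder then tends to zero in probability. By Slutsky's lemma and the continuous mapping theorem, $X^{(\beta)} \dto \Q^{\tran} G$ with $G\sim\Normal_N(0,\diag(1/(n+1)))$.

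The limit is therefore $\Normal_N(0,\Sigma)$ with $\Sigma = \Q^{\tran}\diag(1/(n+1))\Q$, equivalently $\Q\Sigma\Q^{\tran}=\diag(1/(n+1))$. Multiplying $\Sigma \Q^{\tran} = \Q^{\tran}\diag(1/(n+1))$ column by column shows that the $n$th row of $\Q$, namely $\frac1{\sqrt N}(\hat q_n(z_i))_i$, is a unit eigenvector of $\Sigma$ with eigenvalue $1/(n+1)$.
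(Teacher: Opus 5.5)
Your argument is correct, but it runs in the opposite direction from the paper's. The paper takes the eigenvalue CLT $\{\sqrt{\beta/2}(\lambda_i^{(\beta)}-z_{i,N}^{(H)})\}_{i=1}^N\dto\{\eta_i\}_{i=1}^N$, with $\eta$ jointly Gaussian of some unspecified covariance $\Sigma$, as already known from the cited freezing-regime literature. It pushes this through the mean-value linearization to get $\Q\eta$ as the limit of the linear statistics, and then reads off $\Q\Sigma\Q^{\tran}=\diag(1/(n+1))$ by matching covariances with the previous corollary.

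You use only the linear-statistics CLT and invert the linearization through the orthogonality of $\Q$. This proves that the eigenvalue fluctuations have a Gaussian limit at all, and it identifies $\Sigma=\Q^{\tran}\diag(1/(n+1))\Q$ by construction. The price is the tightness bootstrap for $X^{(\beta)}$, which you handle correctly with the self-bounding estimate $\|\mathrm{rem}\|\le C'\beta^{-1/2}\|X^{(\beta)}\|^2$. So your route is self-contained given Theorem~\ref{theorem2.12} and its corollary, while the paper's is shorter but leans on an external result.

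If you want to skip the bootstrap, write the exact identity $\mathrm{LHS}=\Q^{(\beta)}X^{(\beta)}$ using divided differences: $\Q^{(\beta)}_{ni}=N^{-1/2}\bigl(\hat Q_n(\lambda_i)-\hat Q_n(z_i)\bigr)/(\lambda_i-z_i)$, set equal to $N^{-1/2}\hat q_n(z_i)$ when $\lambda_i=z_i$. Since $\Q^{(\beta)}\Pto\Q$ and $\Q$ is invertible, $(\Q^{(\beta)})^{-1}\Pto\Q^{\tran}$ on an event of probability tending to one. Slutsky then gives $X^{(\beta)}\dto\Q^{\tran}G$ directly.
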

\begin{proof}
Note that random vectors $(\lambda_i^{(\beta)})_{i=1}^N$ satisfy the following LLN and CLT:
\[
	(\lambda_i^{(\beta)})_{i=1}^N \Pto (z_{i, N}^{(H)})_{i=1}^N,
\]
\[
	\{\sqrt{\beta /2} (\lambda_i^{(\beta)} - z_{i, N})\}_{i=1}^N \dto \{\eta_i\}_{i=1}^N,
\]
where $\{\eta_i\}_{i=1}^N$ are jointly Gaussian random variables with mean zero and covariance matrix $\Sigma$. 

Let us express
\begin{align*}
\sqrt{\beta N/2}\left(\langle L_N, \hat{Q}_n \rangle - \langle \mu^*, \hat{Q}_n \rangle\right) &= \frac1{\sqrt N} \sum_{i=1}^N \sqrt {\beta /2} \left(\hat Q_n(\lambda_i^{(\beta)}) - \hat Q_n(z_{i,N}^{(H)})\right) \\
&=\frac1{\sqrt N} \sum_{i=1}^N  \hat q_n(\gamma_{i}^{(\beta)}) \sqrt {\beta /2}\left(\lambda_i^{(\beta)} - z_{i,N}^{(H)}\right),
\end{align*}
for some $\gamma_i^{(\beta)}$ between $\lambda_i^{(\beta)}$ and $z_{i, N}^{(H)}$, by applying the mean value theorem. Then the above LLN and CLT imply that 
\[
	\sqrt{\beta N/2}\left(\langle L_N, \hat{Q}_n \rangle - \langle \mu^*, \hat{Q}_n \rangle\right)  \dto \frac1{\sqrt N} \sum_{i=1}^N  \hat q_n(z_{i,N}^{(H)})\eta_i.
\]
The joint convergence also holds. Consequently, as column vectors, 
\[
	\left\{\sqrt{\beta N/2}\left(\langle L_N, \hat{Q}_n \rangle - \langle \mu^*, \hat{Q}_n \rangle\right)\right\}_{n=0}^{N-1} \dto  \Q \eta, \quad \eta = (\eta_1, \dots, \eta_N)^\tran.
\]
Therefore, the covariance matrix $\Q \Sigma \Q^\tran$ of $\Q \eta$ coincides with $\diag(\frac1{n+1})_{n=0}^{N-1}$, completing the proof.
\end{proof}

\section{Beta Laguerre ensembles and beta Laguerre processes}
\label{sect:Laguerre}

\subsection{Law of large numbers for beta Laguerre ensembles}
Consider beta Laguerre ensembles which are generalizations of Wishart matrices and Laguerre matrices with the following joint density 
\begin{equation}\label{bLE}
	\frac{1}{Z_{N, \alpha}^{(\beta)}}\prod_{i < j}|\lambda_j - \lambda_i|^\beta \prod_{i = 1}^N \left( \lambda_i^{\frac{\beta}{2}\alpha - 1} e^{-\frac{\beta}2  \lambda_i} \right), \quad (0 < \lambda_1 < \cdots < \lambda_N),
\end{equation}
where $\alpha > 0$ and $\beta > 0$ are two parameters, and ${Z_{N, \alpha}^{(\beta)}}$ is the normalizing constant. 
They are realized as the eigenvalues of the following tridiagonal matrix \cite{DE02}
\[
	L_{N,\beta} = (B_N^{(\beta)})^\tran B_N^{(\beta)},
\]
with bidiagonal matrix $B_N^{(\beta)}$ consisting of independent random variables distributed as follows
\[
	B_N^{(\beta)} = \frac{1}{\sqrt{\beta}} \begin{pmatrix}
		\chi_{\beta (\alpha + N - 1)}	\\
		\chi_{\beta (N-1)}	&\chi_{\beta(\alpha + N - 2)}	\\
		&\ddots	&\ddots\\
		&&\chi_\beta	&\chi_{\beta \alpha}
	\end{pmatrix}.
\]

When $N$ and $\alpha > 0$ are fixed, as $\beta \to \infty$,
\begin{align*}
	B_N^{(\beta)} 
	&\Pto
	\begin{pmatrix}
		\sqrt{\alpha + N - 1}	\\
		\sqrt{N-1}	&\sqrt{\alpha + N - 2}	\\
		&\ddots	&\ddots\\
		&&1	&\sqrt{\alpha}
	\end{pmatrix} =: B_N^{(\infty)}.
\end{align*}
Consequently, in the freezing regime where $N$ and $\alpha$ are fixed, and $\beta \to \infty$, the eigenvalues $(\lambda_1, \dots, \lambda_N)$ converge in probability to the eigenvalues of the deterministic tridiagonal matrix 
\begin{equation}\label{JNL}
	J^{(L)}_{N} := (B_N^{(\infty)})^\tran (B_N^{(\infty)}).
\end{equation}
Those deterministic eigenvalues turn out to be the zeros of the $N$th Laguerre polynomial $L_N^{(\alpha)}(x)$. Here Laguerre polynomials $\{L_n^{(\alpha)}(x)\}_{n \ge 0}$ are monic polynomials orthogonal with respect to the weight $x^{\alpha - 1} e^{-x}, x > 0$ (see Example~\ref{ex:Laguerre}). These arguments lead to the following LLN.
\begin{theorem}
	In the freezing regime, the eigenvalues of beta Laguerre ensembles~\eqref{bLE} converge in probability to the zeros of the $N$th Laguerre polynomial $L_N^{(\alpha)}(x)$. 
\end{theorem}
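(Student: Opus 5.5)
The plan is to follow the Gaussian argument given at the start of the paper, now using the bidiagonal model of Dumitriu--Edelman. The proof has three steps: show the random entries freeze, pass the convergence to eigenvalues, and identify the limiting characteristic polynomial.

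\textbf{Step 1: the entries of $B_N^{(\beta)}$ converge.} Each entry has the form $\chi_{\beta m}/\sqrt\beta$ with $m>0$ fixed (here $m=\alpha+N-i$ or $m=N-i$). Since $\chi_{\beta m}^2$ has mean $\beta m$ and variance $2\beta m$, we get
\[
\Ex\Big[\big(\chi^2_{\beta m}/\beta - m\big)^2\Big] = 2m/\beta \to 0 .
\]
So $\chi_{\beta m}^2/\beta \Pto m$, and by continuity of the square root, $\chi_{\beta m}/\sqrt\beta \Pto \sqrt m$. There are finitely many independent entries, so $B_N^{(\beta)} \Pto B_N^{(\infty)}$ entrywise. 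By continuity of matrix multiplication, $L_{N,\beta} \Pto J_N^{(L)}$.

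\textbf{Step 2: the eigenvalues converge.} The coefficients of $\det(x-L_{N,\beta})$ are polynomials in the matrix entries, so they converge in probability to those of $\det(x-J_N^{(L)})$. The continuity of roots used in Lemma~\ref{lem:fromektox} gives convergence of the ordered eigenvalues. To get convergence in probability, argue along subsequences: every subsequence has a further subsequence converging almost surely. Alternatively, use Weyl's inequality for symmetric matrices, $|\lambda_i(A)-\lambda_i(B)|\le\|A-B\|$, which gives the result directly.

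\textbf{Step 3: identify the limit as $L_N^{(\alpha)}$.} This is the only step with real content. I would write $J_N^{(L)}$ explicitly as a symmetric Jacobi matrix. Taking the bidiagonal with diagonal $d_i=\sqrt{\alpha+N-i}$ and subdiagonal $s_i=\sqrt{N-i}$, the product $(B_N^{(\infty)})^\tran B_N^{(\infty)}$ has entries:
\begin{itemize}
\item diagonal: $d_i^2+s_i^2 = \alpha+2N-2i$;
\item off-diagonal: $d_{i+1}s_i=\sqrt{(N-i)(\alpha+N-i-1)}$.
\end{itemize}
Reversing the index order with $j=N-i$, the diagonal becomes $\alpha+2j$ for $j=0,\dots,N-1$, and the squared off-diagonals become $j(\alpha+j-1)$ for $j=1,\dots,N-1$.

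The characteristic polynomials $P_n$ of the leading $n\times n$ blocks (in the reversed order) satisfy, by cofactor expansion,
\[
P_{n+1}(x)=(x-\alpha-2n)P_n(x)-n(\alpha+n-1)P_{n-1}(x).
\]
This is the three-term recurrence of the monic Laguerre polynomials orthogonal with respect to $x^{\alpha-1}e^{-x}$, whose Jacobi parameters are $\alpha+2n$ and $n(\alpha+n-1)$. Since the initial conditions agree ($P_0=1$, $P_1=x-\alpha$), we get $P_N=L_N^{(\alpha)}$. The ordered eigenvalues of $J_N^{(L)}$ are therefore the zeros of $L_N^{(\alpha)}$. The Jacobi parameters should be checked against the definition in Example~\ref{ex:Laguerre}.

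The main obstacle is this index bookkeeping: getting the product entries in the right order and matching the recurrence coefficients with the $x^{\alpha-1}e^{-x}$ normalization. If the indices are reversed the wrong way, the recurrence comes out wrong; so I would verify the case $N=2$ by hand. There the diagonal entries are $\alpha+2$ and $\alpha$, and the squared off-diagonal entry is $\alpha$, giving characteristic polynomial $x^2-(2\alpha+2)x+\alpha(\alpha+1)$. This matches $L_2^{(\alpha)}$.
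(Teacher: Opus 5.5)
Your proposal is correct and follows the paper's route: freeze the Dumitriu--Edelman bidiagonal model, $B_N^{(\beta)} \Pto B_N^{(\infty)}$, pass to the eigenvalues by continuity, and identify the spectrum of $J_N^{(L)}=(B_N^{(\infty)})^\tran B_N^{(\infty)}$ with the zeros of $L_N^{(\alpha)}$ through the Laguerre three-term recurrence. The paper makes that last identification in Example~\ref{ex:Laguerre} by recognizing $J_N^{(L)}$ as the reversed (dual) Jacobi matrix $J^*_{\alpha,N}$; your computation of $B^\tran B$ followed by the index reversal is the same identification, and you supply the Chebyshev and Weyl-inequality details that the paper only asserts.
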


\subsection{Law of large numbers for beta Laguerre processes}
The so-called beta Laguerre processes $0 \le \lambda_1(t) \le \lambda_2(t) \le \cdots \le \lambda_N(t)$ satisfy the following system of 
SDEs
\begin{equation}\label{SDEs-L}
	d\lambda_i(t)
= \frac2{\sqrt\beta} \sqrt{{\lambda_i(t)}}\, db_i(t)
   + \alpha\, dt
   + \displaystyle\sum_{j \neq i}
     \frac{2\lambda_i(t)}{\lambda_i(t) - \lambda_j(t)}\, dt,
\end{equation}
with initial condition $0 \le a_1 \le a_2 \le \cdots \le a_N$. The existence and uniqueness of the strong solution to the above SDEs have been shown in \cite{Graczyk-Jacek-2014} when $\beta \ge 1$. In this case, the eigenvalue processes are non-colliding at any positive time. For more general $\beta>0$, beta Laguerre processes are defined to be the squared of type B radial Dunkl processes \cite{Demni-2007-arxiv}. The very first result in the freezing regime is stated as follows.

\begin{theorem}
\label{thm:LLN-L1}
	Let $\alpha > 0$ and $N\ge 2$ be fixed. Then as $\beta \to \infty$, 
\[
	\lambda_i(t) \PTto x_i(t), \quad i = 1, \dots, N,
\]
where the deterministic limiting processes $x_1(t),\dots, x_N(t)$ depend on the parameter $\alpha > 0$.
\end{theorem}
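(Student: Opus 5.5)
We mimic the argument for Theorem~\ref{thm:restate} and study the elementary symmetric polynomials
\[
e_k^{(\beta)}(t) := e_k(\lambda_1(t), \dots, \lambda_N(t)), \quad k = 0, \dots, N .
\]

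\textbf{Step 1: It\^o's formula.} Since $\partial_i^2 e_k = 0$, It\^o's formula applied to \eqref{SDEs-L} gives
\[
de_k^{(\beta)} = \frac{2}{\sqrt\beta}\sum_i \partial_i e_k \sqrt{\lambda_i}\, db_i + \alpha \sum_i \partial_i e_k\, dt + 2\sum_i \sum_{j\ne i} \frac{\lambda_i\, \partial_i e_k}{\lambda_i-\lambda_j}\, dt .
\]
The drift is again a linear combination of elementary symmetric polynomials:
\begin{itemize}
\item $\sum_i \partial_i e_k = (N-k+1)e_{k-1}$.
\item Write $\partial_i e_k = e_{k-1}^{(i)}$ (the elementary symmetric polynomial in the variables other than $x_i$). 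Symmetrizing over the pair $(i,j)$,
\[
\frac{x_i e_{k-1}^{(i)} - x_j e_{k-1}^{(j)}}{x_i-x_j} = e_{k-1}^{(ij)} ,
\]
because $x_i e_{k-1}^{(i)}$ and $x_j e_{k-1}^{(j)}$ differ only in terms containing exactly one of $x_i,x_j$. Summing over ordered pairs and counting multiplicities gives
\[
\sum_{i}\sum_{j\ne i}\frac{x_i \partial_i e_k}{x_i-x_j} = \tfrac12 (N-k+1)(N-k)\, e_{k-1} .
\]
\end{itemize}
Hence
\[
de_k^{(\beta)} = dK_k^{(\beta)} + (N-k+1)(\alpha + N-k)\, e_{k-1}^{(\beta)}\, dt .
\]
Unlike the Hermite case, the recursion lowers the index by one rather than two.

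\textbf{Step 2: vanishing martingale part.} The martingale part satisfies
\[
\langle K_k^{(\beta)}\rangle_T = \frac{4}{\beta}\int_0^T \sum_i \lambda_i\, |\partial_i e_k|^2\, ds .
\]
To show it vanishes we need a $\beta$-uniform moment bound $\sup_{t\le T}\Ex[\frac1N\sum_i \lambda_i(t)^{2k-1}] \le C$. This is the main obstacle. I would prove it as in \eqref{S2nt}: apply It\^o's formula to the moment processes $S_n(t) = \frac1N \sum_i \lambda_i^n$. By the analogue of \eqref{dmutbeta} the drift is a polynomial in lower moments with coefficients bounded for $\beta\ge1$; since all $\lambda_i\ge0$, the moments are nonnegative. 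After localization by stopping times, take expectations, induct on $n$, and apply Gronwall's inequality (products of moments are handled by H\"older's inequality, or since $N$ is fixed, by $S_jS_l \le N\, S_{j+l}$ for nonnegative variables). With this bound, Doob's inequality gives $K_k^{(\beta)} \PTto 0$, exactly as in the Hermite case.

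\textbf{Step 3: identifying the limit.} Induction on $k$ together with Lemma~\ref{2.2} gives $e_k^{(\beta)} \PTto g_k$, where
\[
g_0 = 1,\qquad g_k(t) = e_k(a) + (N-k+1)(\alpha+N-k)\int_0^t g_{k-1}(s)\, ds .
\]
Each $g_k$ is an explicit polynomial in $t$. Lemma~\ref{lem:fromektox-process}, applied pathwise along subsequences (or by the argument of Lemma~\ref{lem:ektolambdak}), then yields $\lambda_i \PTto x_i(t)$, where the $x_i(t)$ are the ordered roots of
\[
\sum_k (-1)^k g_k(t)\, x^{N-k} .
\]
These roots are real and nonnegative, being limits of the real nonnegative $\lambda_i$, and they depend on $\alpha$ through the coefficients.

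As a consistency check, for zero initial data
\[
g_k(t) = t^k \prod_{j=1}^{k} (N-j+1)(\alpha+N-j) = t^k \binom Nk \frac{\Gamma(\alpha+N)}{\Gamma(\alpha+N-k)},
\]
so the limit is $t$ times the zeros of $L_N^{(\alpha)}$, matching the preceding theorem.
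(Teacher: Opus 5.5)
Your proposal is correct and follows the paper's own sketched proof: It\^o's formula for $e_k^{(\beta)}$ with drift $(N-k+1)(N-k+\alpha)e_{k-1}^{(\beta)}$, vanishing of the martingale parts via $\beta$-uniform moment bounds, induction to obtain $g_k$, and then passage to the roots; you also supply the drift identity and the moment estimates that the paper omits. One small slip is in your consistency check: $t^k\prod_{j=1}^{k}(N-j+1)(\alpha+N-j)$ is missing a factor $1/k!$, but your final expression $t^k\binom{N}{k}\Gamma(\alpha+N)/\Gamma(\alpha+N-k)$ is correct and agrees with Lemma~\ref{lem:Claim1}.
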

\begin{proof}
Let us outline main ideas in the proof. Detailed arguments are omitted because they are similar to those used in the Gaussian case. We begin with investigating the elementary symmetric polynomials of the eigenvalue processes,
\[
	e_k^{(\beta)}(t) = e_k(\lambda_1(t), \dots, \lambda_N(t)), \quad k = 0, \dots, N.
\]
First, by It\^o's formula, we obtain 
\[
d e_k^{(\beta)}(t) 
= \frac2{\sqrt \beta} \sum_{i=1}^N 
\frac{\partial e_k}{\partial \lambda_i}(\lambda_1(t), \dots, \lambda_N(t))
\sqrt{{\lambda_i(t)}}\,
db_i(t)
+ (N-k+1)(N-k+\alpha)\, e_{k-1}^{(\beta)}(t)\, dt.
\]
Next, the martingale parts vanish in the limit when $\beta \to \infty$. 
Then by induction, it follows that for $k = 1, \dots, N,$
\[
	e_k^{(\beta)}(t)  \PTto g_k(t),
\]
where $g_k(t)$ is defined recursively by 
\begin{equation}\label{gk-L}
g_k(t)= e_k(a_1, \dots, a_N) + (N-k+1)(N-k+\alpha) \int_0^t g_{k-1}(u)du, \quad (g_0(t)\equiv 1).
\end{equation}
Finally the limiting processes $x_1(t) \le \dots \le  x_N(t)$ are defined from the relations 
\[
	e_k(x_1(t), \dots, x_N(t)) = g_k(t), \quad k = 0, \dots, N.
\]
This concludes the sketched proof.
\end{proof}

\begin{remark}
The existence and uniqueness of the solution to the system of ODEs 
\[
	\frac{dz_i(t)}{dt} = \frac{\alpha}{z_i(t)} + \sum_{j \neq i} \left( \frac{1}{z_i(t) - z_j(t)} + \frac{1}{z_i(t) + z_j(t)}\right), \quad i = 1, \dots, N,
\]
have been proved in \cite{Voit-Woerner-2022}. Formally, the limiting processes $x_1(t),\dots, x_N(t)$ satisfy 
\[
	\frac{dx_i(t)}{dt} = \alpha + \sum_{j \neq i} \frac{2x_i(t)}{x_i(t) - x_j(t)}, \quad i = 1, \dots, N,
\]
which can be obtained from the ODEs of $z_i(t)$ by the change of variables $x_i(t) = \frac12 z_i(t)^2$.
\end{remark}

Next, we are going to study those limiting processes in more detail. The first property is related to the fact that starting at zero, at any time $t > 0$, the joint distribution of $(\lambda_1(t), \dots, \lambda_N(t))/t$ coincides with the beta Laguerre ensemble~\eqref{bLE}. 
\begin{lemma}\label{lem:Claim1}
Under the zero initial condition, the limiting processes are given by
\[
	(x_1(t), \dots, x_N(t)) = t (z_{1, N}^{(\alpha)}, \dots, z_{N, N}^{(\alpha)}).
\]
Here $0 < z_{1, N}^{(\alpha)} < \dots < z_{N, N}^{(\alpha)}$ are the zeros of the $N$th Laguerre polynomial $L_N^{(\alpha)}$.
\end{lemma}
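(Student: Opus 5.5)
Under the zero initial condition we have $e_k(a_1,\dots,a_N)=0$ for $k\ge1$. The plan is to solve the recursion~\eqref{gk-L} explicitly and then compare with the Laguerre polynomial, in the same way as the zero-initial-condition step of Lemma~\ref{lem:finite-convolution}.

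First, we solve the recursion. With $g_0\equiv 1$ and zero initial data, \eqref{gk-L} gives $g_k(t)=c_k t^k$. The constants are
\[
c_k=\frac{1}{k}(N-k+1)(N-k+\alpha)\,c_{k-1}.
\]
Iterating from $c_0=1$,
\[
c_k=\frac{1}{k!}\prod_{j=1}^{k}(N-j+1)(N-j+\alpha)=\binom{N}{k}\frac{\Gamma(N+\alpha)}{\Gamma(N-k+\alpha)}.
\]
Then $x_1(t)\le\dots\le x_N(t)$ are the zeros of
\[
P_t(x)=\sum_{k=0}^N(-1)^k c_k t^k x^{N-k}=t^N P_1(x/t).
\]
Hence $x_i(t)=t\,x_i(1)$, and it suffices to identify $P_1$ with $L_N^{(\alpha)}$.

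Second, we identify $P_1$. The monic Laguerre polynomial orthogonal with respect to $x^{\alpha-1}e^{-x}$, in the paper's normalization (Example~\ref{ex:Laguerre}), has the explicit form
\[
L_N^{(\alpha)}(x)=\sum_{k=0}^N (-1)^k\binom{N}{k}\frac{\Gamma(N+\alpha)}{\Gamma(N-k+\alpha)}x^{N-k}.
\]
This is the classical formula with parameter $\alpha-1$, since $\Gamma(N+\alpha)/\Gamma(N-k+\alpha)=(N-k+\alpha)\cdots(N-1+\alpha)$. Comparing coefficients shows $P_1=L_N^{(\alpha)}$.

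If the appendix only gives $L_N^{(\alpha)}$ through a three-term recurrence, or as $\det(x-J_N^{(L)})$, we have two options. One is to check that $P_1$ satisfies the Laguerre differential equation $xy''+(\alpha-x)y'+Ny=0$, which is equivalent to the coefficient recursion $k\,c_k=(N-k+1)(N-k+\alpha)c_{k-1}$ found above. The other is to verify the recurrence directly. I expect this matching of normalizations (weight $x^{\alpha-1}e^{-x}$, so that the parameter shift is $\alpha-1$) to be the only delicate point.

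Finally, the zeros of $L_N^{(\alpha)}$ are real, simple and positive for $\alpha>0$. Therefore $x_i(t)=t\,z_{i,N}^{(\alpha)}$ with the ordering $0<z_{1,N}^{(\alpha)}<\dots<z_{N,N}^{(\alpha)}$ preserved for $t>0$. At $t=0$ the statement is trivial, since both sides vanish.
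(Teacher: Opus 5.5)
Your proposal is correct and follows essentially the same route as the paper. The paper also solves the recursion \eqref{gk-L} explicitly to get $g_k(t)=\frac{t^k}{k!}\prod_{j=0}^{k-1}(N-j)(N-j+\alpha-1)$, which is your $c_k t^k$, scales out $t$, and identifies the resulting polynomial with $L_N^{(\alpha)}$ via the closed form \eqref{closedform} in the appendix; your check of the $\alpha-1$ parameter shift against the classical normalization is consistent with that formula.
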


\begin{proof}
Under the zero initial condition, the limiting processes $g_k(t)$ in the proof of Theorem~\ref{thm:LLN-L1} are explicitly calculated as 
\[
	g_k(t) =  \frac{t^k}{k!}\prod_{j=0}^{k-1} (N-j)(N-j +\alpha - 1), \quad k = 1, \dots, N.
\]
Then, we deduce from the definition that $x_i(t)$ have the form 
\[
	x_i(t) = t z_i, \quad i = 1, \dots, N,
\]
where $z_1, \dots, z_N$ are zeros of the following polynomial
\[
	\sum_{k=0}^N \frac{(-1)^k}{k!}\prod_{j=0}^{k-1} (N-j)(N-j +\alpha - 1) x^{N-k},
\]
which is nothing but the $N$th Laguerre polynomial $L_N^{(\alpha)}(x)$ (see formula~\eqref{closedform} in Appendix~\ref{sect:duals}). The proof is complete.
\end{proof}

\begin{lemma}\label{lem:Claim2}
Let $(x_1^{(i)}(t), \dots, x_N^{(i)}(t))$ be the limiting processes with parameter $\alpha_i, (i = 1, 2)$. Then 
\[
	(z_1(t), \dots, z_N(t)) := (x_1^{(1)}(t), \dots, x_N^{(1)}(t)) \boxplus_N (x_1^{(2)}(t), \dots, x_N^{(2)}(t))
\]
are the limiting processes with parameter $\alpha_1 + \alpha_2 + N - 1$.
\end{lemma}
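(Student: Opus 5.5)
We read the statement with the limiting processes of Theorem~\ref{thm:LLN-L1} started from initial data $a=(a_1,\dots,a_N)$ (for $\alpha_1$) and $b=(b_1,\dots,b_N)$ (for $\alpha_2$). The claim is then that $(z_1(t),\dots,z_N(t))$ is the limiting process with parameter $\alpha:=\alpha_1+\alpha_2+N-1$ started from $a\boxplus_N b$. When $a=b=0$ we have $a\boxplus_N b=0$, so this contains the zero-initial-condition case. Combined with Lemma~\ref{lem:Claim1}, it then gives $t z^{(\alpha_1)}\boxplus_N t z^{(\alpha_2)} = t z^{(\alpha_1+\alpha_2+N-1)}$.

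As in the proof of Lemma~\ref{lem:finite-convolution}, we work only at the level of elementary symmetric polynomials. By the proof of Theorem~\ref{thm:LLN-L1}, the limit is characterized by $e_k(x(t))=g_k(t)$, where $g_k$ is the unique solution of the triangular linear system
\[
g_k'=(N-k+1)(N-k+\alpha)\,g_{k-1},\qquad g_0\equiv1,\qquad g_k(0)=e_k(\text{initial data}).
\]
Uniqueness is immediate by induction on $k$, since \eqref{gk-L} determines $g_k$ from $g_{k-1}$. Let $g_i$ and $h_j$ denote these functions for parameter $\alpha_1$ with data $a$ and for parameter $\alpha_2$ with data $b$, respectively. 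By Definition~\ref{defn:FFC},
\[
G_k(t):=e_k(z(t))=\sum_{i+j=k} w^{(k)}_{ij}\, g_i(t)h_j(t),\qquad w^{(k)}_{ij}=\frac{(N-i)!(N-j)!}{N!(N-k)!}.
\]
At $t=0$ this gives $G_k(0)=e_k(a\boxplus_N b)$, and $G_0\equiv1$. It therefore suffices to verify
\[
G_k'=(N-k+1)(N-k+\alpha)\,G_{k-1}.
\]
Lemma~\ref{lem:fromektox} (or simply root identification) then shows that $z(t)$ coincides with the limiting process with parameter $\alpha$ and initial data $a\boxplus_N b$.

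The key step is the coefficient comparison in $G_k'$. By the product rule, $G_k'$ equals
\[
\sum_{i+j=k} w^{(k)}_{ij}\big[(N-i+1)(N-i+\alpha_1)g_{i-1}h_j+(N-j+1)(N-j+\alpha_2)g_ih_{j-1}\big].
\]
Fix $i'+j'=k-1$ and collect the coefficient of $g_{i'}h_{j'}$.
\begin{itemize}
\item The first term contributes via $i=i'+1$, and one checks $w^{(k)}_{i'+1,j'}(N-i')=(N-k+1)\,w^{(k-1)}_{i'j'}$. This gives the contribution $w^{(k-1)}_{i'j'}(N-k+1)(N-i'-1+\alpha_1)$.
\item Symmetrically, the second term contributes $w^{(k-1)}_{i'j'}(N-k+1)(N-j'-1+\alpha_2)$.
\end{itemize}
Adding the two and using $i'+j'=k-1$, the total coefficient is
\[
w^{(k-1)}_{i'j'}(N-k+1)\big(2N-k-1+\alpha_1+\alpha_2\big)=w^{(k-1)}_{i'j'}(N-k+1)(N-k+\alpha).
\]
This is exactly what is required. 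Boundary terms ($i=0$ or $j=0$) vanish automatically because of the factor $g_{-1}=h_{-1}=0$.

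The main obstacle is this bookkeeping of factorial weights; the identity $w^{(k)}_{i'+1,j'}(N-i')=(N-k+1)w^{(k-1)}_{i'j'}$ is the crux. A secondary point is fixing the intended initial data in the statement, which the general version above handles uniformly.
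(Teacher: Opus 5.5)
Your proposal is correct and follows the same route as the paper: differentiate $e_k(z(t))=\sum_{i+j=k}\frac{(N-i)!(N-j)!}{N!(N-k)!}\,c_i d_j$ by the product rule, shift the index in each of the two resulting sums, and combine them to get $(N-k+1)(N-k+\alpha_1+\alpha_2+N-1)\,e_{k-1}(z(t))$. Your write-up is slightly more careful than the paper's. It makes explicit that the relevant initial data is $a\boxplus_N b$ and that the triangular ODE system has a unique solution. It also uses the correct shifted factor $N-i'-1+\alpha_1$, where the paper's reindexed display has the typo $N-\hat i+1+\alpha_1$.
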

\begin{proof}
	For simplicity, let 
\[
	c_k = e_k(x_1^{(1)}(t), \dots, x_N^{(1)}(t)), \quad d_k = e_k(x_1^{(2)}(t), \dots, x_N^{(2)}(t)), \quad f_k = e_k(z_1(t), \dots, z_N(t)). 
\]
We note from the definition of $g_k(t)$ in the proof of Theorem~\ref{thm:LLN-L1} that $c_k$ and $d_k$ are characterized by 
\[
	c_k' = (N-k+1)(N-k+\alpha_1)c_{k-1}, \quad 	d_k' = (N-k+1)(N-k+\alpha_1)d_{k-1}.
\]
Here `$'$' denotes the derivative with respect to $t$.
Then by definition of the finite free convolution,
\[
	f_k = \sum_{i+j=k} \frac{(N-i)!(N-j)!}{N!(N-k)!} c_i d_j.
\]
Take the derivative of both sides, we obtain that 
\[
	f_k' = \sum_{i+j=k} \frac{(N-i)!(N-j)!}{N!(N-k)!} \left\{c_i' d_j + c_i d_j' \right\}.
\]
Let us consider the first part 
\begin{align*}
	&\sum_{i+j=k} \frac{(N-i)!(N-j)!}{N!(N-k)!} c_i' d_j \\
	&=\sum_{i+j=k, i \ge 1} \frac{(N-i)!(N-j)!}{N!(N-k)!} (N-i+1)(N-i+\alpha_1)c_{i-1} d_j \\
	&=\sum_{\hat i + j = k-1} \frac{(N-\hat i)!(N-j)!}{N!(N-(k-1))!}(N-k+1) (N-\hat i+1+\alpha_1)c_{\hat i} d_j \quad (\hat i := i - 1) \\
	&=\sum_{i + j = k-1} \frac{(N- i)!(N-j)!}{N!(N-(k-1))!}(N-k+1) (N- i+1+\alpha_1)c_{ i} d_j \quad (i :=\hat i). 
\end{align*}
Similarly, the second part becomes 
\begin{align*}
	&\sum_{i+j=k} \frac{(N-i)!(N-j)!}{N!(N-k)!} c_i d_j' \\
	&=\sum_{i + j = k-1} \frac{(N- i)!(N-j)!}{N!(N-(k-1))!}(N-k+1) (N- j+1+\alpha_2)c_{ i} d_j. 
\end{align*}
By combining the two parts, we arrive at 
\begin{align*}
	f_k' &= \sum_{i+j=k} \frac{(N-i)!(N-j)!}{N!(N-k)!} \left\{c_i' d_j + c_i d_j' \right\}\\
	&=\sum_{i + j = k-1} \frac{(N- i)!(N-j)!}{N!(N-(k-1))!}(N-k+1) \left\{(N- i+1+\alpha_1)+(N- j+1+\alpha_2) \right\}c_{ i} d_j\\
	&=(N-k+1)(N-k+(\alpha_1+\alpha_2+N-1))\sum_{i + j = k-1} \frac{(N- i)!(N-j)!}{N!(N-(k-1))!}c_{ i} d_j\\
	&=(N-k+1)(N-k+(\alpha_1+\alpha_2+N-1))f_{k-1}.
\end{align*}
This implies that $z_1(t), \dots, z_N(t)$ are the limiting processes with parameter $\alpha_1 + \alpha_2 + N -1$. The proof is complete.
\end{proof}

\begin{lemma}\label{lem:Claim3}
{\rm(i)} Let $(y_1(t), \dots, y_{2N}(t))$ be the limiting processes in the Gaussian case with symmetric initial condition $a_1 \le a_2 \le \cdots \le a_{2N}$,
that is,
\[
	(y_1(t), \dots, y_{2N}(t)) = (a_1, \dots, a_{2N}) \boxplus_{2N} \sqrt{t} (z_{1,2N}^{(H)}, \dots, z_{2N, 2N}^{(H)}).
\]
Here $(a_1 \le a_2 \le \cdots \le a_N)$ are symmetric if 
\[
a_i = - a_{N-i+1}, \quad 1 \le i \le N.
\]
Then $(y_1(t), \dots, y_{2N}(t))$ are also symmetric for $t \ge 0$, and 
\[
	x_i(t) = \frac12 y_{N+i}^2(t), \quad i = 1, \dots, N,
\]
are the limiting processes (in the Laguerre case) with parameter $\alpha = 1/2$.

{\rm(ii)} Let $(y_1(t), \dots, y_{2N+1}(t))$ be the limiting processes in the Gaussian case with symmetric initial condition $a_1 \le a_2 \le \cdots \le a_{2N+1}$. Then $(y_1(t), \dots, y_{2N+1}(t))$ are also symmetric for $t \ge 0$, and 
\[
	x_i(t) = \frac12 y_{N+1+i}^2(t), \quad i = 1, \dots, N,
\]
are the limiting processes with parameter $\alpha = 3/2$. 
\end{lemma}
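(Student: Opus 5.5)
Throughout I work with the characteristic polynomial $p_t(x)=\prod_{i}(x-y_i(t))=\sum_k(-1)^k g_k(t)x^{M-k}$, where $M=2N$ or $2N+1$ and $g_k$ solves \eqref{ODEsgk} with $N$ replaced by $M$.

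\emph{Symmetry.} The initial data are symmetric exactly when $p_0(-x)=(-1)^Mp_0(x)$, that is, when $e_k(a)=0$ for all odd $k$. In the Gaussian ODEs \eqref{ODEsgk}, $g_k'$ involves only $g_{k-2}$, so odd and even indices decouple. Moreover $g_1\equiv e_1(a)=0$, so by induction $g_{k}\equiv 0$ for every odd $k$. Hence $p_t$ stays even or odd, its roots are symmetric about $0$, and $(y_i(t))$ is symmetric for all $t$. An alternative route uses the closed form for $e_k(c(t))$ in the proof of Lemma~\ref{lem:finite-convolution}: each term there involves $e_{k-2m}(a)$, which vanishes when $k$ is odd.

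\emph{Case (i), $M=2N$.} By symmetry the roots of $p_t$ are $\pm y_{N+i}(t)$, so
\[
p_t(x)=\prod_{i=1}^N\bigl(x^2-y_{N+i}^2(t)\bigr)=2^N q_t(x^2/2),\qquad q_t(u):=\prod_{i=1}^N\bigl(u-\tfrac12 y_{N+i}^2(t)\bigr).
\]
Write $h_j(t):=e_j(x_1(t),\dots,x_N(t))$ with $x_i=\tfrac12 y_{N+i}^2$. Comparing coefficients gives
\[
g_{2j}(t)=(-1)^j\cdot(-1)^j\,2^j h_j(t)=2^j h_j(t).
\]
To see the sign, note that $q_t(x^2/2)=\sum_j(-1)^jh_j\,2^{-(N-j)}x^{2N-2j}$, so $p_t=\sum_j(-1)^j2^jh_jx^{2N-2j}$. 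Matching with $\sum_k(-1)^kg_kx^{2N-k}$ at $k=2j$ gives $g_{2j}=2^jh_j$.

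Plugging this into $g_{2j}'=-\tfrac{(2N-2j+1)(2N-2j+2)}{2}g_{2j-2}$ gives
\[
h_j'=-\tfrac{(2N-2j+1)(N-j+1)}{2}h_{j-1}.
\]
The Laguerre recursion \eqref{gk-L}, however, demands a positive coefficient $(N-j+1)(N-j+1/2)$. The difference is the factor $-1$ together with a factor $\tfrac12$ versus $1$. So I must recheck the normalisations carefully.

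\emph{Main obstacle: signs and scaling.} The sign should come out right if the Gaussian recursion is read with $p_t$'s coefficients, since $p_t$ gains lower-order terms of sign pattern like $H_N$. I would verify this directly on $N=1$. Take $p_0=x^2-a^2$; then $p_t=x^2-a^2-t$, so $y_2^2=a^2+t$ and $x_1=\tfrac12(a^2+t)$, giving $x_1'=\tfrac12$. The Laguerre equation $dx/dt=\alpha$ with $\alpha=\tfrac12$ matches. So in the $e_k$ convention $g_2=e_2=-a^2-t$, and $g_2'=-1$ is consistent with the ODE.

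The sign error therefore lies in the matching above. Since $e_2(y)=-y_2^2$, one has $g_{2j}=(-1)^j 2^j h_j$ (the $e$'s of $\pm y$ carry $(-1)^j$). With this correction the equation becomes
\[
h_j'=\tfrac{(2N-2j+1)(2N-2j+2)}{2\cdot 2}h_{j-1}=(N-j+1)(N-j+\tfrac12)h_{j-1},
\]
which is exactly \eqref{gk-L} with $\alpha=\tfrac12$. The initial values also match, since $h_j(0)=e_j(\tfrac12a_{N+i}^2)$. So $(x_i)$ is the limiting Laguerre process started from $(\tfrac12 a_{N+i}^2)$.

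\emph{Case (ii), $M=2N+1$.} Here $p_t(x)=x\prod_{i=1}^N(x^2-y_{N+1+i}^2)$, and again $g_{2j}=(-1)^j2^jh_j$. The recursion then reads
\[
h_j'=\tfrac{(2N-2j+2)(2N-2j+3)}{4}h_{j-1}=(N-j+1)(N-j+\tfrac32)h_{j-1},
\]
which is \eqref{gk-L} with $\alpha=\tfrac32$.

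Finally, by uniqueness of the solution of the linear recursion \eqref{gk-L} and Lemma~\ref{lem:fromektox-process}, the ordered roots are identified in both cases.
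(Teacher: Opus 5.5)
Your proposal is correct and follows the paper's proof. It uses the same factorization $p_t(x)=\prod_{i}\bigl(x^2-y_{N+i}(t)^2\bigr)$ (times $x$ in the odd case), the same identity $g_{2j}=(-2)^j e_j(x_1(t),\dots,x_N(t))$, and the same differentiation through the Gaussian ODE; the only difference is that you get symmetry from the parity decoupling of \eqref{ODEsgk} (with $g_k(0)=e_k(a)=0$ for odd $k$) rather than from the closure of symmetric tuples under $\boxplus_{2N}$. In a clean write-up, drop your first coefficient matching and the detour that follows it: since $(-1)^{2j}=1$, matching gives $g_{2j}=(-1)^j2^jh_j$ directly, not $2^jh_j$.
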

\begin{proof}
We observe that $a_1 \le a_2 \le \cdots \le a_{N}$ are symmetric, if and only if for odd $1\le k \le N$,
\[
	e_k(a_1, a_2, \dots, a_{N}) = 0.
\]
Based on that observation, it is straightforward to see that 
\[
	(c_1, \dots, c_{N}) = (a_1, \dots, a_{N}) \boxplus_{N} (b_1, \dots, b_{N})
\]
are symmetric, provided that both $(a_1, \dots, a_{N})$ and $(b_1, \dots, b_{N})$ are symmetric. 

Let us prove (i). Since 
\[
	(y_1(t), \dots, y_{2N}(t)) = (a_1, \dots, a_{2N}) \boxplus_{2N} \sqrt{t} (z_{1,2N}^{(H)}, \dots, z_{2N, 2N}^{(H)}),
\]
the processes $(y_1(t), \dots, y_{2N}(t))$ are symmetric. Because of the symmetry, it holds that
\[
	\prod_{i=1}^{2N}(x - y_i(t)) = \prod_{i=1}^{N} (x^2 - y_{N+i}(t)^2).
\]
Consequently, 
\[
	e_{2k}(y_1(t), \dots, y_{2N}(t)) = (-1)^k e_k(y_{N+1}(t)^2,\dots, y_{2N}(t)^2).
\]
It then follows from the definition of $x_i(t)$ that for $k = 1, \dots, N$,
\[
	e_k(x_1(t), \dots, x_N(t)) = (-2)^{-k} e_{2k}(y_1(t), \dots, y_{2N}(t)).
\]

Next we calculate the derivative of $e_k(x_1(t), \dots, x_N(t))$,
\begin{align*}
	&\frac{de_k(x_1(t), \dots, x_N(t))}{dt} \\
	&= (-2)^{-k} \frac{de_{2k}(y_1(t), \dots, y_{2N}(t))}{dt}\\
	&= (-2)^{-k} (-1) \frac{(2N-2k+1)(2N-2k+2)}{2}e_{2k-2}(y_1(t), \dots, y_{2N}(t))\\
	&=(N-k+1)(N-k+\frac12) e_{k-1}(x_1(t), \dots, x_N(t)).
\end{align*}
This implies that $(x_1(t), \dots, x_N(t))$ are the limiting processes with parameter $\alpha = \frac12$. The proof of (ii) is exactly the same.
\end{proof}

We arrive at an expression of the limiting processes in terms of the initial data, zeros of Hermite polynomials and zeros of Laguerre polynomials, which is similar to that in the random matrix regime (cf.\ \cite[Theorem 1.3]{Voit-Woerner-2022b}) and that in the high temperature regime (\cite[Remark V.3]{NTT-2025}). 

\begin{theorem}
Let $\alpha > N - \frac12$. Let $0 \le a_1 \le a_2 \le \cdots \le a_N$ be a given initial condition. Let 
\[
	(y_1(t), \dots, y_{2N}(t)) = (-\sqrt{a_N}, -\sqrt{a_{N-1}}, \dots, \sqrt{a_{N-1}}, \sqrt{a_N}) \boxplus_{2N} \sqrt t (z_{1, 2N}^{(H)}, \dots, z_{2N, 2N}^{(H)})
\]
be the limiting processes in the Gaussian case. Then the limiting processes $x_1(t), \dots, x_N(t)$ can be expressed as 
\[
	(x_1(t), \dots, x_N(t)) = \frac12(y_{N+1}(t)^2, \dots, y_{2N}(t)^2) \boxplus_N t (z_{1, N}^{(\alpha - N + \frac12)}, \dots, z_{N, N}^{(\alpha - N + \frac12)}). 
\]
\end{theorem}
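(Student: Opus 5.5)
The plan is to obtain the statement by combining Lemma~\ref{lem:Claim3}(i), Lemma~\ref{lem:Claim2} and Lemma~\ref{lem:Claim1}. All three only involve the limiting processes characterized by the ODE system \eqref{gk-L}. Since $e_k$ of the limiting process is the unique solution of a linear triangular ODE system with given initial values, the limiting process with parameter $\alpha$ is uniquely determined by two things: the initial data, and the derivative identity $g_k' = (N-k+1)(N-k+\alpha)g_{k-1}$ for $k = 1,\dots,N$. Hence it suffices to exhibit a candidate with the right parameter and the right initial value.

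\emph{Step 1.} Apply Lemma~\ref{lem:Claim3}(i) with the symmetric $2N$-tuple $(-\sqrt{a_N},\dots,-\sqrt{a_1},\sqrt{a_1},\dots,\sqrt{a_N})$. This shows that $u_i(t) := \frac12 y_{N+i}(t)^2$, $i=1,\dots,N$, is a limiting Laguerre process with parameter $\alpha_1 = \frac12$, whose initial value is $(\frac12 a_1,\dots,\frac12 a_N)$.

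\emph{Step 2.} Let $v(t) = t(z_{1,N}^{(\alpha_2)},\dots,z_{N,N}^{(\alpha_2)})$ with $\alpha_2 = \alpha - N + \frac12$. The hypothesis $\alpha > N-\frac12$ guarantees $\alpha_2>0$, so Lemma~\ref{lem:Claim1} applies and $v$ is the limiting process with parameter $\alpha_2$ and zero initial condition.

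\emph{Step 3.} Apply Lemma~\ref{lem:Claim2}. The proof there only uses the derivative relations for $c_k,d_k$, with the $\alpha_2$ in $d_k'$ (the displayed $\alpha_1$ is a typo), so it applies verbatim to arbitrary initial data. Therefore $u\boxplus_N v$ satisfies the Laguerre ODE with parameter $\alpha_1+\alpha_2+N-1 = \frac12+\alpha-N+\frac12+N-1=\alpha$. Its initial value is $u(0)\boxplus_N 0 = u(0)$, because $e_j(v(0))=0$ for $j\ge1$ and the $j=0$ coefficient in \eqref{ekcab} equals $1$.

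\emph{Main obstacle: the initial data.} The output of Step 3 starts at $\frac12 a$ rather than $a$, so the factor $\frac12$ must be reconciled. I would first check the normalization in Lemma~\ref{lem:Claim3}. A direct computation from the ODE $\frac{d}{dt}e_{2k}(y) = -\frac{(2N-2k+1)(2N-2k+2)}{2}e_{2k-2}(y)$ shows that $e_k(\frac12 y^2)$ evolves with constant $(N-k+1)(N-k+\frac12)$ only after the time rescaling built into the factor $(-2)^{-k}$. So the correct matching is presumably that $\frac12 y_{N+i}^2$ corresponds to initial data $\frac12 a_i$; equivalently, one should start the Gaussian process from $\pm\sqrt{2a_i}$, or read the convention so that the data match.

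I would resolve this by verifying, at the level of $e_k$, that the initial values agree, and then concluding by uniqueness of the ODE solution together with Lemma~\ref{lem:fromektox}-type root continuity, i.e.\ equality of polynomials. If the constants do not match as literally stated, I would adjust to the consistent scaling ($\pm\sqrt{2a_i}$) and note that the argument is otherwise unchanged. Once the initial values agree, the ordered roots of the two polynomials coincide, which gives the claimed expression.
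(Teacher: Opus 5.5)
You take exactly the paper's route: Lemma~\ref{lem:Claim3}(i) gives parameter $\frac12$, Lemma~\ref{lem:Claim1} gives parameter $\alpha-N+\frac12$ from zero initial data, and Lemma~\ref{lem:Claim2} adds the parameters up to $\alpha$.

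The discrepancy you flag is genuine, and you should state it outright rather than hedge. At $t=0$ the right-hand side equals $\frac12(a_1,\dots,a_N)\boxplus_N(0,\dots,0)=\frac12(a_1,\dots,a_N)$, while $x_i(0)=a_i$. So the identity as printed holds only for zero initial data.

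The paper's proof slips at the same step. It asserts that Lemma~\ref{lem:Claim3}(i) yields the Laguerre process started at $a_1,\dots,a_N$, but that process starts at $\frac12 a_1,\dots,\frac12 a_N$.

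The correct statement starts the Gaussian process at $(-\sqrt{2a_N},\dots,-\sqrt{2a_1},\sqrt{2a_1},\dots,\sqrt{2a_N})$. Equivalently, by the scaling invariance of the Gaussian ODE, replace $\frac12 y_{N+i}(t)^2$ by $y_{N+i}(t/2)^2$. With that correction your argument goes through verbatim.

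One correction to your reasoning: no time change is hidden in Lemma~\ref{lem:Claim3}. For $x=\frac12 y^2$ the constant $(N-k+1)(N-k+\frac12)$ comes out exactly, and the mismatch lies entirely in the initial value.
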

\begin{proof}
	First, by Lemma~\ref{lem:Claim3}(i), the processes $\frac12(y_{N+1}(t)^2, \dots, y_{2N}(t)^2)$ are the limiting processes in the Laguerre case with parameter $1/2$ under the initial condition $a_1, \dots, a_N$. Second, Lemma~\ref{lem:Claim1} states that $t (z_{1, N}^{(\alpha - N + \frac12)}, \dots, z_{N, N}^{(\alpha - N + \frac12)})$ are the limiting processes with parameter $(\alpha - N + \frac12)$ under the zero initial condition. Finally, the conclusion follows immediately from Lemma~\ref{lem:Claim2}. The proof is complete.
\end{proof}

\subsection{Central limit theorems for the empirical distributions}
By a moment method, we can obtain the LLN and CLT for the empirical measure processes in similar forms with the high temperature regime. Let us introduce a CLT for the empirical distributions of beta Laguerre ensembles by using orthogonal polynomials. Let 
\[
	L_N = \frac1N \sum_{i=1}^N \delta_{\lambda_i}
\]
be the empirical distribution of the eigenvalues of beta Laguerre ensembles~\eqref{bLE}. Denote by 
\[
	\mu = \frac1N \sum_{i=1}^N \delta_{z_{i, N}^{(\alpha)}}
\]
the limiting measure in the freezing regime. We consider duals of Laguerre polynomials, denoted by $\{q_n\}_{n=0}^{N-1}$, which are orthogonal with respect to the probability measure 
\[
	\mu^* = \frac{1}{N(\alpha + N - 1)}\sum_{i=1}^N z_{i, N}^{(\alpha)}\delta_{z_{i, N}^{(\alpha)}}.
\]
(See Example~{\rm\ref{ex:Laguerre}}.) Let $Q_n$ be a primitive of $q_n$. Then similar to the high temperature regime \cite{NTT-2023}, we can establish the following.
\begin{theorem}\label{thm:bLE-CLT}
As $\beta \to \infty$, 
\[
	\sqrt {\beta N/2} \left( \bra{L_N, Q_n} - \bra{\mu, Q_n}\right)_{n=0}^{N-1} \dto \Normal_N\left(0, \diag\left(\frac{(\alpha + N - 1)}{n+1} \bra{q_n, q_n}_{\mu^*}\right)\right).
\]
\end{theorem}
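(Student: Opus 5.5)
Following the Gaussian case (Theorem~\ref{theorem2.12}), I will argue at the process level with the beta Laguerre processes~\eqref{SDEs-L} started at zero, and then set $t=1$. The reason is that under the zero initial condition the law of $(\lambda_i(1))_{i=1}^N$ is the beta Laguerre ensemble~\eqref{bLE}. The starting point is It\^o's formula for $\mu_t^{(\beta)}=\frac1N\sum_i\delta_{\lambda_i(t)}$. For a polynomial $f(t,x)$ it gives
\[
d\langle\mu_t^{(\beta)},f\rangle=\frac{2}{N\sqrt\beta}\sum_i f'(t,\lambda_i)\sqrt{\lambda_i}\,db_i+\Big\langle\mu_t^{(\beta)},\partial_tf+\alpha f'+\big(\tfrac2\beta-1\big)xf''\Big\rangle dt+N\iint\frac{xf'(x)-yf'(y)}{x-y}\,d\mu_t^{(\beta)}d\mu_t^{(\beta)}\,dt .
\]
The drift has the same structure as in the Gaussian case, and so does the moment LLN. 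The martingale has quadratic variation $\frac{4}{\beta N^2}\int_0^t\sum_i\lambda_i f'(\lambda_i)^2\,du$. I will take the moment bounds $\Ex[S_n^{(\beta)}(t)]\le C_{n,T}$ from the argument used for~\eqref{S2nt}. With these, $\sqrt{\beta N/2}$ times the martingale converges to a Gaussian martingale with covariance $\frac{2}{N}\int_0^{s\wedge t}\langle\mu_u,xf'g'\rangle\,du$. Here $\mu_u$ is the deterministic limit $\frac1N\sum_i\delta_{uz_{i,N}^{(\alpha)}}$ given by Lemma~\ref{lem:Claim1}.

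Next I linearize. Write $Y^{(\beta)}_t(f)=\sqrt{\beta N/2}\,(\langle\mu_t^{(\beta)},f\rangle-\langle\mu_t,f\rangle)$. The drift is quadratic in $\mu$, so subtracting the limiting equation gives
\[
dY_t(f)=dM_t(f)+Y_t\big(\partial_tf+\alpha f'-xf''+2N\,\mathcal K_t f\big)dt+o(1),
\qquad \mathcal K_tf(x)=\int\frac{xf'(x)-yf'(y)}{x-y}\,d\mu_t(y).
\]
The $o(1)$ collects the term $\frac2\beta xf''$, which is $O(\beta^{-1/2})$ after scaling, and the quadratic error $\sqrt\beta\,(\mu^{(\beta)}-\mu)^{\otimes2}$, which vanishes in probability by the LLN. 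This is the same mechanism as in \cite{NTT-2023}. The key step is to pick test functions that make the drift operator vanish. I define $\widetilde Q_n(t,x)=t^{n+1}Q_n(x/t)$, adjusting the exponent to the Laguerre scaling $x\sim t$, and aim to show
\[
\partial_t\widetilde Q_n+\alpha\widetilde Q_n'-x\widetilde Q_n''+2N\,\mathcal K_t\widetilde Q_n\equiv0 \quad\text{on }\operatorname{supp}\mu_t .
\]
Once this holds, $Y_t(\widetilde Q_n)$ equals its martingale part up to $o(1)$. Its limiting covariance is then $\frac2N\int_0^{s\wedge t}\langle\mu_u,x\,\widetilde q_m\widetilde q_n\rangle\,du$. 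Using $x\,d\mu=(\alpha+N-1)\,d\mu^*$ together with orthogonality, this becomes $\delta_{mn}(\alpha+N-1)\frac{\langle q_n,q_n\rangle_{\mu^*}}{n+1}(s\wedge t)^{n+1}$, up to the normalization constant coming from the factor $2/N$ against $\beta N/2$, which I will track carefully. Setting $s=t=1$ gives the theorem.

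I expect the main obstacle to be verifying that the dual Laguerre polynomials annihilate the linearized operator. I would reduce it to $t=1$ by scaling. It then becomes an identity in $x$ restricted to the $N$ points $z_{i,N}^{(\alpha)}$, which involves $\frac1N\sum_j\frac{x q_n(x)-z_jq_n(z_j)}{x-z_j}$. I would first try to prove it through the three-term recurrence of $\{q_n\}$ from Example~\ref{ex:Laguerre}, by induction on $n$. If that fails, I would fall back on the equivalent characterization that the drift is triangular in the basis $\{Q_n\}$, and obtain the diagonal covariance through a Gram--Schmidt argument, as in the high temperature case.
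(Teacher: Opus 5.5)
Your route is the one the paper points to: It\^o's formula for the beta Laguerre processes started at zero, linearization around $\mu_t=\frac1N\sum_i\delta_{t z_{i,N}^{(\alpha)}}$, the test functions $\widetilde Q_n(t,x)=t^{n+1}Q_n(x/t)$, and then $t=1$. The paper itself gives no proof beyond a reference to the moment method of \cite{NTT-2023}. As written, however, your central step fails, for two concrete reasons.

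\textbf{First, your It\^o drift is wrong.} Symmetrizing $\frac1N\sum_{i\ne j}\frac{2\lambda_i f'(\lambda_i)}{\lambda_i-\lambda_j}$ gives $N\iint\frac{xf'(x)-yf'(y)}{x-y}\,d\mu_t^{(\beta)}d\mu_t^{(\beta)}$ minus its diagonal $\langle\mu_t^{(\beta)},(xf')'\rangle=\langle\mu_t^{(\beta)},f'+xf''\rangle$. You subtracted $xf''$ but not $f'$. The single-integral term should therefore be $\langle\mu_t^{(\beta)},\partial_tf+(\alpha-1)f'+(\frac2\beta-1)xf''\rangle$. A check with $f=x$: the drift of $\frac1N\sum_i\lambda_i$ is $\alpha+N-1$, while your formula gives $\alpha+N$.

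This is not cosmetic. Your linearized operator is the correct one plus $\partial_x$. For $n=1$, with $q_1(x)=x-(\alpha+2N-2)$, your operator applied to $\widetilde Q_1$ at $t=1$ gives $x+\mathrm{const}$, whose fluctuation has a nondegenerate limit. So the dual Laguerre polynomials of the theorem do not annihilate your drift, and no induction on their recurrence can show that they do.

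\textbf{Second, the cancellation criterion is wrong.} Write $z_i=z_{i,N}^{(\alpha)}$. To first order, $Y_t(h)\approx N^{-1/2}\sum_i h'(tz_i)\sqrt{\beta/2}\,(\lambda_i(t)-tz_i)$. The fluctuation is governed by $h'$ on the support, so ``$h\equiv 0$ on $\mathrm{supp}\,\mu_t$'' is neither sufficient nor necessary. What you need is that the linearized drift of $\widetilde Q_n$ is constant in $x$, since constants have zero fluctuation. That constant is genuinely nonzero: for $n=0$ we have $\widetilde Q_0=x$, and the correct operator gives $\alpha+2N-1$. With $u=x/t$, the correct operator applied to $\widetilde Q_n$ equals $t^nF_n(u)$, where
\[
F_n(u)=(n+1)Q_n(u)-uq_n(u)+(\alpha-1)q_n(u)-uq_n'(u)+2\sum_{j=1}^N\frac{uq_n(u)-z_jq_n(z_j)}{u-z_j}.
\]
Since $\deg F_n\le n\le N-1$, it suffices to prove $F_n'(z_i)=0$ for all $i$. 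One checks directly that this holds for $n=0,1,2$. Equivalently, $t^n\sum_i q_n(z_i)\eta_i(t)$ has zero drift in the linearized dynamics of $\eta_i=\lim\sqrt{\beta/2}(\lambda_i-tz_i)$. This identity is the real content of the theorem, and your proposal leaves it open.

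\textbf{Two smaller corrections.}
\begin{enumerate}
\item The scaled covariation is $2\int_0^t\langle\mu_r^{(\beta)},xf'g'\rangle\,dr$, not $\frac2N\int\cdots$. With $\widetilde Q_n'(r,x)=r^nq_n(x/r)$ and $x\,d\mu=(\alpha+N-1)\,d\mu^*$, this gives $\delta_{mn}\frac{(\alpha+N-1)\langle q_n,q_n\rangle_{\mu^*}}{n+1}(s\wedge t)^{2n+2}$. The exponent is $2n+2$, not $n+1$, and at $s=t=1$ this is exactly the stated variance.
\item The quadratic remainder $\sqrt\beta\,(\mu_t^{(\beta)}-\mu_t)^{\otimes2}$ does not vanish by the LLN alone. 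It vanishes because the lower moments are $\sqrt\beta$-tight, which follows by induction on the degree: the bilinear term for $x^k$ involves only moments of degree at most $k-1$.
\end{enumerate}
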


Let $\hat q_n = q_n / \sqrt{\bra{q_n, q_n}_{\mu^*}}, n = 0, \dots, N-1$ be orthogonal polynomials with respect to $\mu^*$. Define the orthogonal matrix $\Q$ by 
\[
	\Q = \frac{1}{\sqrt{N(N+\alpha-1)}}\left(\sqrt{z_{i, N}^{(\alpha)}} \hat q_n(z_{i, N}^{(\alpha)}) \right)_{n = 0, \dots, N-1; i = 1, \dots, N}.
\]
\begin{corollary}
	As $\beta \to \infty$, 
\[
	\left\{\sqrt {2\beta}\left(\sqrt {\lambda_i} - \sqrt{z_{i, N}^{(\alpha)}}\right) \right\}_{n=0}^{N-1} \dto \Normal_N(0, \Sigma),
\]
where the limiting covariance matrix $\Sigma$ satisfies $\Q\Sigma \Q^\tran = \diag(\frac1{n+1})_{n=0}^{N-1}.$

\end{corollary}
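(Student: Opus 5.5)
The plan is to deduce the corollary from Theorem~\ref{thm:bLE-CLT}, following the proof of Corollary~\ref{coro:G}: linearize the statistics $\bra{L_N, \hat Q_n}$ around the limiting zeros. Since this linearization will be done in the variable $\sqrt{\lambda_i}$, the vectors $\sqrt{z_{i,N}^{(\alpha)}}\,\hat q_n(z_{i,N}^{(\alpha)})$ appear naturally.

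\textbf{Step 1: renormalize the CLT.} Write $z_i = z_{i,N}^{(\alpha)}$, and take $\hat Q_n$ to be a primitive of $\hat q_n$. Dividing Theorem~\ref{thm:bLE-CLT} by $\sqrt{\bra{q_n,q_n}_{\mu^*}}$ gives
\[
	\sqrt{\beta N/2}\left(\bra{L_N,\hat Q_n}-\bra{\mu,\hat Q_n}\right)_{n=0}^{N-1} \dto \Normal_N\left(0,\diag\left(\tfrac{\alpha+N-1}{n+1}\right)\right).
\]

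\textbf{Step 2: linearize in $\sqrt{\lambda_i}$.} Apply the mean value theorem to $u\mapsto \hat Q_n(u^2)$, whose derivative is $2u\,\hat q_n(u^2)$. For some $\gamma_i^{(\beta)}$ between $\sqrt{\lambda_i}$ and $\sqrt{z_i}$,
\[
	\sqrt{\beta N/2}\left(\bra{L_N,\hat Q_n}-\bra{\mu,\hat Q_n}\right) = \frac{1}{\sqrt N}\sum_{i=1}^N \gamma_i^{(\beta)}\hat q_n\bigl((\gamma_i^{(\beta)})^2\bigr)\,\sqrt{2\beta}\left(\sqrt{\lambda_i}-\sqrt{z_i}\right).
\]
The constants check out: $\sqrt{\beta N/2}\cdot\frac1N\cdot 2 = \frac{1}{\sqrt N}\sqrt{2\beta}$.

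\textbf{Step 3: pass to the limit.} The freezing-regime LLN (the theorem that $\lambda_i$ converges in probability to $z_i$) gives $\gamma_i^{(\beta)}\Pto\sqrt{z_i}$. Assume the vector $\xi^{(\beta)}=(\sqrt{2\beta}(\sqrt{\lambda_i}-\sqrt{z_i}))_i$ converges in distribution to a centered Gaussian $\eta$ with covariance $\Sigma$. Slutsky's lemma then shows the left side converges to $\sqrt{N(N+\alpha-1)}\,(\Q\eta)_n/\sqrt N = \sqrt{\alpha+N-1}\,(\Q\eta)_n$, jointly in $n$. Comparing covariances with Step~1 gives $(\alpha+N-1)\Q\Sigma\Q^\tran = (\alpha+N-1)\diag(\frac1{n+1})$. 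This yields the claim, provided $\Q$ is orthogonal.

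Orthogonality of $\Q$ follows from the definition of $\mu^*$:
\[
	(\Q\Q^\tran)_{mn}=\frac{1}{N(N+\alpha-1)}\sum_i z_i\hat q_m(z_i)\hat q_n(z_i)=\bra{\hat q_m,\hat q_n}_{\mu^*}=\delta_{mn}.
\]
Since $\Q$ is a square invertible matrix, the relation $\Sigma=\Q^\tran\diag(\frac1{n+1})\Q$ then determines $\Sigma$.

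\textbf{Main obstacle.} The hard step is the a priori convergence of $\xi^{(\beta)}$ to some Gaussian, which Corollary~\ref{coro:G} simply took as known. I would first try to cite it from the known freezing-regime CLTs \cite{Andraus-HV-2021, Voit-2019}. If those are unavailable, I would argue directly. The map $F:x\mapsto(\bra{\frac1N\sum\delta_{x_i},\hat Q_n})_n$ has Jacobian at $(z_i)$ equal, up to nonzero factors, to $\Q\,\diag(\sqrt{z_i})^{-1}$, because the $z_i$ are distinct and positive. Hence $F$ is a local diffeomorphism near $(z_i)$, and the delta method applied to $F^{-1}$ transfers the Gaussian limit of Theorem~\ref{thm:bLE-CLT} back to $\xi^{(\beta)}$. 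This gives both the existence of the limit and the stated covariance.
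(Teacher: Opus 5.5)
Your proposal is correct and follows the paper's proof step for step. You renormalize Theorem~\ref{thm:bLE-CLT} to the $\hat Q_n$, apply the mean value theorem to $u\mapsto \hat Q_n(u^2)$, pass to the limit with Slutsky to obtain $\sqrt{\alpha+N-1}\,\Q\eta$, and compare covariances. The one difference is an addition: the paper simply assumes a Gaussian limit for $\sqrt{2\beta}\bigl(\sqrt{\lambda_i}-\sqrt{z_{i,N}^{(\alpha)}}\bigr)$, while your inverse-function/delta-method argument derives it from Theorem~\ref{thm:bLE-CLT} and the LLN. That argument works because the Jacobian of $F$ at the zeros is a nonzero multiple of $\Q\,\diag(z_{i,N}^{(\alpha)})^{-1/2}$, hence invertible, so it fills a step the paper leaves implicit.
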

\begin{proof}
We use the same idea as in the proof of Corollary~\ref{coro:G}.  Let $\xi_i = \sqrt{\lambda_i}$ and $u_i = \sqrt{z_{i, N}^{(\alpha)}}$. Then the following LLN and CLT hold
\[
	(\xi_i)_{i=1}^N \Pto \left(u_i \right)_{i=1}^N,
\]
\[
	\left\{\sqrt{2\beta} \left(\xi_i - u_i \right) \right\}_{i=1}^N \dto \{\eta_i\}_{i=1}^N,
\]
where $\{\eta_i\}_{i=1}^N$ are jointly Gaussian with mean zero and covariance matrix $\Sigma$. 

Let $\hat Q_n$ be a primitive of $\hat q_n$. We begin with the following expression 
\begin{align*}
\sqrt{\beta N/2}\left(\langle L_N, \hat{Q}_n \rangle - \langle \mu^*, \hat{Q}_n \rangle\right) &= \frac{\sqrt{\beta / 2}}{\sqrt N} \sum_{i=1}^N  \left(\hat Q_n(\xi_i^2) - \hat Q_n(u_i^2)\right) \\
&=\frac{\sqrt{\beta / 2}}{\sqrt N} \sum_{i=1}^N  2\gamma_{i} \hat q_n(\gamma_{i}^2) \left(\xi_i - u_i\right)\\
&=\frac{1}{\sqrt N} \sum_{i=1}^N  \gamma_{i} \hat q_n(\gamma_{i}^2) \sqrt{2\beta}\left(\xi_i - u_i\right),
\end{align*}
for some $\gamma_i$ between $\xi_i$ and $u_i$, by applying the mean value theorem. Then the above LLN and CLT imply that 
\[
	\sqrt{\beta N/2}\left(\langle L_N, \hat{Q}_n \rangle - \langle \mu^*, \hat{Q}_n \rangle\right)  \dto \frac{1}{\sqrt N} \sum_{i=1}^N   u_i \hat q_n(u_i^2)\eta_i.
\]
The joint convergence also holds. Consequently, as column vectors, 
\[
	\left\{\sqrt{\beta N/2}\left(\langle L_N, \hat{Q}_n \rangle - \langle \mu^*, \hat{Q}_n \rangle\right)\right\}_{n=0}^{N-1} \dto \sqrt{N+\alpha-1} \Q \eta, \quad \eta = (\eta_1, \dots, \eta_N)^\tran.
\]
In addition, we rewrite the CLT in Theorem~\ref{thm:bLE-CLT} by using primitives of orthonormal polynomials $\hat q_n$ as
\[
	\sqrt {\beta N/2} \left( \bra{L_N, \hat Q_n} - \bra{\mu, \hat Q_n}\right)_{n=0}^{N-1} \dto \Normal_N\left(0, \diag\left(\frac{(\alpha + N - 1)}{n+1} \right)\right).
\]
Therefore, the covariance matrix $\Q \Sigma \Q^\tran$ of $\Q \eta$ coincides with $\diag(\frac1{n+1})_{n=0}^{N-1}$, completing the proof.
\end{proof}

\bigskip
\noindent\textbf{Acknowledgement.} The authors would like to thank Professor Peter J. Forrester for suggesting us using a stochastic approach to investigate the freezing regime.

\appendix
\section{Orthogonal polynomials and their duals}
\label{sect:duals}
This section deals with orthogonal polynomials, Jacobi matrices and their spectral measures. We aim to introduce orthogonal polynomials with respect to discrete probability measures of the form  
\[
	 \sum_{i=1}^N \frac{1}{N} \delta_{z_{i, N}^{(H)}}, \quad	 \frac{1}{const}\sum_{i=1}^N z_{i, N}^{(\alpha)} \delta_{z_{i, N}^{(\alpha)}},
\] 
in terms of dual polynomials,
where $z_{i, N}^{(H)}$ and $z_{i, N}^{(\alpha)}$ are zeros of Hermite polynomials, and of Laguerre polynomials, respectively.

Let $J$ be a finite Jacobi matrix, a symmetric tridiagonal  matrix of the form
\begin{equation}
	J = \begin{pmatrix}
		a_1		&b_1\\
		b_1		&a_2		&b_2\\
		&\ddots	&\ddots	&\ddots\\
		&&b_{N-1}	&a_N
	\end{pmatrix},
\end{equation}
where $a_1, \dots, a_N \in \R; b_1, \dots, b_{N-1}>0$. Then $J$ has $N$ distinct eigenvalues $\lambda_1, \dots, \lambda_N$ with corresponding normalized eigenvectors $v_1, \dots, v_N$.
The spectral measure of $J$ is the probability measure $\mu$ on $\R$ satisfying 
\begin{equation}\label{sp-moment}
	\bra{\mu, x^n} = \int_{\R} x^n d\mu(x) = J^n(1,1), \quad n = 0,1,\dots. 
\end{equation}
It follows from the spectral decomposition of $J$ that the spectral measure $\mu$ has the form
\[
	\mu = \sum_{i=1}^N w_i \delta_{\lambda_i}, \quad w_i = |v_i(1)|^2,
\]
which is a discrete probability measure supported on the eigenvalues of $J$.

Define a sequence of monic polynomials $p_0, \dots, p_N$ by the three-term recurrence relation
\begin{align*}
	&p_0(x) = 1, \quad p_1(x) = x - a_1,\\
	&p_{n+1}(x) = xp_n(x) - a_{n+1} p_n(x) - b_n^2 p_{n-1}(x), \quad n = 1, \dots, N-1.
\end{align*}
Then $\{p_i\}_{i=0}^{N-1}$ are orthogonal in $L^2(\mu)$ with orthogonal relations
\[
	\int_\R p_n(x) p_m(x) d\mu(x)  = \delta_{mn} \prod_{i=1}^n b_i^2,\quad 0 \le m, n \le N - 1,
\]
in other words,
\[
	\sum_{i=1}^N w_i p_n(\lambda_i) p_m(\lambda_i) = \delta_{mn} \prod_{i=1}^{n} b_i^2.
\]
Here $\delta_{mn} = 1$, if $m = n$ and $\delta_{mn}=0$, otherwise.
Note that $p_N(x) = \det(x - J)= \prod_{i=1}^N(x - \lambda_i)$ is a zero function in $L^2(\mu)$. For $n =0, 1,\dots, N-1$, let
\[
	\tilde p_n  = p_n / \sqrt{h_n}, \quad (h_n := b_1^2 \cdots b_n^2).
\]
Then $\{\tilde p_n\}_{n=0}^{N-1}$ are orthonormal polynomials. Those polynomials satisfy the following three-term recurrence relation
\[
	b_{n+1}\tilde p_{n+1} = x\tilde p_n - a_{n+1}\tilde p_n - b_n\tilde p_{n-1}, \quad n = 0, \dots, N-1,
\]
$(b_0 := 0, b_N := 1)$. We rewrite it in a matrix form as
\begin{equation}
	\begin{pmatrix}
		a_1		&b_1\\
		b_1		&a_2		&b_2\\
		&\ddots	&\ddots	&\ddots\\
		&&b_{N-1}	&a_N
	\end{pmatrix}
	\begin{pmatrix}
		\tilde p_0 \\
		\tilde p_1 \\
		\vdots\\
		\tilde p_{N-1}
	\end{pmatrix}
	=
	\begin{pmatrix}
		x\tilde  p_0 \\
		x\tilde  p_1 \\
		\vdots\\
		x\tilde  p_{N-1} - \tilde p_N
	\end{pmatrix}.
\end{equation}
We deduce that  
\[
	(\tilde p_0(\lambda_i), \dots, \tilde p_{N-1}(\lambda_i))^\top,
\]
is an eigenvector of $J$ with respect to $\lambda_i$,
and thus, the normalized eigenvector with respect to the eigenvalue $\lambda_i$ is given by
\begin{equation}\label{normalized-ev}
	v_i = (\tilde p_0(\lambda_i), \dots, \tilde p_{N-1}(\lambda_i))^\top/\sqrt{\sum_{j=0}^{N-1} \tilde p_j(\lambda_i)^2}.
\end{equation}
In particular, the weights $w_i$ in the expression of the spectral measure can be expressed as
\begin{equation}
	w_i = \frac{1}{\sum_{j=0}^{N-1} \tilde p_j(\lambda_i)^2},
\end{equation}
which can be further simplified to be 
\begin{equation}
	w_i = \frac{h_{N-1}}{p_{N-1}(\lambda_i) p_N'(\lambda_i)}
\end{equation}
by using the Christoffel--Darboux formula.

The dual of $J$ is defined to be the Jacobi matrix $J^*$ by reversing the sequences $\{a_n\}_{n=1}^N$ and $\{b_n\}_{n=1}^{N-1}$,
\[
	J^* = \begin{pmatrix}
		a_N		&b_{N-1}\\
		b_{N-1}		&a_{N-1}		&b_{N-2}\\
		&\ddots	&\ddots	&\ddots\\
		&&b_{1}	&a_1
	\end{pmatrix}.
\] 
The polynomials $\{q_n\}_{n=0}^{N-1}$ defined by
\begin{align*}
	&q_0 = 1, q_1 = x - a_N,\\
	&q_{n+1} = x q_n - a_{N-n} q_n - b_{N-n}^2 q_{n-1}, \quad n =1, \dots, N - 2,
\end{align*}
are called dual polynomials of $\{p_n\}_{n=0}^{N-1}$. Note that $\{q_n\}_{n=0}^{N-1}$ are orthogonal with respect to the spectral measure $\mu^*$ of $J^*$ which can be expressed as
\[
	\mu^* = \sum_{i=1}^N w_i^* \delta_{\lambda_i}, \quad w_i^* = |v_i(N)|^2,
\]
because $(J^*)^n(1, 1) = J^n(N, N), n = 0,1,\dots.$
Formula~\eqref{normalized-ev} leads to
\begin{equation}\label{weights*}
	w_i^*  = \frac{\tilde p_{N-1}^2(\lambda_i)}{\sum_{j=0}^{N-1} \tilde p_j^2(\lambda_i)} = \tilde p_{N-1}^2(\lambda_i) w_i = \frac{h_{N-1}\tilde p_{N-1}^2(\lambda_i)}{p_{N-1}(\lambda_i) p_N'(\lambda_i)} = \frac{p_{N-1}(\lambda_i)}{p_N'(\lambda_i)}.
\end{equation}

In the infinite case, 
\[
	J = \begin{pmatrix}
		a_1		&b_1\\
		b_1		&a_2		&b_2\\
		&\ddots	&\ddots	&\ddots\\
	\end{pmatrix},
	\quad a_i \in \R; b_j > 0,
\]
monic polynomials $\{p_n\}_{n=0}^\infty$ are defined by the three-term recurrence relation up to infinity. They are orthogonal with respect to any probability measure $\mu$ satisfying the moment relation~\eqref{sp-moment}. In case the probability measure $\mu$ is unique, it is called the spectral measure of $J$. A sufficient condition for the unicity is given by 
\[
	\sum_{n=1}^\infty \frac{1}{b_n} = \infty,
\]
(see \cite[Corollary
3.8.9]{Simon-book-2011})
We also call $J$ the Jacobi matrix of $\mu$. In this infinite case, we consider the dual polynomials of the first $N$ polynomials $\{p_i\}_{i=0}^{N-1}$ which are denoted by $\{q_i^{(N)}\}_{i=0}^{N-1}$ or simply as $\{q_i\}_{i=0}^N$ when it is clear from the context.

\begin{example}[
Hermite polynomials]\label{ex:Hermite} (Probabilist's) Hermite polynomials are monic polynomials orthogonal with respect to the standard Gaussian measure $ \frac{1}{\sqrt{2\pi}} e^{-\frac{x^2}2} dx$. They can be defined recursively by
\begin{align*}
	&H_0(x) = 1, \quad H_1(x) = x,\\
	&H_{n+1}(x) = xH_n(x)  - n H_{n-1}(x), \quad n \ge 1.
\end{align*}
Note that $H_n(x)$ has an explicit expression as follows
\begin{equation}\label{HeN}
{H}_{n}(x) =\sum_{m \le n/2}  \frac{(-1)^m}{2^m} \frac{n!}{m!(n-2m)!} x^{n-2m}.
\end{equation}
In terms of Jacobi matrices, the standard Gaussian measure $ \frac{1}{\sqrt{2\pi}} e^{-\frac{x^2}2} dx$ is the spectral measure of the following infinite Jacobi matrix 
\[
	J^{(H)} = \begin{pmatrix}
		0		&1\\
		1		&0		&\sqrt 2\\
		&\sqrt 2		&0		&\sqrt 3\\
		&&\ddots	&\ddots	&\ddots\\
	\end{pmatrix}.
\]

Define the finite Jacobi matrix $J_N^{(H)}$ by
\begin{equation}\label{JNH}
	J_N^{(H)} = \begin{pmatrix}
		0		&1\\
		1		&0		&\sqrt 2\\
		&\ddots	&\ddots	&\ddots\\
		&&\sqrt{N-1}	&0
	\end{pmatrix}.
\end{equation}
Then $H_N(x) = \det(x - J_N^{(H)})$. Consequently, the eigenvalues $\{z_{i,N}^{(H)}\}_{i=1}^N$ of $J_H^{(N)}$ are zeros of the $N$th Hermite polynomial $H_N(x)$. For this finite matrix, dual polynomials $\{q_i\}_{i=0}^{N-1}$ are orthogonal w.r.t. (see \cite{Vinet-Zhedanov-2004})
\[
	\mu^* = \sum_{i=1}^N \frac1N \delta_{z^{(H)}_{i,N}}.
\]
This has been derived by using equation~\eqref{weights*} with the help of the relation $H'_N(x) = N H_{N-1}(x)$, 
\[
	w_i^* = \frac{H_{N-1}(z^{(H)}_{i,N})}{H_N'(z^{(H)}_{i,N})} = \frac 1N.
\]
\end{example}

\begin{example}[(Generalized) Laguerre polynomials]
\label{ex:Laguerre} We consider monic polynomials $L_n^{(\alpha)}(x)$ which are orthogonal with respect to the weight 
\[
	\frac{1}{\Gamma(\alpha)} x^{\alpha - 1} e^{-x}, \quad x > 0,
\]
which is the density of the gamma distribution with parameters $(\alpha, 1)$, 
where $\alpha > 0$ is a parameter. (Note that usual Laguerre polynomials have leading coefficient $(-1)^n/n!$.) They satisfy the three-term recurrent relation 
\[
	L_{n+1}^{(\alpha)}(x) = (x - (\alpha + 2n)) L_n^{(\alpha)}(x) - n(\alpha + n-1) L_{n-1}^{(\alpha)}(x), \quad n \ge 1,
\]
with $L_0^{(\alpha)}(x) = 1, L_1^{(\alpha)}(x) = x - \alpha$. In terms of Jacobi matrix, it means that the above gamma density is the spectral measure of 
\begin{align*}
	J^{(\alpha)} &= \begin{pmatrix}
		\alpha	& \sqrt{\alpha} \\
		\sqrt{\alpha} 	&\alpha + 2	& \sqrt{2}\sqrt{ \alpha + 1} 	\\
		&&\ddots	&\ddots	&\ddots 
	\end{pmatrix} \\
	&=	\begin{pmatrix}
		\sqrt{\alpha}	\\
		\sqrt 1	&\sqrt{\alpha + 1}	\\
		&\ddots	&\ddots	
	\end{pmatrix}
	\begin{pmatrix}
		\sqrt{\alpha}	&\sqrt1	\\
			&\sqrt{\alpha + 1}	&\sqrt{2}	\\
			&&\ddots	&\ddots	
	\end{pmatrix}.
\end{align*}
Note that Laguerre polynomials have a closed form as 
\begin{equation}
\label{closedform}
	L_{n}^{(\alpha)}(x) = \sum_{k=0}^n \frac{(-1)^k}{k!} \left[\prod_{i=0}^{k-1}(N-i)(N-i+\alpha -1) \right] x^{n - k},
\end{equation}
which can be derived by applying Leibniz’s theorem for differentiation of a product to Rodrigues’ formula
\[
L_{n}^{(\alpha )}(x)=(-1)^n {x^{-\alpha + 1 }e^{x}}\frac{d^{n}}{ dx^{n}}\left(e^{-x}x^{n+\alpha-1}\right).
\]

Let $J_{\alpha,N} $ be the finite Jacobi matrix
\[
	J_{\alpha,N} = \begin{pmatrix}
		\alpha	& \sqrt{\alpha} \\
		\sqrt{\alpha} 	&\alpha + 2	& \sqrt{2}\sqrt{ \alpha + 1} 	\\
		&\ddots	&\ddots	&\ddots \\
		&	&\sqrt{N-1} \sqrt{\alpha + N - 2}&\alpha + 2(N-1)
	\end{pmatrix} .
\]
We express its dual as
\begin{align*}
	&J_{\alpha,N}^* = \begin{pmatrix}
	\alpha + 2(N-1)	&\sqrt{N-1} \sqrt{\alpha + N - 2}\\
	\sqrt{N-1} \sqrt{\alpha + N - 2}	&\alpha + 2(N-1)&\sqrt{N-2} \sqrt{\alpha + N - 3}	\\
		&\ddots	&\ddots	&\ddots \\
	&&	\sqrt{\alpha} 	&\alpha \\
	\end{pmatrix} \\
	&=	\begin{pmatrix}
		\sqrt{\alpha+N-1}	&\sqrt{N-1}	\\
			&\sqrt{\alpha+N-2}	&\sqrt{N-2}	\\
			&&\ddots	&\ddots	\\
			&&&\sqrt \alpha
	\end{pmatrix} \times \text{its transpose.}
\end{align*}

Here are some properties needed in this paper.
\begin{itemize}
	\item[(i)] The zeros  $z_{i,N}^{(L)}$ of $L_N^{(\alpha)}(x)$ are the eigenvalues of $J_{\alpha, N}$ or of $J_{\alpha, N}^*$.
	
	\item[(ii)] The spectral measure of $J^*_{\alpha, N}$ is given by 
\[	
	\mu^* = \frac{1}{N(\alpha + N - 1)}\sum_{i=1}^N z_{i, N}^{(L)} \delta_{z_{i, N}^{(L)}}.
\]
To see this, we use the relation $x (L_N^{(\alpha)})'(x) = N L_N^{(\alpha)}(x) + N(\alpha+N-1) L_{N-1}^{(\alpha)}(x)$ to deduce that 
\[
	w_i^* = \frac{L_{N-1}^{(\alpha)}(\lambda_i)}{(L^{(\alpha)}_N)'(\lambda_i)} = \frac{\lambda_i}{N(\alpha + N - 1)}, \quad (\lambda_i = z_{i, N}^{(L)}).
\]

\item[(iii)] We remark that
\begin{align*}
	&
	\begin{pmatrix}
		\sqrt{\alpha+N-1}	\\
		\sqrt{N-1}	&\sqrt{\alpha +N-2}	\\
		&\ddots	&\ddots	\\
		&&1	&\sqrt \alpha
	\end{pmatrix}\begin{pmatrix}
		\sqrt{\alpha+N-1}	&\sqrt{N-1}	\\
			&\sqrt{\alpha+N-2}	&\sqrt{N-2}	\\
			&&\ddots	&\ddots	\\
			&&&\sqrt \alpha
	\end{pmatrix}\\
	&= \begin{pmatrix}
	\alpha + N-1	&\sqrt{N-1} \sqrt{\alpha + N - 1}\\
	\sqrt{N-1} \sqrt{\alpha + N - 1}	&\alpha + 2N-3&\sqrt{N-2} \sqrt{\alpha + N - 2}	\\
		&\ddots	&\ddots	&\ddots \\
	&&	\sqrt{\alpha+1} 	&\alpha+1 \\
	\end{pmatrix} =: J_N^{(L)}\\
\end{align*}
also have the same eigenvalues with $J_{\alpha, N}^*$. This matrix was defined in \eqref{JNL} as the limit in the freezing limit of beta Laguerre ensembles. Its spectral measure is given by 
\[
	\frac{1}{N} \delta_{z_{i, N}^{(L)}}.
\]

\end{itemize}
\end{example}

\section{Finite free convolutions and the Fourier transform}

We next define the notion of ``Fourier transform" on the polynomials and study some of its properties. 
For a polynomial
$p$
of degree 
$N$, 
we can uniquely find a differential polynomial 
$\widehat{p}(D)$ (of degree $N$)
with 
\begin{eqnarray*}
\widehat{p}(D) x^N
=
p(x),
\end{eqnarray*}
where 
$D$
is the differential operator with respect to the variable 
$x$. 
We 
call 
$\widehat{p}$
the {\bf finite free Fourier transform (FFF)}
of 
$p$.
In fact, for 
the polynomial 
$
p(x)
=
\sum_{k=0}^N
(-1)^k
\alpha_k
x^{N-k}
$, 
its FFF
$\widehat{p}(D)$
is given explicitly by 
\begin{eqnarray*}
\widehat{p}(D)
=
\sum_{k=0}^N
\frac { (-1)^k }{k!}
\frac {\alpha_k}{
\binom{N}{k}
}
D^k.
\end{eqnarray*}
We write 
\[
	\hat p(D) \overset{N}{=} \hat q(D)
\]
if the coefficients of $D^n$ are the same for all $n \le N$.
Then if follows directly from the definition that $r = p \boxplus_N q$, if and only if 
\begin{equation}
\widehat{r
}
(D)
\overset{N}{=}
\widehat{p} (D)
\widehat{q} (D).
\label{FFCFFF}
\end{equation}

Let 
$p(x) = 
\prod_{i=1}^N
(x - a_i)$
be a monic polynomial with roots
$a_1, \dots, a_N$.
We define
$p_t (x)$
to be the one with roots 
$t a_1, \dots, t a_N$,
\begin{eqnarray*}
p_t (x)
=
\prod_{i}^N
(x - t a_i), 
\quad
t \in {\bf C}.
\end{eqnarray*}
Since 
the corresponding elementary symmetric functions satisfy 
\[
	e_k(ta_1, \dots, t a_{N}) = t^k e_k(a_1, \dots, a_N),\quad k = 0, \dots, N,
\] 
we obtain that  
\begin{equation}
\widehat{p_t}(D)
=
\widehat p (t D).
\label{pt}
\end{equation}
When $p = H_N$ is the $N$th Hermite polynomial, it follows from the explicit formula~\eqref{HeN} that
\[
	\widehat{p}(D) =  \sum_{n \le N/2} \frac{(-1)^n D^{2n}}{2^n n!} \overset{N}{=}  \sum_{n=0}^\infty \frac{(-1)^n D^{2n}}{2^n n!}  = e^{-\frac12 D^2}.
\]
And thus, with $H_{N, t}$ denoting the polynomial with roots being $t$ times the roots of $H_N$, its Fourier transform is given by 
\[
	\widehat{H_{N, t}} (D) =\widehat {p_t} \overset{N}{=} e^{-\frac12 t^2 D^2}.
\]
A direct consequence of this formula is that 
\[
	H_{N, t} \boxplus_N H_{N, s} = H_{N, \sqrt{t^2 + s^2}}.
\]

We aim to give an alternative proof of Lemma~\ref{lem:finite-convolution} by using the Fourier transform.

\begin{lemma}
For $a_1, \dots, a_N \in \R$, define $g_k(t)$ recursively by $g_0(t)=1, g_1(t)= e_1(a_1,\ldots,a_N),$
\begin{equation}\label{ODE-gka}
	g_k(t) = e_k(a_1, \dots, a_N) - \frac{(N-k+1)(N-k+2)}{2}  \int_0^t g_{k-2} (s) ds, \quad (k \ge 2).
\end{equation}
Let 
$y_1 (t) \le \cdots \le y_N (t)$
be defined by the relations
\begin{eqnarray*}
e_k (y_1(t), \dots, y_N(t)) = g_k(t), 
\quad
k = 1, \dots, N,
\end{eqnarray*}
and let 
$p_t$
be the monic polynomial which has 
$(y_1(t), \dots, y_N(t))$
as its roots. Then 
\[
	\widehat{p_t}(D) = \widehat{p_0}(D) e^{-\frac12 t D^2}.
\]
Consequently, $p_t$ is the finite free convolution of $p_0 = \prod_i(x - a_i)$ and $H_{N, \sqrt t}$. In other words, 
\[
	(y_1(t), \dots, y_N(t)) = (a_1, \dots, a_N) \boxplus_N \sqrt t (z_{1, N}^{(H)}, \dots, z_{N, N}^{(H)}).
\]
\end{lemma}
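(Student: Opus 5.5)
We translate the recursion \eqref{ODE-gka} into an evolution equation for the finite free Fourier transform of $p_t$ and solve it.

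By the explicit formula for the transform,
\[
\widehat{p_t}(D) = \sum_{k=0}^N \frac{(-1)^k}{k!}\frac{g_k(t)}{\binom Nk} D^k = \sum_{k=0}^N \frac{(-1)^k (N-k)!}{N!}\, g_k(t)\, D^k .
\]
Denote the coefficient of $D^k$ by $\phi_k(t) = (-1)^k\frac{(N-k)!}{N!} g_k(t)$. Then $\phi_0 \equiv 1$, and $\phi_1$ is constant because $g_1$ is.

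For $k\ge 2$ we differentiate \eqref{ODE-gka}:
\[
\phi_k'(t) = (-1)^k \frac{(N-k)!}{N!}\Bigl(-\frac{(N-k+1)(N-k+2)}{2}\Bigr) g_{k-2}(t) = -\frac12 (-1)^{k-2}\frac{(N-k+2)!}{N!} g_{k-2}(t) = -\frac12 \phi_{k-2}(t).
\]
This is the key cancellation: the factor $(N-k+1)(N-k+2)$ turns $(N-k)!$ into $(N-k+2)!$. So, coefficientwise and with all $D^n$, $n>N$, discarded,
\[
\partial_t \widehat{p_t}(D) \overset{N}{=} -\tfrac12 D^2\, \widehat{p_t}(D), \qquad \widehat{p_t}(D)\big|_{t=0} = \widehat{p_0}(D).
\]
Both $k=0$ and $k=1$ are consistent with this, since there is no $D^{-2}$ term.

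Next, $F_t(D) := \widehat{p_0}(D)e^{-\frac12 tD^2}$, truncated at order $N$, satisfies the same triangular linear system: its $D^k$ coefficient $\psi_k$ obeys $\psi_k' = -\frac12\psi_{k-2}$ with $\psi_k(0)=\phi_k(0)$. The triangular structure gives uniqueness by induction on $k$ (each $\phi_k$ is determined by $\phi_{k-2}$ and its initial value). Hence $\phi_k=\psi_k$ for all $k\le N$, which is the claim $\widehat{p_t}(D)\overset{N}{=}\widehat{p_0}(D)e^{-\frac12 tD^2}$. The statement's equality should be read in this $\overset{N}{=}$ sense, since the transform is defined only up to degree $N$.

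Finally, the paper showed $\widehat{H_{N,s}}(D)\overset{N}{=}e^{-\frac12 s^2D^2}$; taking $s=\sqrt t$ gives $e^{-\frac12 tD^2}$. By \eqref{FFCFFF}, $p_t = p_0\boxplus_N H_{N,\sqrt t}$. The roots of $H_{N,\sqrt t}$ are $\sqrt t\, z^{(H)}_{i,N}$, and finite free convolution of real-rooted polynomials is real-rooted, so ordering the roots ascending yields the tuple identity.

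The only delicate step is the coefficient bookkeeping, i.e.\ checking that the normalization $1/(k!\binom Nk)$ exactly absorbs the factor $(N-k+1)(N-k+2)/2$. Everything else is formal.
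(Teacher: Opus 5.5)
Your proposal is correct and follows the paper's proof essentially step for step. You write $\widehat{p_t}(D)$ with coefficients $(-1)^k\frac{(N-k)!}{N!}g_k(t)$, differentiate via the recursion to get $\partial_t\widehat{p_t}(D)\overset{N}{=}-\frac12 D^2\widehat{p_t}(D)$, identify the unique solution as $\widehat{p_0}(D)e^{-\frac12 tD^2}$, and conclude via $\widehat{H_{N,\sqrt t}}(D)\overset{N}{=}e^{-\frac12 tD^2}$ and \eqref{FFCFFF}; your coefficientwise triangular induction for uniqueness and your note that the identity holds in the $\overset{N}{=}$ sense only make explicit what the paper states as ``unique in the sense of formal power series.''
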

\begin{proof}
Let 
$p_t$
be the polynomial with roots $y_i(t)$. Then
\[
	p_t(x) = \prod_{i=1}^N (x - y_i(t)) = \sum_{k=0}^N (-1)^k g_k(t) x^{N-k},
\]
and thus
\begin{eqnarray*}
\widehat{p}_t (D)
&=&
\sum_{k=0}^N
\frac {(-1)^k}{k!}
\frac {g_k(t)}{\binom{N}{k}}
D^k.
\end{eqnarray*}
Differentiating both sides and using the relation 
\[
	\frac {d}{dt}
g_k(t)
=
- \frac{(N-k+1)(N-k+2)}{2} g_{k-2}(t),
\]
we deduce that 
\begin{align*}
	\frac {d}{dt}
\widehat{p}_t (D) &= \sum_{k=2}^N
\frac {(-1)^k}{k!}
\frac {g_k'(t)}{\binom{N}{k}}
D^k \\
	&=-\frac12 \sum_{k=2}^N\frac {(-1)^k}{k!}
\frac {(N-k+1)(N-k+2)g_{k-2}(t)}{\binom{N}{k}}
D^k\\
&=-\frac {1}2  \sum_{k=2}^N\frac {(-1)^{k-2}}{(k-2)!}
\frac {g_{k-2}(t)}{\binom{N}{k-2}}
D^k\\
&=-\frac{D^2}2 \sum_{l=0}^{N-2}\frac {(-1)^{l}}{l!}
\frac {g_{l}(t)}{\binom{N}{l}}
D^l, \quad (l = k-2)\\
&\overset{N}{=}- 
\frac {D^2}{2}
\widehat{p}_t (D).
\end{align*}

Now the solution to the ODE
\[
	\frac {d}{dt}
\widehat{p}_t (D) = -\frac {D^2}{2}
\widehat{p}_t (D),
\]
(in the sense of formal power series)
with initial condition $\widehat{p_0}(D)$ is unique and is given by 
\[
	\widehat{p}_t (D) = 	\widehat{p_0}(D) e^{-\frac12 t D^2},
\]
implying that $p_t$ is the finite free convolution of $p_0$ and $H_{N, \sqrt t}$. The proof is complete.
\end{proof}

Next we turn to the Laguerre case. It follows from the explicit formula~\eqref{closedform} for Laguerre polynomials that 
\[
	\widehat{L_N^{(\alpha)}}(D) =  \sum_{k=0}^N \frac{(-1)^k}{k!} \left[\prod_{i=0}^{k-1}(N-i+\alpha -1) \right] D^{ k}  \overset N{=} (1 - D)^{N + \alpha - 1}.
\]
Consequently, we immediately get a static version of Lemma~\ref{lem:Claim2},
\[
	L_N^{(\alpha_1)} \boxplus_N L_N^{(\alpha_2)} = L_N^{(N+\alpha_1+\alpha_2 -1)}.
\]
It also follows that 
\[
	\frac{d}{dt} \widehat{L_N^{(\alpha)}}(t D) = -(N + \alpha - 1) D \widehat{L_{N-1}^{(\alpha)}}(tD),
\]
which provides a relation between the $N$th and $(N-1)$st Laguerre polynomials.

\bibliographystyle{spmpsci}
\bibliography{refs}

\end{document}